\DeclareMathOperator{\Var}{Var}
\DeclareMathOperator{\trace}{tr}
\newcommand{\bE}{\ensuremath{\mathbb{E}}}
\newcommand{\bN}{\ensuremath{\mathbb{N}}}
\newcommand{\bP}{\ensuremath{\mathbb{P}}}
\newcommand{\bQ}{\ensuremath{\mathbb{Q}}}
\newcommand{\bR}{\ensuremath{\mathbb{R}}}
\newcommand{\bZ}{\ensuremath{\mathbb{Z}}}
\newcommand{\ind}{\ensuremath{\mathbbm{1}}}
\newcommand{\cA}{\ensuremath{\mathcal{A}}}
\newcommand{\cC}{\ensuremath{\mathcal{C}}}
\newcommand{\cL}{\ensuremath{\mathcal{L}}}
\newcommand{\cR}{\ensuremath{\mathcal{R}}}
\newcommand{\cS}{\ensuremath{\mathcal{S}}}
\newcommand{\cZ}{\ensuremath{\mathcal{Z}}}
\newcommand{\norm}[1]{\left\Vert \, #1 \, \right\Vert}
\newcommand{\ddx}[1][1]{\ifnum#1=1 \frac{d}{dx} \else \frac{d^{#1}}{dx^{#1}} \fi}
\newcommand{\ddy}[1][1]{\ifnum#1=1 \frac{d}{dy} \else \frac{d^{#1}}{dy^{#1}} \fi}
\newcommand{\ddt}[1][1]{\ifnum#1=1 \frac{d}{dt} \else \frac{d^{#1}}{dt^{#1}} \fi}
\newcommand{\super}[1]{{(#1)}}
\newcommand{\opi}{\overline{\pi}}
\newcommand{\bfy}{\mathbf{y}}
\newcommand{\bZd}{\mathbb{Z}^d}
\DeclareMathOperator{\Mult}{Mult}
\DeclareMathOperator{\Poi}{Poi}
\theoremstyle{plain}
\newtheorem{thm}{Theorem}[section]  
\newtheorem{prop}[thm]{Proposition}
\newtheorem{cor}[thm]{Corollary}
\newtheorem{lem}[thm]{Lemma}
\theoremstyle{definition}
\theoremstyle{remark}
\newtheorem{rem}{Remark}[section]
\numberwithin{equation}{section}
\title{Random walks on random walks:\\ non-perturbative results in high dimensions}
\author{ 
Stein Andreas Bethuelsen
 \footnote{University of Bergen, Department of Mathematics, Bergen, Norway 
 \newline
 Email: stein.bethuelsen@uib.de}  
 \quad
 Florian V\"ollering   
 \footnote{University of Leipzig, Germany
  \newline
 Email: florian.voellering@math.uni-leipzig.de}  
}
\begin{document}

\maketitle

\abstract{Consider the dynamic environment governed by a Poissonian field  of independent particles evolving as  simple random walks  on $\mathbb{Z}^d$. 
The \emph{random walk on random walks} model refers to a particular stochastic process on $\mathbb{Z}^d$ whose evolution at time $t$ depends on the number of such particles at its location. 
We derive classical limit theorems for this instrumental model of a random walk in a dynamic random environment, applicable in sufficiently high dimensions. 
More precisely, for $d \geq 5$, we prove a strong law of large numbers and large deviation estimates. 
Further, for $d\geq 9$, we obtain a functional central limit theorem under the annealed law. 
These results are non-perturbative in the sense that they hold for any positive density of the Poissonian field. 
Under the aforementioned assumptions on the dimension they therefore improve on previous work on the model.  
Moreover, they stand in contrast to the anomalous behaviour predicted in low dimensions.}


\tableofcontents


\section{Introduction}


Random walks evolving in a random environment constitute  fundamental models in the study of random motions in random media.
First introduced and analysed on $\mathbb{Z}$ in an i.i.d.\ environment by \citet{SolomonRWRE1975}, this model has since been extensively studied in a wealth of natural settings and shown to capture phenomena that fundamentally distinguishes its behaviour from that of a standard simple random walk. Particularly, it may be sub-ballistic, and it may have sub- or super-diffusive scaling limits \citet{KestenKozlovSpitzerRWRE1975,SinaiRWRE1982}.

In recent years, driven by applications in physics and biology, models of a random walk evolving in an itself dynamically evolving environment have gained much interest. For such models one typically distinguishes between cases where the dynamics is in a certain sense either \emph{fast mixing} or \emph{slow mixing}. 

In the fast mixing setting, the environment mixes at a sufficiently fast rate for the random walk to have  increments that are close to independent on large time scales. Consequently, its asymptotic behaviour resembles that of a simple random walk. Recently, in \citet{BethuelsenVolleringRWDRE2022}, we have made fundamental progress for models in this fast mixing setting. We refer to that paper for a more thorough discussion of the fast mixing setting and for further references.

  Conservative particle systems, e.g.\ the exclusion process or the zero-range process, which have, in a certain sense, slowly decaying space-time correlations, are dynamic environments that constitute the slowly mixing setting to which this paper is dedicated. Such models still serve as a major challenge and no general theory has yet been established that describes the limiting behaviour of random walks on top of such environments.  Moreover, in the low dimensional setting, supported by simulation studies and heuristic arguments, it has been argued that the random walk may be sub- or super-diffusive for such environments, see e.g.\ \citet{AvenaThomannRWDREsimulations2012,Huveneers2018}. Rigorous justifications of such \emph{slow-down phenomena} are however limited. We mention \citet{AvenaHollanderRedigRWDRELDP2009,AvenaJaraVolleringLDP2018} who considered such effects at the level of large deviations, and \citet{HuveneersSimenhausCLT2020,JaraMenezesCLT2020} who focused on the invariance principle. 

In the last decade there has been several advances in the slowly mixing setting  within certain perturbative regimes, e.g.\ \cite{RWRWhighdensity2019,RWRWlowdensity2020,HilarioHollanderSidoraviciusSantosTeixeiraRWRW2014,HilarioKiousTeixeira2020,HollanderKestenSidoraviciusRWHDRE2014,HuveneersSimenhausRWSEP2014,SalviSimenhaus2018}, where  the asymptotically behaviour is again shown to be qualitatively like a simple random walk. In these studies it is argued that, within the particular perturbative regime,  the random walk  has a positive linear speed and thus runs away from the slow dynamics of the environment. Consequently, its increments are again close to independent on large time scales, similar to the fast mixing setting.

The main purpose of this current article is to present new results for the so-called random walk on random walks (RWRW) model, one of the caricature models within the slowly mixing setting. In this model,   first studied in \citet{HollanderKestenSidoraviciusRWHDRE2014} and subsequently  in a series of papers \citet{HilarioHollanderSidoraviciusSantosTeixeiraRWRW2014,RWRWhighdensity2019,RWRWlowdensity2020,HilarioKiousTeixeira2020}, the environment is given by a Poissonian field of independent  particles  with density $\lambda>0$ evolving as standard random walks on $\bZ^d$. For instance, \cite{RWRWhighdensity2019} showed that, depending on the jump kernels of the random walk evolving on top of this environment and assuming $\lambda$ sufficiently large, the model has a diffusive scaling under the so-called annealed law. 
For the model on $\bZ$ the same was concluded in  \cite{RWRWlowdensity2020}, assuming $\lambda$ sufficiently small. Moreover, restricting the random walk to perform nearest neighbour jumps, still on $\bZ$, both these statements were obtained in  \cite{HilarioKiousTeixeira2020}, where they additionally proved the strong law of large numbers for all  values of $\lambda$ except possibly two. This latter result was very recently extended to hold for  all  values of $\lambda$ except possibly one, see \cite{KiousRoodriguez2024}.

We conclude in Theorem \ref{thm:main} the strong law of large numbers and large deviation estimates on the displacement of the random walk (on random walks) when $d\geq 5$. Moreover, in Theorem \ref{thm:main2}, we obtain the scaling to a Brownian motion when $d\geq 9$.  In fact, even for $d\geq 5$, we obtain a weak form of a functional central limit theorem. As alluded to in the abstract, these results extend the results of \cite{RWRWhighdensity2019} to all parameter values and finite range jump kernels of the random walk under the aforementioned assumptions on the dimension. 
Moreover,  the large deviation bounds show that linear deviations from the speed are exponentially costly and hence supersede those in \cite{RWRWhighdensity2019}, who derived stretched-exponential  bounds. 

 Our work also contrasts the previous mentioned works  in methodology. Therein the  assumption on the parameter $\lambda$ was a pivotal ingredient to initiate a coarse graining  (or renormalization) procedure  to ensure a minimal linear bound on  the drift. 
  This drift was in turn applied  in order to control certain regeneration times at which the random walk forgets its past observations. 
Unlike these approaches, for the proof of our main results, we first establish uniform bounds on the mixing behaviour for the so-called local environment process, which in turn enable us to apply the general theory that we developed in \cite{BethuelsenVolleringRWDRE2022}.  
To obtain the mixing bounds, we rely on a novel domination result, see Theorem \ref{thm:domination2}, 
combined with coupling arguments and heat kernel estimates for standard random walks. 
 

\subsubsection*{Outline of the paper}

In the following section we first provide a precise definition of the RWRW model and then state our main results on its asymptotic behaviour. 
In Section \ref{ssec:RWRWoutline} we outline the proofs of the main theorems, and the remaining sections are devoted to the details of these proofs.



\section{The model  and the main results}


The RWRW model can be seen as a particular random walk in a dynamic random environment. 
For constructing the environment, independently for each spatial coordinate $x \in \bZd$, let $N_{x,0} \sim Poi(\lambda)$. The allocated number $N_{x,0}$ corresponds to particles at the spatial location $x$ at time $t=0$. These particle then evolve independently as discrete-time standard random walks, which we assume are truly d-dimensional and of finite range.  The aforementioned \emph{Poissonian field of random walks} is the process constituting the evolution of all these particles.

More precisely, as the dynamic random environment, we consider the following process.  For $x \in \bZ^d$ for which $N_x>0$, letting $Y_{t}^{x,\super{i}}, i=1,\dots, N_{x,0}$, denote the position at time $t\geq 0$ of a particle started at $x\in \bZ^d$,  we consider the process $(\omega_t)$ on $\Omega=\{0,1\}^{\bZd}$, $d\geq 1$, where 
\begin{equation}\label{eq:environment}
\omega_t(x) = \left\{\begin{array}{cc}1 & \text{ if }N_{x,t} \geq 1;  \\0 & \text{otherwise}.\end{array}\right. \quad N_{x,t} \coloneqq \sum_{z\in \bZd }\ind_{N_{z,0}>0} \sum_{i=1}^{N_{z,0}} \ind_{Y_{t}^{z,\super{i}}=x}.
\end{equation}
 That is, we only trace whether there  is at least one particle present at a given space-time coordinate or not. We denote by $\bP$ the law of $(\omega_t)_{t\geq 0}$ on $(\Omega^{\bN_0},\mathcal{F})$ where $\mathcal{F}$ is the standard product $\sigma$-algebra generated by the cylinder events. 

 In addition to the environment $(\omega_t)$, for each $i\in \{0,1\}$, we assume given a certain prescribed probability distribution on $\bZd$, denoted by $\alpha(i,\cdot)$.  
 We assume that its range, denoted by $\cR_i := \{ z \in \bZd \colon \alpha(i,z)>0 \}$, is finite. 
 Then, given $\omega=(\omega_t)$, and denoting by $o\in \mathbb{Z}^{d}$ the origin, the evolution of $(X_t)$ is such that $P_{\omega}(X_0 = o) =1$ and, for $t\geq 0$,
\begin{align}
P_{\omega}(X_{t+1} = y+z \mid X_t = y) = \alpha(i, z),\quad  \text{ if }  \omega_t(y) = i, \quad i=0,1.
\end{align}
Thus, the position of $X_{t+1}$ depends on the  environment at its location in the previous time step, i.e.\ whether there is a particle at location $X_t$ or not. The process  $(X_t)$  is the  \emph{random walk on random walks}.  

\begin{rem}
There are slight variations appearing in the literature of how the RWRW model is defined. For instance, \cite{HollanderKestenSidoraviciusRWHDRE2014} studies an adaptation where both the environment and $(X_t)$ evolve in continuous-time.  In \cite{RWRWhighdensity2019} a more general model where the jump kernel of the random walk $(X_t)$ may depend on the exact number of particle at its location is considered. The model introduced above is the same as the one studied in \cite{HilarioHollanderSidoraviciusSantosTeixeiraRWRW2014,RWRWlowdensity2020}, albeit these papers restrict to the model on $\bZ$ and with nearest neighbour jumps. 
\end{rem}

The above law $P_{\omega}$  on $(\bZ^d )^{\bZ_{\geq0}}$ of $(X_t)$,  where we have conditioned on the entire environment, is called the \emph{quenched} law, and its corresponding $\sigma$-algebra is denoted by $\mathcal{G}$. 
We write $P_{\bP}$ for the joint law of $(\omega,X)$ on $\left(  \Omega^{\bN_0} \times (\bZ^d )^{\bN_{0}}\right)$, that is,
\begin{equation}
P_{\bP}(B \times A) = \int_{B}P_{\omega}(A) d\bP(\omega), \quad B \in \mathcal{F}, A \in \mathcal{G}.
\end{equation}
 The marginal law of $P_{\bP}$ on $(\bZ^d)^{\bN_{0}}$ is the \emph{annealed} law of $(X_t)$.


\begin{thm}\label{thm:main}
Consider the RWRW model  on $\bZd$ with $\lambda \in [0,\infty)$ and  $d\geq 5$. 

\begin{itemize}
\item[a)] The Strong law of large numbers (SLLN):

There exists a  $v \in \bR^d$ such that
\begin{equation}\label{eq:SLLN} \lim_{t \rightarrow \infty} \frac{X_t}{t} = v \quad P_{\bP}\text{-almost surely and in }L_1
\end{equation}

\item[b)]  Large deviation bounds (LDBs):

For every $\epsilon >0$ there exists constants $C,c>0$ such that 
\begin{equation}
P_{\bP} \left( \left| \frac{X_t}{t} - v \right| > \epsilon \right) \leq Ce^{-ct}, \quad \text{ for all } t \in \bN,
\end{equation}
with $v\in \bR^d$ as in \eqref{eq:SLLN}.
\end{itemize}
\end{thm}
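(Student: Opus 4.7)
The overall strategy is to reduce the theorem to an application of the abstract framework developed in \cite{BethuelsenVolleringRWDRE2022}, which derives SLLN and exponential large deviation bounds for random walks in dynamic random environments from sufficiently strong quantitative mixing estimates on the \emph{environment as seen from the walker} $\eta_t := \tau_{X_t}\omega_t$. Thus the main body of work is not the passage from mixing to SLLN/LDB (which is off-the-shelf once the inputs are in place), but rather verifying those mixing hypotheses for the RWRW model, uniformly in the density $\lambda>0$, under the sole assumption $d\geq 5$.

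The plan is to estimate, for local bounded observables $f,g$ on $\Omega$ and for two initial environments $\omega,\omega'$, the difference between the laws of $\eta_t$ started from $\omega$ and from $\omega'$, evaluated against $f$. The natural coupling is to run the same Poissonian field of random walks from the two initial conditions and then run $X_t$ in each; the discrepancy between $\eta_t$ and $\eta'_t$ at site $y$ is controlled by the event that some particle present in only one of the two initial configurations reaches $X_t+y$ by time $t$. Here the novel domination result, Theorem~\ref{thm:domination2}, enters: it allows us to dominate the law of $(\omega_t)$ started from an arbitrary (or conditioned) configuration by the stationary Poissonian law plus a controllable error, thereby making the coupling analysis quantitatively tractable without requiring perturbative assumptions on $\lambda$.

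The estimation of the coupling discrepancy then reduces to a heat kernel computation for independent simple random walks on $\bZ^d$: the probability that a particle initially at $z$ sits at $X_t+y$ at time $t$ is bounded by the Gaussian heat kernel $\sim t^{-d/2}$, and summing over the initial positions $z$ in the symmetric difference of $\omega$ and $\omega'$ together with the integration over $X_t$ yields decay rates on the space-time correlations of $(\eta_t)$. In dimension $d\geq 5$ the exponent $d/2$ is large enough for the requisite sums $\sum_{t\geq 1} t^{-(d-2)/2}$ to converge and, crucially, for the sums to remain finite uniformly in $\lambda$, which is what produces the non-perturbative character of the result. This mixing rate will be shown to satisfy the hypotheses of the SLLN and LDB theorems of \cite{BethuelsenVolleringRWDRE2022}, from which a) and b) follow.

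The hardest step, and the one I expect requires the most care, is obtaining the domination estimate in a form strong enough to feed into the coupling while remaining insensitive to $\lambda$: the environment $(\omega_t)$ is non-Markovian as a projection of the particle system and its conditional distribution given past observations of $X_\cdot$ is a non-trivial biased law. Controlling this bias, which is the role of Theorem~\ref{thm:domination2}, is what replaces the coarse-graining/regeneration machinery of earlier works and permits uniformity in $\lambda$. Once the coupling, the domination, and the heat kernel bounds are combined to yield the mixing input, the conclusion of both parts of Theorem~\ref{thm:main} is a direct invocation of the general theorems of our earlier paper.
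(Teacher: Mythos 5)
Your high-level architecture matches the paper's: reduce Theorem~\ref{thm:main} to a uniform mixing estimate for the environment observed by the walker, invoke the general limit theorems of \cite{BethuelsenVolleringRWDRE2022}, and obtain the mixing estimate from the domination result (Theorem~\ref{thm:domination2}) combined with heat kernel bounds. That is indeed the paper's route (Proposition~\ref{prop:mixing1} plus Theorem~\ref{thm:high}). However, your sketch of the mixing estimate has a genuine gap at precisely the step the paper spends most of its effort on. You propose to couple the two environments, bound the discrepancy by the event that a particle in the symmetric difference reaches $X_t+y$, and then ``integrate over $X_t$'' with the heat kernel. This does not work as stated, because the trajectory of $X$ is itself a function of the environment: the extra particles produced by the conditioning are exactly the ones the walker is likely to have interacted with, so the probability that one of them meets the \emph{random} path of $X$ cannot be bounded by a heat kernel sum over a fixed path. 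Moreover, as soon as the two coupled walks observe a single discrepancy their paths diverge, after which the difference between the two local environment processes is no longer controlled by collisions of stray particles with one trajectory. The paper resolves this by (i) exploiting the independence, guaranteed by Theorem~\ref{thm:domination2}, of the reference environment $\eta^{\gamma^c}$ (and hence of the reference walk $X^{(\gamma^c)}$) from the extra particles $Z^{(i)}$, so that one may legitimately condition on the path of $X^{(\gamma^c)}$ and apply the fixed-path heat kernel bound; and (ii) forcing the two walks to follow the same path over an initial window by conditioning on the presence of ``fresh'' particles ($Q_{[0,T']}\geq 1$), an ellipticity-type event whose cost is what degrades the final rate from the polynomial $n^{-d/2+2}$ you quote to the logarithmic $\log(t)^{-d/2-2}$ of Theorem~\ref{thm:high}. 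Your claimed summable polynomial rate for the full mixing coefficient is therefore not what the argument yields (though for Theorem~\ref{thm:main} only $\phi(t)\to 0$ is needed, so this does not affect the conclusion). Finally, you omit verification of the strong ellipticity hypothesis (Lemma~\ref{cor:ellipticity}), which is required for the abstract SLLN/LDB theorems to apply and is itself a nontrivial consequence of the domination theorem.
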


As already mentioned, Theorem \ref{thm:main} should particularly be compared with the results of  \cite{RWRWhighdensity2019}. 
For the RWRW model on $\bZ^d$, $d\geq 1$, assuming a certain drift assumption when the random walk observes sufficiently many particles, they concluded in  \cite[Theorem 1.1i)]{RWRWhighdensity2019} a SLLN as in Theorem \ref{thm:main}a) for all $\lambda$ sufficiently large. Moreover, as stated in \cite[Theorem 1.1iii)]{RWRWhighdensity2019}, they obtained LDBs as in  Theorem \ref{thm:main}b), however, with a stretched-exponential decay. We emphasise that Theorem \ref{thm:main} contains no restrictions on $\lambda$ nor the drift of the random walk and are, in contrast to the results obtained in \cite{RWRWhighdensity2019}, in this sense non-perturbative. Moreover, we obtain LDBs with an exponential decay that improve on the bounds of \cite[Theorem 1.1iii)]{RWRWhighdensity2019} when the model assumptions overlap.

 To prove that $(X_t)$ has a diffusive scaling limit we in addition require an assumption on the dimensionality of $(X_t)$. Particularly, we say that it is \emph{truly d-dimensional} if the laws $\alpha(i,\cdot)$, $i=0$ or $1$, are non-singular, in the sense that their corresponding covariance $d\times d$-matrices are invertible. 

\begin{thm}\label{thm:main2}
Consider the RWRW model  on $\bZd$ with $\lambda \in [0,\infty)$. 
\begin{itemize}
\item[a)] (weak) annealed Functional Central Limit Theorem (waFCLT):\\
If $d\geq 5$ and there is an $\alpha \in \bR^d$ such that the variance of $\alpha \cdot X_n$ diverges, then there is a slowly varying function  $h\colon [0,\infty) \rightarrow \bR$ such that
\begin{equation}
\left( \frac{\alpha \cdot X_{\lfloor nt\rfloor}- \alpha \cdot v\lfloor nt\rfloor}{\sqrt{nh(n)}} \right)_{t \in [0,1]} \overset{P_{\bP}}{\implies} (B_t)_{t \in [0,1]}
\end{equation}
where $\overset{P_{\bP}}{\implies}$ denotes convergence in distribution under the annealed law $P_{\bP}$ as $n\rightarrow \infty$ with respect to the Skorohod topology and  $(B_t)$  is a standard Brownian motion on $\bR$.\\  
If, moreover, $d\geq 9$, then there is a $\sigma \in (0,\infty)$ such that $h(n)=\sigma$. 
\item[b)] annealed Functional Central Limit Theorem (aFCLT):\\
If $d\geq 9$ and $(X_t)$ is truly d-dimensional, then 
\begin{equation}
\left( \frac{ X_{\lfloor nt\rfloor}- v\lfloor nt\rfloor}{\sqrt{n}} \right)_{t \in [0,1]} \overset{P_{\bP}}{\implies} (B_t^{\Sigma})_{t \in [0,1]}
\end{equation}
where $(B_t^{\Sigma})$ is a Brownian motion on $\bR^d$ with covariance matrix $\Sigma>0$.
\end{itemize}
\end{thm}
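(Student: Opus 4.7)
The plan is to deduce Theorem \ref{thm:main2} from the general functional central limit theorem for random walks in dynamic random environments established in \cite{BethuelsenVolleringRWDRE2022}. That theorem provides an FCLT under sufficient polynomial mixing of the environment as seen from the walker, so the whole task reduces to verifying such mixing bounds uniformly in $\lambda>0$ and then translating the abstract conclusion into the statements of parts a) and b).

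First, I would centre $X_t$ using the deterministic drift $v$ obtained from Theorem \ref{thm:main}. The centred increments $X_{t+1}-X_t-v$ are a bounded local functional of the environment process as seen from the walker, so the waFCLT and aFCLT become CLT statements for an additive functional of this process. To apply the framework of \cite{BethuelsenVolleringRWDRE2022} it suffices to bound the time correlations of this process at a sufficiently fast polynomial rate. The main obstacle here is that, conditional on the trajectory of $X$, the Poissonian field is no longer Markovian in a tractable way; the walker biases the local statistics of the field in a non-trivial manner. This is precisely the difficulty the domination result Theorem \ref{thm:domination2} is designed to bypass: it lets us compare correlations of $\omega$ observed along $X$ with correlations of $\omega$ under its original translation invariant law, which in turn, by the independence of the driving particles, reduce to hitting and meeting probabilities for random walks on $\bZd$. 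These decay like $t^{-d/2}$ by standard heat kernel estimates.

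With this $t^{-d/2}$ decay at hand, the dimensional thresholds of the theorem have a direct origin as summability conditions in the framework of \cite{BethuelsenVolleringRWDRE2022}. For the weak FCLT in part a), finite summability of correlations up to a slowly varying factor suffices, which holds for $d\geq 5$; the strengthening needed for an honest constant variance $\sigma\in(0,\infty)$, namely absolute summability combined with the corresponding higher-moment bounds from the framework, is obtained for $d\geq 9$. The hypothesis $\Var(\alpha\cdot X_n)\to\infty$ fixes the normalisation $\sqrt{nh(n)}$ and rules out the degenerate case, while Theorem \ref{thm:main} provides the required centring and the Skorohod-tightness input.

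Part b) is then obtained by the Cram\'er--Wold device: the waFCLT of part a) at $d\geq 9$ gives Gaussian limits for every linear functional $\alpha\cdot X_{\lfloor n\cdot\rfloor}$, with a limiting variance that is a quadratic form $\alpha^{\top}\Sigma\alpha$; tightness of the $\bR^d$-valued process follows component by component. The truly $d$-dimensional assumption guarantees that the quenched one-step covariance matrix of $X_{t+1}-X_t$ is uniformly positive definite in every direction, and after averaging over the environment and passing to the $n\to\infty$ limit this forces $\Sigma>0$, so the limit is a non-degenerate Brownian motion $(B^{\Sigma}_t)$ on $\bR^d$. The hard part throughout is establishing the $t^{-d/2}$ correlation decay of the environment seen from the walker \emph{uniformly} in $\lambda>0$; this non-perturbative step, powered by Theorem \ref{thm:domination2}, is what explains why the relevant conditions here are on the dimension rather than on the density of the field.
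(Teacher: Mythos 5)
Your overall strategy---reduce both parts to the mixing framework of \cite{BethuelsenVolleringRWDRE2022} (here Proposition \ref{prop:mixing2}), and obtain the required mixing via Theorem \ref{thm:domination2} plus heat-kernel estimates---is the same as the paper's. However, there is a genuine gap in the central quantitative claim. You assert that the correlations of the environment \emph{seen from the walker} decay polynomially, like $t^{-d/2}$, and you derive the thresholds $d\geq 5$ and $d\geq 9$ as summability conditions on such a polynomial rate. Neither point survives scrutiny. First, the polynomial rate (in fact $n^{-d/2+2}$, because one must sum hitting probabilities over all anchor times and all future times, not just bound a single meeting probability) is only available for the environment along a \emph{fixed, deterministic} path (Corollary \ref{prop:mixing_along_d_path}), or conditionally on the event $Q_{[0,n]}\geq 1$ (Lemma \ref{lem:trick1RWRW}). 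To pass to the uniform mixing coefficient $\phi(t)$ of the local environment process one must wait for a time window of length $T'=C\log T$ on which the auxiliary Poisson variables $Q$ are all positive, so that the walk's jumps decouple from the conditioning; this ellipticity step degrades the bound to the logarithmic rate $\phi(t)\lesssim(\log t)^{-d/2+2}$ of Theorem \ref{thm:high}. Your proposal does not address how to go from the fixed-path estimate to the random path of $(X_t)$, and this is precisely where the difficulty and the loss of rate occur.

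Second, and as a consequence, your explanation of the dimension thresholds is internally inconsistent: if $\phi(t)\leq Ct^{-d/2}$ (or even $t^{-d/2+2}$) were true, then $\sum_t\sqrt{\phi(2^t)}$ would converge geometrically for every $d\geq 5$, and the restriction to $d\geq 9$ in part b) would be superfluous. The actual mechanism is that with the logarithmic rate one has $\sqrt{\phi(2^t)}\lesssim t^{-d/4+1}$, which tends to $0$ iff $d\geq 5$ (giving the waFCLT via $\phi(t)\to0$ and Proposition \ref{prop:mixing2}i)) and is summable iff $d\geq 9$ (giving constant $h$ and part b) via Proposition \ref{prop:mixing2}ii)). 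A smaller issue: your argument for $\Sigma>0$ from uniform positive-definiteness of the one-step quenched covariance is insufficient on its own, since negative correlations between increments could in principle make the limiting variance degenerate; the paper instead establishes $\Var(\alpha\cdot X_n)\to\infty$ for every direction $\alpha\neq o$ by the anti-concentration argument of Lemma \ref{prop:varInfinite} and feeds this, together with strong ellipticity (Lemma \ref{cor:ellipticity}), into the abstract framework rather than arguing non-degeneracy by hand via Cram\'er--Wold.
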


Theorem \ref{thm:main2} should particularly be compared with \cite[Theorem 1.1ii)]{RWRWhighdensity2019}, where the aFCLT was obtained for the RWRW model on $\bZ^d$, $d\geq 1$, again assuming $\lambda$ sufficiently large and a drift assumption. Again our results are non-perturbative as we pose no such assumptions.

The proofs of Theorem \ref{thm:main} and Theorem \ref{thm:main2} are presented in the next section. As detailed in Subsection \ref{ssec:RWRWoutline}, they both build on the general approach of  \cite{BethuelsenVolleringRWDRE2022}, where the study of the asymptotic properties of $(X_t)$ are reduced to analysing a certain uniform mixing quantity of the so-called local environment process. For Theorem \ref{thm:main} and Theorem \ref{thm:main2}a) we only require that this mixing quantity is fulfilled, whereas for Theorem \ref{thm:main2}b) we require a certain quantitative bound on the  rate of decay. The appearance of the slowly varying function $h(\cdot)$ in Theorem \ref{thm:main2}a) stems from this approach, which rely on  classical limit theorems for $\phi$-mixing processes, and is presumably an artefact of the proof method. 

To obtain these uniform mixing bounds, we rely heavily on a decomposition of the underlying environment, presented in Subsection \ref{sec:SDfRW}, which enable us to control, in a uniform sense, the influence on the long term future behaviour of $(X_t)$ of the particles that random walk observes.  
For applying this, we require a sufficient decay of the heat kernel estimates for the simple random walks $(Y_t^x)$ constituting the dynamic environment, implying that they are sufficiently transient (e.g.\ that two independent simple random walks meet only a finitely many times when $d\geq 5$).  In fact, the basic assumptions we make for the evolution of  $(Y_t^x)$  is that they satisfy the properties  \eqref{lem:SRWestimates1} and  \eqref{lem:SRWestimates2} of Lemma \ref{lem:SRWestimates} below, which we believe are valid under fairly general assumptions on the jump transitions. 
  
  On the more technical side, we in addition need to assume that $(X_t)$ is finite range and, for Theorem \ref{thm:main2}, that $(X_t)$ is truly d-dimensional. Particularly, under these assumption we conclude that $(X_t)$ has a diverging variance when $d\geq 3$, see Proposition \ref{prop:varInfinite}, and further that $(X_t)$ is, in a certain sense forgetful (or elliptic), as we prove in Proposition \ref{cor:ellipticity}.  
  There seems to be no essential reason for restricting $(X_t)$ to only observing the environment at its location nor to allow for different jump kernels depending on the number of particles, however, these choices are convenient to our analysis at several stages. 

        Our restriction that $(X_t)$ evolves in discrete-time allow us to apply the methods from \cite{BethuelsenVolleringRWDRE2016,BethuelsenVolleringRWDRE2022}. This approach can also be extended to a dynamic environment  evolving in continuous-time following the same approach as outlined in \cite[Section 2]{BethuelsenVolleringRWDRE2016}. It is an interesting problem to extend our approach  to a setting where also the random walk $(X_t)$ has a  continuous-time evolution.       
      
       Presumably, a more refined analysis based on the methods in this paper can work in dimensions $d\geq3$. In fact, we are quite confident that Theorem \ref{thm:main} and Theorem \ref{thm:main2}a) can be extended to hold for $d\geq4$, and Theorem \ref{thm:main2}b) for $d\geq 5$. We will pursue this in a forthcoming follow-up article.  
        However,  \citet{CoxGriffeathPRW1984} have shown that,  when $d\leq2$, the large deviation tails of the occupation time at the origin of $(\omega_t)$ decay slower than exponential. This contrasts the LDBs of Theorem \ref{thm:main}b)
  and indicates that new methods are needed to treat the lower-dimensional cases.  To this end, we should also note that our large deviation bounds contrast those of \cite{AvenaHollanderRedigRWDRELDP2009}, who literarily proved slow-down effects at this level for a random walk model on the exclusion process on $\bZ$, but whose proof may be transferred to the RWRW model without much ado.


\section{Outline of the proofs of the main results} \label{ssec:RWRWoutline}

Following \cite{BethuelsenVolleringRWDRE2022}, the general strategy for the proofs of Theorem  \ref{thm:main} and  Theorem  \ref{thm:main2}  is to control certain mixing properties for the \emph{local environment process} 
$(\xi_t)= (\xi_t^1,\xi_t^2)$ on $\Xi:=\{0,1\} \times \cR$, where $\cR:=\cR_0\cup \cR_1$, and which is  given by
\begin{align}
&\xi_t^{(1)} = \omega_t(X_t),
 &\xi_t^{(2)} = X_{t+1}-X_{t}.
\end{align}
Thus, $\xi_t^{(1)}$ is the environment that $X_t$ needs to observe in order to determine the next jump, whereas $\xi_t^{(2)}$ is simply the jump taken. Moreover, note that
$X_t = \sum_{n=0}^{t-1} \xi_n^{(2)}$.

We next present a mixing quantity for the process $(\xi_t)$ introduced in  \cite{BethuelsenVolleringRWDRE2022}. For this, denote by $\Gamma_{-\infty}$  the set of all possible (i.e.\ deterministic) backwards random walk trajectories from $(o,0)$ of finite length that $(X_t)$ in principle can attain, i.e.\
 \begin{align}
  \label{eq Gamma k} &\Gamma_{-\infty} = \cup_{k \in \bN} \left\{(\gamma_{-k},\dots,o) \colon \gamma_i \in \bZ^d,  \gamma_i-\gamma_{i-1}\in \cR , -k\leq i<0\right\}.
 \end{align}
 Given $\gamma \in \Gamma_{-\infty}$ and $\sigma \in \{0,1\}^{|\gamma|}$, let
\begin{equation}\label{eq Akm}
A(\gamma, \sigma) \coloneqq \bigcap_{i=-{|\gamma|}}^{-1} \left\{ \theta_{\gamma_i,i}\omega= \sigma_{-i} \text{ on } \Delta \right\}
\end{equation}
denote a potential observation of $(\omega_t)$  along the path $\gamma$ and set
\begin{align}
\mathcal{A}(\gamma) \coloneqq \left\{ A(\gamma, \sigma) 
\colon \sigma \in \{0,1\}^{|\gamma|} \text{ and } \bP(A(\gamma,\sigma))>0 \right\}, 
\end{align} 
i.e.\  the set of all observations of $(\omega_t)$ along the path $\gamma$ that are  possible. Finally, let  
\begin{align}\label{eq:LEPmixing}
 \phi(t) \coloneqq  \sup_{\gamma,\gamma'\in \Gamma_{-\infty}} \sup_{\substack{A \in \cA(\gamma)\\ A'\in \cA(\gamma')}} \norm{P_{\bP(\cdot |A)}(\xi_{[t,\infty)}\in \cdot) - P_{\bP(\cdot |A)}(\xi_{[t,\infty)}\in \cdot) }_{TV},
 \end{align}
 where $TV$ stands for the total variation distance. 
 This mixing quantity is an adaptation of the notion of  $\phi$-mixing to non-stationary processes.  
 In \cite{BethuelsenVolleringRWDRE2022} we proved that the decay of $\phi(t)$ towards $0$ transfers to asymptotic properties of the random walk $(X_t)$. For this, we assumed that the random walk was elliptic, which holds for our model as stated next. 

\begin{lem}[Ellipticity]\label{cor:ellipticity}
Consider the RWRW model on $\bZ^d$, $d\geq3$,  with density $\lambda>0$. Then the model satisfies
\begin{equation}\label{cor:eq:ellipticity}
\inf_{\substack{\gamma \in \Gamma_{-\infty}\\ A \in \cA(\gamma)}} \min_{i=0,1} \bP_A(\omega(o) = i )>0. 
\end{equation}
\end{lem}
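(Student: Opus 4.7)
The approach is to exploit the Poissonian independence structure of the walker system via a decomposition by walker origin. First, extend the environment bilaterally in time so that $(\omega_t)_{t\in\bZ}$ is strictly stationary, making the conditioning on the backward-in-time event $A$ meaningful (the restriction $A\in\cA(\gamma)$ already ensures $\bP(A)>0$). Next, split the independent walkers into the \emph{tagged} ones whose bilateral trajectory satisfies $Y(0)=o$ (count $N^{(o)}\sim\Poi(\lambda)$) and the \emph{untagged} ones with $Y(0)\neq o$. This induces a decomposition $\omega=\omega^{(o)}\vee\omega^{(-o)}$ into independent random environments, together with the key identity
\begin{equation*}
\omega_0(o)=\ind\{N^{(o)}\geq 1\},
\end{equation*}
since a walker not tagged at $o$ at time $0$ cannot occupy $(o,0)$.

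With this identification, Bayes' rule reduces the ellipticity claim to showing that the conditional distribution of $N^{(o)}$ given $A$ is non-degenerate uniformly in $(\gamma,A)$. Concretely, it suffices to prove a Poisson-integrable upper bound and a positive lower bound (at $n=1$) for the ratios $\bP(A\mid N^{(o)}=n)/\bP(A\mid N^{(o)}=0)$. I would derive these by conditioning on $\omega^{(-o)}$ and analyzing the residual constraints $A$ imposes on the iid trajectories of the tagged walkers. The quantitative inputs are heat-kernel bounds of the type collected in Lemma \ref{lem:SRWestimates}: first, $\bP(Y^o_i=\gamma_i)\leq 1-c$ uniformly for $|i|\geq 1$, ensuring that each marginal $\omega^{(-o)}_i(\gamma_i)$ has Bernoulli parameter bounded away from $0$ and $1$; second, the transience sum $\sum_{|i|\geq 1}\sup_{x}\bP(Y^o_i=x)<\infty$ in $d\geq 3$, which controls the probability that a single tagged walker interferes with the finite set of space-time observation points along $\gamma$.

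The main obstacle lies in realizations of $\omega^{(-o)}$ on which some constraints of the form $\omega_i(\gamma_i)=1$ are \emph{not} met by the untagged walkers, so that $\bP(A\mid N^{(o)}=0)$ can vanish while $\bP(A\mid N^{(o)}=n)$ is positive, preventing a naive ratio bound. The way around is to restrict the outer expectation to the subset of $\omega^{(-o)}$-configurations on which every $\sigma_{-i}=1$ constraint is already satisfied by untagged walkers alone; by the marginal estimate above together with a Poisson moment argument along $\gamma$, this subset has probability bounded below uniformly in $(\gamma,A)$. On this subset the role of the tagged walkers reduces to avoiding the fixed finite set of ``$\sigma_{-i}=0$'' space-time points, and the required uniform ratio bounds then follow geometrically in $n$ from the transience sum. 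Both directions of the ellipticity inequality are then obtained by combining the Bayes formula with these estimates.
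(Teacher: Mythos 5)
Your strategy---Bayes' rule on the occupation number $N^{(o)}$ of the origin at time $0$---is genuinely different from the paper's, which instead deduces ellipticity from the decomposition of Theorem \ref{thm:domination2} (Lemmas \ref{lem:RWRWellip1} and \ref{lem:RWRWellip2}). One half of your plan does go through cleanly, and in fact more simply than you describe: since a tagged walker avoids all of $\{(\gamma_i,i)\}_{i=-|\gamma|}^{-1}$ with probability at least $\delta$ by \eqref{lem:SRWestimates1}, and on that event it does not affect whether $A$ holds, one gets $\bP(A\mid N^{(o)}=1)\geq\delta\,\bP(A\mid N^{(o)}=0)$, hence $\bP(N^{(o)}\geq1\mid A)\geq\delta\lambda\,\bP(N^{(o)}=0\mid A)$ and therefore $\bP(\omega_0(o)=1\mid A)\geq\delta\lambda/(1+\delta\lambda)$, with no further input.

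The gap is in the direction $\bP(N^{(o)}=0\mid A)\geq\epsilon$, which needs the Poisson-summable \emph{upper} bound on $\bP(A\mid N^{(o)}=n)/\bP(A\mid N^{(o)}=0)$. First, your intermediate claim that the set of untagged configurations already satisfying every $\sigma_{-i}=1$ constraint has probability bounded below uniformly in $(\gamma,A)$ is false: the number of such constraints can be of order $|\gamma|$, each is met with probability bounded away from $1$, and the probability of meeting all of them tends to $0$ as their number grows. Second, and more fundamentally, restricting the outer expectation to such a good set can only yield \emph{lower} bounds on $\bP(A\mid N^{(o)}=n)$; the upper bound must account for precisely the excluded configurations, in which the tagged walkers are essential to fulfil some subset $S$ of the ``$=1$'' constraints. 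For those terms the ratio against $\bP(A\mid N^{(o)}=0)$ is the reciprocal of the conditional probability that the untagged field occupies the points of $S$ given another observation event along $\gamma$ --- a multi-point version of the very ellipticity statement being proved, so the argument as written is circular. Controlling how conditioning on occupation constraints redistributes particles is exactly what the paper's Theorem \ref{thm:domination2} (via the iterative coarsening of Section \ref{sec:cpp}) provides, together with the heat-kernel bound $C|z_i|^{-d/2}$ for each anchored particle to reach $(o,0)$ and the domination of the anchor counts by independent $\Poi(\lambda)+n$ variables; your plan needs a substitute for this input to close the upper bound.
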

 Following the convention of \cite{BethuelsenVolleringRWDRE2022}, models satisfying the property \eqref{cor:eq:ellipticity}  are \emph{strongly elliptic}. 
 We postpone the proof of this property to the next section and here first conclude an immediate consequence of this combined with \cite[Proposition 2.1ii)]{BethuelsenVolleringRWDRE2022}.

\begin{prop}
\label{prop:mixing1}
 Consider the RWRW model on $\bZ^d$, $d\geq 1$, with $\lambda \in [0, \infty)$. 
  If $\lim_{t \rightarrow \infty} \phi(t) = 0$, then the SLLN and the LDBs of Theorem \ref{thm:main} holds.
\end{prop}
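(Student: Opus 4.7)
The plan is to invoke \cite[Proposition 2.1ii)]{BethuelsenVolleringRWDRE2022} directly, as the paragraph preceding the statement already hints. That general result asserts that for a random walk in a dynamic random environment which is strongly elliptic in the sense of \eqref{cor:eq:ellipticity}, the decay $\phi(t)\to 0$ of the mixing coefficient defined at the level of the local environment process is enough to yield both the SLLN and exponentially decaying large deviation bounds for $X_t/t$. So the proof reduces to a verification step.

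The first step is to check that the RWRW model, together with its local environment process $(\xi_t)=(\xi_t^{(1)},\xi_t^{(2)})$, fits into the abstract framework of \cite{BethuelsenVolleringRWDRE2022}. Concretely, I would record that $\xi_t^{(2)}=X_{t+1}-X_t$ takes values in the finite range set $\cR$ and hence the increments are uniformly bounded, that $X_t=\sum_{n=0}^{t-1}\xi_n^{(2)}$ expresses the walker's position as a partial sum of this bounded sequence, and that the mixing quantity \eqref{eq:LEPmixing} is defined identically to its counterpart in \cite{BethuelsenVolleringRWDRE2022} (being the same authors' notation carried over verbatim). This is definitional bookkeeping rather than actual mathematical content.

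The second step is to supply the two analytic hypotheses demanded by the general proposition. Strong ellipticity in the form \eqref{cor:eq:ellipticity} is exactly the content of Lemma \ref{cor:ellipticity}, and the mixing decay $\phi(t)\to 0$ is precisely our standing assumption. With these in hand, \cite[Proposition 2.1ii)]{BethuelsenVolleringRWDRE2022} applies and delivers the SLLN and LDBs stated in Theorem \ref{thm:main}(a) and (b).

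I do not anticipate any genuine obstacle in this argument: the proof is essentially a citation, and the only possible subtlety is the matching of notation between \eqref{eq:LEPmixing} and the mixing coefficient of \cite{BethuelsenVolleringRWDRE2022}. The real work of the paper is deferred elsewhere, namely to the proof of Lemma \ref{cor:ellipticity} in the next section and, more substantially, to the later sections where the quantitative hypothesis $\phi(t)\to 0$ (and in fact a rate of decay) is actually established for $d\geq 5$.
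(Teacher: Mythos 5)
Your proof is correct and takes exactly the paper's route: the paper likewise presents Proposition \ref{prop:mixing1} as an immediate consequence of strong ellipticity (Lemma \ref{cor:ellipticity}) combined with \cite[Proposition 2.1ii)]{BethuelsenVolleringRWDRE2022}, giving no further argument. (The cosmetic mismatch that the proposition is stated for $d\geq 1$, $\lambda\in[0,\infty)$ while the ellipticity lemma assumes $d\geq 3$, $\lambda>0$ is already present in the paper and immaterial, since the hypothesis $\phi(t)\to 0$ is only ever verified for $d\geq 5$.)
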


Similarly, the aFCLTs of Theorem \ref{thm:main2} can be concluded from sufficient decay of $\phi(t)$. However, for this we need an additional properties of $(X_t)$, which we formulate in the following  lemma, and whose proof is presented at the end of this section.

\begin{lem}[Diverging variance] \label{prop:varInfinite}
Assume that $(X_t)$ is truly d-dimensional. Then, whenever $d\geq 3$, it holds that $\Var(\alpha \cdot X_t) \rightarrow \infty$ as $t\rightarrow \infty$ for any $\alpha \in \bR^d \setminus \{o\}$. 
\end{lem}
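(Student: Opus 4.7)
The plan is to derive a linear lower bound $\Var(\alpha\cdot X_t)\ge c_\alpha t\to\infty$ via a Doob-type martingale decomposition. Set $\cG_n:=\sigma((\omega_s)_{s\le n},(X_s)_{s\le n})$ and write $\alpha\cdot X_t=M_t+A_t$ with
\begin{equation*}
M_t:=\sum_{n<t}\alpha\cdot\bigl(\xi_n^{(2)}-\mu(\xi_n^{(1)})\bigr), \qquad A_t:=\sum_{n<t}\alpha\cdot\mu(\xi_n^{(1)}),
\end{equation*}
where $\mu(i):=\sum_{z\in\cR_i}z\,\alpha(i,z)$. Since $\xi_n^{(1)}=\omega_n(X_n)$ is $\cG_n$-measurable and the conditional law of $\xi_n^{(2)}$ given $\cG_n$ is $\alpha(\xi_n^{(1)},\cdot)$, the process $(M_n)$ is a square-integrable $\cG_n$-martingale, and orthogonality of its increments yields
\begin{equation*}
\erw{M_t^2}=\sum_{n<t}\erw{\alpha^{T}\Sigma_{\xi_n^{(1)}}\alpha},
\end{equation*}
where $\Sigma_i$ is the covariance matrix of $\alpha(i,\cdot)$.

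Next I would combine the truly $d$-dimensional hypothesis, which gives $\alpha^{T}\Sigma_i\alpha>0$ for every $\alpha\in\bR^d\setminus\{o\}$ and at least one $i\in\{0,1\}$, with the strong ellipticity from Lemma \ref{cor:ellipticity}, which yields $\bP(\xi_n^{(1)}=i)\ge c_0>0$ uniformly in $n$ and $i$. Together these give
\begin{equation*}
\erw{\alpha^{T}\Sigma_{\xi_n^{(1)}}\alpha}\ge c_0\max\bigl(\alpha^{T}\Sigma_0\alpha,\alpha^{T}\Sigma_1\alpha\bigr)=:c_\alpha>0,
\end{equation*}
for every $n$, whence $\erw{M_t^2}\ge c_\alpha t\to\infty$.

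To pass from this to $\Var(\alpha\cdot X_t)=\erw{M_t^2}+2\Cov(M_t,A_t)+\Var(A_t)$, I would exploit the explicit form
\begin{equation*}
A_t=t\alpha\cdot\mu(0)+\bigl(\alpha\cdot(\mu(1)-\mu(0))\bigr)T_t,\qquad T_t:=\#\{n<t:\xi_n^{(1)}=1\}.
\end{equation*}
When $\alpha\perp(\mu(1)-\mu(0))$, $A_t$ is deterministic and the claim follows immediately. Otherwise I would use the Cauchy--Schwarz bound $\Var(\alpha\cdot X_t)\ge(\sqrt{\erw{M_t^2}}-\sqrt{\Var(A_t)})^2$ together with the martingale property to reduce $\erw{M_tA_t}$ to the off-diagonal sum $\sum_{n<m<t}\erw{\alpha\cdot(\xi_n^{(2)}-\mu(\xi_n^{(1)}))\,\alpha\cdot\mu(\xi_m^{(1)})}$, and bound each such term by coupling the environment against an independent copy re-initiated at time $n$, exploiting transience of the underlying simple random walks, which holds for $d\ge 3$.

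The hard part will be the regime $\alpha\cdot(\mu(1)-\mu(0))\ne 0$: here both $\erw{M_t^2}$ and $\Var(A_t)$ may grow linearly in $t$, and one must prevent their cross-correlation from canceling the quadratic-variation growth of the martingale part. The dimensional hypothesis $d\ge 3$ enters precisely at this point, guaranteeing sufficient decorrelation between the jump fluctuation $\xi_n^{(2)}-\mu(\xi_n^{(1)})$ at time $n$ and the local occupation $\xi_m^{(1)}$ at times $m\gg n$, via the transience of the Poissonian particle field.
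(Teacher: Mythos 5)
Your martingale decomposition and the identity $\bE[M_t^2]=\sum_{n<t}\bE[\alpha^{T}\Sigma_{\xi_n^{(1)}}\alpha]\ge c_\alpha t$ are correct (in fact, since ``truly $d$-dimensional'' means \emph{both} $\Sigma_0$ and $\Sigma_1$ are invertible, you get this with $c_\alpha=\min_i\alpha^{T}\Sigma_i\alpha>0$ and no ellipticity input), and the case $\alpha\perp(\mu(1)-\mu(0))$ is complete. The gap is the complementary case, which you correctly flag as ``the hard part'' but do not close, and which I do not believe can be closed along the lines you sketch. When $\alpha\cdot(\mu(1)-\mu(0))\neq0$, your Cauchy--Schwarz bound $(\sqrt{\bE[M_t^2]}-\sqrt{\Var(A_t)})^2$ is vacuous precisely in the relevant regime where both terms are of order $t$, and the off-diagonal covariances $\bE[\alpha\cdot(\xi_n^{(2)}-\mu(\xi_n^{(1)}))\,\ind_{\xi_m^{(1)}=1}]$ are exactly a mixing statement for the local environment process --- the hard content of the whole paper. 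The quantitative bound the paper eventually proves (Theorem \ref{thm:high}) decays only like $\log(t)^{-d/2-2}$ and only for $d\ge5$, nowhere near summable in $m-n$; and even a summable covariance decay would only give $|\Cov(M_t,A_t)|\le Ct$ with an uncontrolled constant $C$, which still does not rule out cancellation against $\bE[M_t^2]=\Theta(t)$. The argument is also circular in the paper's logical order: this lemma is an input to the FCLT machinery, not an output of the mixing estimates.

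The paper sidesteps the correlation issue entirely with an anti-concentration argument in the spirit of Peres--Popov--Sousi. One constructs the walk from two pre-sampled i.i.d.\ streams $(W_j^{(0)})$, $(W_j^{(1)})$, so that $X_n= W^{(0)}_{[1,i_0]}+W^{(1)}_{[1,i_1]}$ for a \emph{random} split $i_0+i_1=n$; for each \emph{fixed} split, Esseen's concentration-function inequality gives $\sup_y\bP\big(W^{(0)}_{[1,i_0]}+W^{(1)}_{[1,i_1]}\in B(y,R)\big)\le C R^{d} n^{-d/2}$, and a union bound over the $n+1$ possible splits costs a factor $n$, yielding anti-concentration of $X_n$ around $\bE(X_n)$ at scale $R=n^{1/(3d)}$ as soon as $1-d/2<0$, i.e.\ $d\ge3$. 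That is where $d\ge3$ actually enters --- not through transience of the environment. If you wish to rescue a conditioning-based argument, note that the natural alternative $\Var(\alpha\cdot X_t)\ge\bE[\Var(\alpha\cdot X_t\mid \text{observed types})]$ also fails to reduce to a sum of one-step variances, because conditioning on the sequence of observed types biases the jump increments.
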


Note that this property is the reason why in  Theorem \ref{thm:main2} we make the assumption that both jump kernels of $(X_t)$ are truly d-dimensional.

Equipped with Lemma \ref{prop:varInfinite} and  \cite[Proposition 2.1]{BethuelsenVolleringRWDRE2022}, we immediately conclude the following.

\begin{prop}
\label{prop:mixing2}
Consider the RWRW model on $\bZ^d$ with $d\geq 1$ and  $\lambda >0$.
\begin{enumerate}
\item[i)]  If for some $\alpha \in \bR^d \setminus \{o\}$ the variance of $\alpha \cdot X_t$ diverges,  then $(\alpha \cdot X_t)$ satisfies the waFCLT of Theorem \ref{thm:main2}a) whenever $\lim_{t \rightarrow \infty} \phi(t) = 0$. Moreover, the statement holds with $h(n)=\sigma$ for some $\sigma \in (0,\infty)$ whenever $\sum_{t \geq 1} \sqrt{\phi(2^t)}<\infty$.
\item[ii)] If $\sum_{t \geq 1} \sqrt{\phi(2^t)}<\infty$, $(X_t)$ is truly d-dimensional and $d\geq 3$, then $(X_t)$  satisfies the aFCLT of Theorem \ref{thm:main2}b).
\end{enumerate}
\end{prop}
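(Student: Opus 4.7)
The plan is that both statements follow essentially by verifying the hypotheses of \cite[Proposition 2.1]{BethuelsenVolleringRWDRE2022} and then invoking it. The ellipticity hypothesis needed to apply that result is provided by Lemma \ref{cor:ellipticity} (with $\lambda>0$), which holds under our standing assumptions.

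For part i), I would proceed as follows. The assumption that $\Var(\alpha \cdot X_t) \to \infty$ is given, and the random walk is strongly elliptic by Lemma \ref{cor:ellipticity}. Hence \cite[Proposition 2.1]{BethuelsenVolleringRWDRE2022} applies to the real-valued process $(\alpha \cdot X_t)$, and yields directly the waFCLT with the slowly varying function $h$ under the qualitative mixing hypothesis $\phi(t)\to 0$. Under the stronger summability $\sum_{t\geq 1}\sqrt{\phi(2^t)}<\infty$, the same proposition upgrades the slowly varying $h$ to a genuine positive constant $\sigma$, since in the cited result summability is exactly the assumption which controls the covariance series and replaces the scaling $nh(n)$ by $\sigma^2 n$.

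For part ii), I would use Lemma \ref{prop:varInfinite}: since $(X_t)$ is truly $d$-dimensional and $d\geq 3$, the projected process $(\alpha \cdot X_t)$ has $\Var(\alpha \cdot X_t)\to\infty$ for every $\alpha \in \bR^d\setminus\{o\}$. Combined with the summability hypothesis, part i) applied direction by direction yields a one-dimensional waFCLT with a constant $\sigma(\alpha)^2 \in (0,\infty)$ for each $\alpha$. The Cram\'er--Wold device then identifies the finite-dimensional distributions of any scaling limit of $(X_{\lfloor nt\rfloor} - v\lfloor nt \rfloor)/\sqrt{n}$ as those of a centred Gaussian process with covariance structure determined by a matrix $\Sigma$ satisfying $\alpha^{T}\Sigma\alpha = \sigma(\alpha)^2 > 0$ for all $\alpha \neq o$; hence $\Sigma > 0$. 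Tightness in the Skorohod topology on $D([0,1];\bR^d)$ follows from componentwise tightness, which in turn is an output of the one-dimensional FCLT in each coordinate direction. Putting the tightness and the Cram\'er--Wold identification of finite-dimensional limits together yields the aFCLT of Theorem \ref{thm:main2}b).

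The only delicate point I expect is the passage from one-dimensional to multidimensional functional convergence: one must be careful that the one-dimensional statement of part i) comes with constant $h=\sigma(\alpha)$ (not merely slowly varying) in every direction, so that the finite-dimensional Gaussian limits glue together into a single covariance matrix. This is where the summability hypothesis $\sum_{t\geq 1}\sqrt{\phi(2^t)}<\infty$ is used crucially, and it is also why the weak version ii)-without-$h$-constant is not promoted to a multidimensional statement in the theorem.
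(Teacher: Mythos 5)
Your treatment of part i) coincides with the paper's: the paper proves Proposition \ref{prop:mixing2} simply by combining strong ellipticity (Lemma \ref{cor:ellipticity}), the diverging-variance input, and a direct citation of \cite[Proposition 2.1]{BethuelsenVolleringRWDRE2022}, whose hypotheses are exactly $\phi(t)\to 0$ (resp.\ $\sum_{t\geq 1}\sqrt{\phi(2^t)}<\infty$). For part ii) the paper again just invokes the multidimensional statement of that same external proposition together with Lemma \ref{prop:varInfinite}; it does not rederive the $\bR^d$-valued aFCLT from the one-dimensional one.

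Your alternative route for part ii) --- bootstrapping the multidimensional aFCLT from the directional results via Cram\'er--Wold --- has a genuine gap at the identification of the finite-dimensional distributions. Knowing, for each fixed $\alpha$, that the process $t\mapsto \alpha\cdot(X_{\lfloor nt\rfloor}-v\lfloor nt\rfloor)/\sqrt{n}$ converges to $\sigma(\alpha)B_t$ pins down the joint law across times \emph{within a single direction}, and (via polarization) the quadratic form $\alpha\mapsto\sigma(\alpha)^2=\alpha^{T}\Sigma\alpha$ at a fixed time. But the Cram\'er--Wold device for the vector $(X_{\lfloor nt_1\rfloor},\dots,X_{\lfloor nt_k\rfloor})/\sqrt{n}$ in $\bR^{kd}$ requires convergence of $\sum_j \beta_j\cdot X_{\lfloor nt_j\rfloor}/\sqrt{n}$ for \emph{arbitrary} $\beta_1,\dots,\beta_k\in\bR^d$ pointing in different directions at different times, and this is not a functional of any single one-dimensional process $(\alpha\cdot X_t)$. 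In particular the cross-covariances $\Cov(\beta_1\cdot B_{t_1}^{\Sigma},\beta_2\cdot B_{t_2}^{\Sigma})$ for $\beta_1\neq\beta_2$ and $t_1\neq t_2$ are not determined by your directional limits alone; to close this you would need an extra ingredient such as asymptotic independence of increments over disjoint time windows (which does follow from the $\phi$-mixing, but is an additional argument, not a consequence of part i)). Since the cited \cite[Proposition 2.1]{BethuelsenVolleringRWDRE2022} already delivers the $\bR^d$-valued conclusion, the cleaner fix is simply to invoke it directly, as the paper does; otherwise you must supply the increment-independence step explicitly. The tightness part of your argument (componentwise tightness plus continuity of the limit) is fine.
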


Evidently from Propositions  \ref{prop:mixing1} and   \ref{prop:mixing2}, in order to conclude our main results, it suffices to control the mixing properties of the local environment process.  For this, we prove in Section \ref{sec:uniformMixing} the following key estimate.

\begin{thm}\label{thm:high}
Let $d\geq 5$ and $\lambda>0$. Then there exists $C \in (0,\infty)$ such that $\phi(t) \leq C \log(t)^{-\frac{d}{2}-2}$ for all $t\geq 1$.
\end{thm}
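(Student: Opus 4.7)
My plan is to bound $\phi(t)$ by constructing an explicit coupling of the two conditional environment laws $\bP(\cdot\mid A)$ and $\bP(\cdot\mid A')$ on a common probability space, running the walker on both versions of the environment, and estimating the probability that the two resulting copies of $(\xi_s)_{s\geq t}$ ever disagree. The key structural input is the domination theorem \ref{thm:domination2}. Its intended consequence is that, for any $\gamma\in\Gamma_{-\infty}$ and any observation $\sigma$, the conditional environment under $\bP(\cdot\mid A(\gamma,\sigma))$ can be realised as the unconditional Poissonian field of random walks augmented by a finite collection of \emph{tagged} walks, one planted at each observation point $(\gamma_{-i},-i)$ with $\sigma_{-i}=1$, each evolving thereafter as an independent standard random walk. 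The constraints $\sigma_{-i}=0$ are handled symmetrically by locally suppressing particles, with a spatially localised and controllable effect.

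Coupling the two Poissonian backgrounds to be identical, the conditional environments $\omega^A$ and $\omega^{A'}$ then differ only through their respective tagged walks. I would drive both copies of the walker by the same jump randomness so that $X^A_s=X^{A'}_s$ for as long as $\omega^A_u(X_u)=\omega^{A'}_u(X_u)$ for every $u<s$. The coupling of $(\xi_s)_{s\geq t}$ fails at some $s\geq t$ only if some tagged particle from either side occupies the walker's location at such a time, so the required TV-bound is reduced to a first-moment estimate on the number of collisions between the space--time trajectory $(s,X_s)_{s\geq t}$ and the finitely many tagged walks.

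A tagged walk planted at $(\gamma_{-i},-i)$ sits at any prescribed site at time $s$ with probability of order $(s+i)^{-d/2}$ by \eqref{lem:SRWestimates1}; summing over $s\geq t$ gives a per-particle contribution of order $(t+i)^{1-d/2}$, and the transience bound \eqref{lem:SRWestimates2} makes the further summation over the observation index $i$ convergent in $d\geq 5$. To convert this polynomial-in-$t$ control into the advertised $\log(t)^{-d/2-2}$ rate uniformly over $\gamma$ and $\gamma'$, I would introduce a truncation scale $r(t)$ and treat observations at times $-i$ with $i\leq r(t)$ via the direct per-particle estimate above, while absorbing the contribution of older observations through a dyadic iteration of the transience bound in which each scale loses a constant factor. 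Optimising the cutoff is then expected to produce the polylogarithmic exponent.

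The main obstacle is twofold: uniformity over paths $\gamma,\gamma'$ of arbitrary (unbounded) length, and the fact that under the coupling $X_s$ is \emph{not} independent of the tagged particles, because the bit $\omega_u(X_u)$ read off by the walker feeds back into its own future position and hence into later collision probabilities. This is precisely where the strong ellipticity of Lemma \ref{cor:ellipticity} enters, giving a uniform lower bound on conditional environment values that prevents even worst-case past observations from concentrating the walker near the tagged particles. The finite range of the jump kernel of $(X_t)$ further confines $X_s$ to a ball of linear radius, so that the pointwise-in-$y$ heat-kernel bound transforms into a genuinely walker-uniform collision estimate. I expect the interplay between the truncation scale $r(t)$, the ellipticity-driven decoupling, and the summation of transience bounds to be the core technical work of Section~\ref{sec:uniformMixing}, with the specific exponent $-d/2-2$ emerging from the optimisation of the truncation.
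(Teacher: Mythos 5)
Your structural inputs are the right ones (Theorem \ref{thm:domination2}, the common-randomness coupling of the two walkers, the heat-kernel and transience bounds), and you correctly flag the feedback of $\omega_u(X_u)$ into the walker's future position as the main obstacle. But your resolution of that obstacle does not work, and this is exactly where the paper's argument does something different. Under your coupling, the event that the two copies of $(\xi_s)_{s\ge t}$ disagree is \emph{not} contained in the event that a tagged particle meets the walker at some time $s\ge t$: the two walkers must also still be together at time $t$, and they separate during $[0,t]$ with probability bounded away from $0$ uniformly in $t$ (already at time $0$, a tagged particle anchored at $(\gamma_{-1},-1)$ occupies the origin with probability of order one, so $\omega^A_0(o)\neq\omega^{A'}_0(o)$ with positive probability and the paths diverge immediately). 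Once separated there is no recoupling mechanism, so the argument as written yields only $\phi(t)\le \mathrm{const}$, not a decaying bound. Strong ellipticity cannot repair this: it gives a uniform lower bound on conditional occupation probabilities, but it does not make the walker's trajectory independent of the tagged particles.

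The paper's fix is a different and essential idea. It isolates an independent layer $Q_s\sim\Poi(\lambda_*)$ of fresh particles guaranteed to be present at the walker's location regardless of $\gamma$, $A$ and the walker's past. On the event $Q_{[s,s+T']}\ge 1$ both walkers see an occupied site at every step of the window and hence make identical jumps drawn from $\alpha(1,\cdot)$ that are \emph{independent} of the conditioning; this resets the coupling, and the collision estimate (Lemma \ref{lem:trick1RWRW}) is applied only from the end of the window onward, giving a bound of order $(T')^{-d/2+2}$. Since a fully occupied window of length $T'$ has probability $(1-e^{-\lambda_*})^{T'}$, one can only guarantee such a window of length $T'\asymp\log T$ somewhere before time $T$ up to an exponentially small failure probability; this, and not any truncation over the ages $i$ of the past observations (that sum already converges absolutely for $d\ge 5$), is the source of the polylogarithmic rate. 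Your proposed truncation scale $r(t)$ and dyadic iteration therefore optimize the wrong quantity and cannot produce the stated bound.
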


We note here that the proof of Theorem \ref{thm:high} hinges on Theorem \ref{thm:domination2}, presented in the next section and proved in the last section, from which we also derive that the RWRW model is strongly elliptic. Before this, however, we next explain how Theorem \ref{thm:high}  is used to derive our main results.  

\begin{proof}[Proof of Theorems \ref{thm:main}]
 The conclusions follow as a direct application of Theorem \ref{thm:high} to Proposition \ref{prop:mixing1}. Indeed, by Theorem \ref{thm:high}, when $d\geq 5$ we have that $\phi(t)\rightarrow 0$ as $t\rightarrow \infty$. 
\end{proof}

\begin{proof}[Proof of Theorem \ref{thm:main2}]
 Firstly, by Lemma \ref{cor:ellipticity} and Lemma \ref{prop:varInfinite}, $(X_t)$ is strongly elliptic and satisfy $\lim_{t \rightarrow \infty} \Var(X_t)=\infty$. Thus, the conclusions follow as a direct application of Theorem \ref{thm:high} to Proposition \ref{prop:mixing2}. Indeed, by Theorem \ref{thm:high}, when $d\geq 5$ we have that $\phi(t)\rightarrow 0$ as $t\rightarrow \infty$. Hence, the waFCLT  in Theorem \ref{thm:main2}a) is a consequence of Proposition \ref{prop:mixing2}i). Moreover,   when $d\geq 9$, by the bound in Theorem \ref{thm:high}, it follows that   $\sum_{t \geq 1} \sqrt{\phi(2^t)}<\infty$, and thus Theorem \ref{thm:main2}b) follows by Proposition \ref{prop:mixing2}ii).
 \end{proof}
 
\begin{rem}\label{rem:MixingInO}
The mixing property obtained in Theorem  \ref{thm:high} is the key to the proofs of Theorem \ref{thm:main} and Theorem \ref{thm:main2}. To this end, we want to note that we do not think that the decay rate that we obtain for $\phi(t)$ is optimal. In fact, as we  plan to address in a follow-up paper to this,  
we believe  further improvements to the current approach are still possible. 

Motivation for this belief partly stems from the study of the environment along a fixed bi-infinite (random walk) path. See particularly Corollary \ref{prop:mixing_along_d_path} below,  where we prove mixing estimates for the latter object with an improved decay rate when compared to Theorem  \ref{thm:high}. 

For the proof of Theorem  \ref{thm:high}, where we need to consider the random path of $(X_t)$ instead,  with the current methods  we have to compensate for utilising the ellipticity property of the RWRW model to allow $(X_t)$ to  forget the  environment within a certain time-window. This causes a significantly weaker bound on the decay rate than that obtained in Corollary \ref{prop:mixing_along_d_path}.
\end{rem}

We now turn to the proof of Proposition \ref{prop:varInfinite}, which is based on ideas from  \cite{PeresPopovSousiRTRW2013}, see Proposition 1.4 therein and its proof. 

 \begin{proof}[Proof of Proposition \ref{prop:varInfinite}]
We claim that there is some universal constant $C>0$ such that, for any $n \in \bN$ and $\omega = (\omega_t)$, 
\begin{equation}\label{eq:claimVariance}
P_{\omega}( \|X_n -E(X_n)\|)_2>\epsilon)  \geq 1- C \epsilon^d n^{1-d/2},
\end{equation}
where $\|\cdot\|_2$ denotes the Euclidean norm on $\bR^d$. 
From this, by setting e.g.\ $\epsilon=\epsilon(n) = n^{1/3d}$ and applying Chebychevs inequality, it immediately follows that the variance of $\alpha \cdot X_t$ diverges for any $\alpha \in \bR^d \setminus \{o\}$.

To see that the claim \eqref{eq:claimVariance} holds, for $i=0$ or $1$, let $(W_j^\super{i})_{j \geq 1}$ be independent i.i.d.\ sequences of random variables in $\bZ^d$ with common law given by $\alpha(i,\cdot)$, respectively. 
Further, for $R>0$ and $y=(y_1,\dots,y_d) \in \bZ^d$, write $B(y,R) =  \{ (x_1,\dots,x_d) \in \bZ^d \colon \|y-x\|_2 \leq R \}$ and, for a  fixed choice of  $(i_0,i_1) \in \bN_0 \times \bN_0$ with $ i_0+I_1 = n$,
let $W_{[1,i_j]}^\super{j} =\sum_{l=1}^{i_j} W_j^\super{j}$, where $j=0$ or $1$.  Then, recall \citet[Theorem 6.2]{Esseem1968}, which  says that, for any $0\leq r \leq R$, 
\begin{align}
&\sup_{y \in \bR^d} \bP \left( W_{[1,i_0]}^\super{0}   +W_{[1,i_1]}^\super{1}  \in B(y,R) \right)
\\  \leq &C_d  (R/r)^d ( \sup_{u \geq r}  u^{-2} \sum_{k=1}^{i_0} \chi_k^\super{0}(u) + \sum_{k=1}^{i_1} \chi_k^\super{1}(u) )^{-d/2},
\end{align}
where $\chi_k^\super{i}(u)$ is a non-decreasing function of $u>0$ (see \citet[Equation (6.4)]{Esseem1968}  for a precise definition) satisfying $\sup_{u \geq r} u^{-2} \chi_i^\super{i}(u)>0$ whenever $W_j^\super{i}$ is non-singularly distributed. 
Note that, by assumption that the model is truly d-dimensional, both $W_j^\super{0}$ and $W_j^\super{1}$ satisfy the latter condition. 
Therefore, letting $r=1$ and fixing $u\geq 1$ such that $a = \frac{\chi_1^\super{0}(u)}{u^2} \wedge \frac{\chi_1^\super{1}(u)}{u^2}> 0$, it follows that
\begin{equation}
\sup_{y \in \bR^d}  \bP( W_{[1,i_0]}^\super{0}   +W_{[1,i_1]}^\super{1}  \in B(y,R))  \leq C_d  R^d a^{-d/2} n^{-d/2},
 \end{equation}
 for some constant $C_d \in (0,\infty)$.  Now, the number of such $(i_0,i_1)$  is exactly $n+1$. By the union bound it thus follows that,  for some constant $C \in (0,\infty)$ and for any $y \in \bR^d$, 
\begin{equation}
\bP(\ \exists \: (i_0,i_1) \in \bN_0 \times \bN_0\colon i_0+i_1= n \text{ and }W_{[1,i_0]}^\super{0}   +W_{[1,i_1]}^\super{1} \in B(y,R)) \leq C R^d n^{1-d/2}.
\end{equation} 
From this, letting $y=E(X_n)$, we see that \eqref{eq:claimVariance} holds since $X_n$ is equal in distribution to $W_{[1,i_0]}^\super{0}   +W_{[1,i_1]}^\super{1}$ for some (random) $(i_0,i_1) \in \bN_0 \times \bN_0$ satisfying $i_0+i_1=n$. 
 \end{proof}

\section{Properties of the environment}

In this section we first present an essential tool for our proof of Theorem \ref{thm:high}, see Theorem \ref{thm:domination2}. As a first application of this, we there-next present the proof of Lemma \ref{cor:ellipticity} that the RWRW model is strongly elliptic. 

\subsection{A domination results for the environment}\label{sec:SDfRW}

We start by recalling from \cite[Section 2]{RWRWhighdensity2019} a very convenient way of constructing the environment $(\omega_t)$ for the RWRW model that we will make heavily use of in the following. For this, let $S$ be the space of all bi-infinite random walk paths on $\bZ^d$ that the particles may attain. That is, for some fixed $R\in \bN$,
\begin{equation}
S \coloneqq \{ \eta \colon \bZ \rightarrow \bZ^d \colon |\eta(i+1) - \eta(i)|\leq R \: \forall \: i \in \bZ\},
\end{equation}
which we furthermore  endow with the $\sigma$-algebra $\cS$ generated by the canonical coordinates $\eta \rightarrow \eta(i)$, $i\in \bZ$.

As previously introduced, let $(Y^{z,\super{i}})_{z \in \bZd, i \in \bN}$ be a collection of independent random elements of $S$, with each $Y^{z,\super{i}}$ distributed as a double-sided truly d-dimensional random walk on $\bZ^d$ started at $z$ and with range $R$. For $K \subset \bZ^d \times \bZ$, denote by $S_K$ the set of trajectories in $S$ that intersect $K$ and consider the space of point measures
\begin{equation}
H \coloneqq \{ \eta = \sum_i \delta_{\eta_i} \colon \eta_i \in S \text{ and } \eta(S_{\{y\}})<\infty \text{ for every } y \in \bZd \times \bZ \},
\end{equation}
endowed with the $\sigma$-algebra generated by the evaluation maps $\eta \mapsto \eta(S_K)$, for  $K\subset \bZd \times \bZ$.

Now, fix $\lambda>0$ and recall that $N(x,0)$, $x \in \bZ^d$ are i.i.d.\ Poisson$(\lambda)$ random variables. Letting $\eta \in H$ be the random element given by
\begin{equation}
\eta \coloneqq \sum_{z\in \bZd} \sum_{i \leq N(z,0)} \delta_{Y^{z \super{i}}},
\end{equation}
and denoting by $P_{x,0}^{RW}$ the law of a bi-infinite  random walk $Y^{x,\super{i}}$, 
it follows that $\eta$ is a Poisson point process on $H$ with intensity measure given by
\begin{equation}
\lambda \sum_{z \in \bZd} P^{RW}_{(z,0)}.
\end{equation}
In particular, via this construction, we obtain the environment $\omega$ defined in \eqref{eq:environment} simply by setting   $\omega_t(x) = \ind_{N(y,t) \geq 1}$ where
\begin{equation}
N(y,t) \coloneqq \eta( S_{\{y,t\}}), \quad \text{ for } (y,t) \in \bZ^d\times \bZ.
\end{equation}

 We denote by $\bQ$ the law  of $\eta$ on $H$.  Moreover, for $\gamma \in \Gamma_{-\infty}$, we denote  by $\bQ_{\gamma^c}$    the law of $\eta^{\gamma^c}$, that is, the process $\eta$ without any random walk trajectory intersecting $\gamma$.     Then, since the set of trajectories which pass through $\gamma$ and those which do not are disjoint, $\bQ_{\gamma^c}$ is again a Poisson point process.  More precisely, 
 the number of bi-infinite random walk paths in $\bQ_{\gamma^c}$ which intersects $\bZd$ at time $0$ is simply a field of independent Poisson random variables on $\bZd$ with densities $\lambda P_{x,0}^{RW}(Y_{-s} \neq \gamma_{-s}, 1\leq s \leq |\gamma| )$.   Moreover, for positive times, these are forwards in time independent standard random walks, and backwards in time independent standard random walks conditioned to not intersect $\gamma$. 
 
 Similarly, we denote by $\bQ_{\gamma}$  the restriction of $\bQ$ to random walk trajectories whose path go through $\gamma$. From the above construction we thus have that $\bQ =    \bQ_{\gamma} \bigoplus  \bQ_{\gamma^c}$, where we use the notation $\bigoplus$  to emphasise that the samples from   $\bQ_{\gamma}$ and  $\bQ_{\gamma^c}$  are independent.
 
  More generally, for $A \in \cA(\gamma)$ with $\gamma \in \Gamma_{-\infty}$, we denote by $\bQ_A$ the law $\bQ(\cdot \mid A)$. Since the conditioning only involves random walk trajectories which intersects $\gamma$, this can similarly be written as $\bQ_A = \bQ_{\gamma^c} \bigoplus \bQ_{A,\gamma}$. 
Particular, $\bQ_A =\bQ_{\gamma^c}$ if $A=\{ \omega_s(\gamma_s) = 0 \text{ for all } s \in \{- |\gamma|, \dots, -1\} \}$.

The above construction of the underlying environment directly inherits  a natural monotone coupling of $\bQ_A$ and $\bQ_{\gamma^c}$. For this, consider the partial ordering by set-inclusion, i.e.\ $B_1\leq B_2$ if $B_1\subset B_2$, for the Poisson point processes we study. As standard, we say that a measure $\bQ_2$ stochastically dominates $\bQ_1$, written $\bQ_1 \preceq \bQ_2$, if  $\bQ_1$ and $\bQ_2$ can be coupled such that with probability $1$ any realization $(B_1,B_2)$ satisfies $B_1\leq B_2$. By the above construction, the following trivially holds. 

\begin{lem}\label{lem1}
For any $\gamma \in \Gamma_{-\infty}$ and $A \in \cA(\gamma)$ 
we have that 
$\bQ_{\gamma^c}  \preceq \bQ_A$. 
\end{lem}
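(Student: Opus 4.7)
The plan is to produce an explicit monotone coupling that witnesses $\bQ_{\gamma^c} \preceq \bQ_A$. The key structural fact has already been established in the paragraphs preceding the lemma: the event $A \in \cA(\gamma)$ is measurable with respect to the $\sigma$-algebra generated by those trajectories of $\eta$ that intersect $\gamma$, since $A$ only constrains $\omega_s(\gamma_s)$ for $s \in \{-|\gamma|,\dots,-1\}$, and $N(\gamma_s,s)$ is determined exclusively by trajectories passing through $(\gamma_s,s)$. By the standard restriction/splitting property of Poisson point processes applied to the disjoint events $\{\eta_i \in S_\gamma\}$ and $\{\eta_i \notin S_\gamma\}$, the original PPP decomposes into two independent PPPs, yielding $\bQ = \bQ_\gamma \bigoplus \bQ_{\gamma^c}$ and, after conditioning on $A$, the refined decomposition $\bQ_A = \bQ_{A,\gamma} \bigoplus \bQ_{\gamma^c}$ stated just before the lemma.

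With this decomposition in hand, the coupling is immediate. First, sample $\eta^{(1)} \sim \bQ_{\gamma^c}$. Second, independently of $\eta^{(1)}$, sample $\eta^{(2)} \sim \bQ_{A,\gamma}$. Set $\eta \coloneqq \eta^{(1)} + \eta^{(2)}$. By the decomposition above, $\eta$ has law $\bQ_A$, while $\eta^{(1)}$ has law $\bQ_{\gamma^c}$ by construction. Since $\eta^{(2)}$ is a nonnegative point measure, the point measures satisfy $\eta^{(1)} \leq \eta^{(1)} + \eta^{(2)} = \eta$ in the set-inclusion partial order, so the coupling $(\eta^{(1)}, \eta)$ realizes the domination $\bQ_{\gamma^c} \preceq \bQ_A$ almost surely.

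There is essentially no genuine obstacle to this argument, which is why the authors describe the claim as ``trivially'' holding. The only mildly nontrivial ingredient is the independence implicit in the splitting $\bQ = \bQ_\gamma \bigoplus \bQ_{\gamma^c}$, and well-definedness of $\bQ_{A,\gamma}$ as a conditional probability. The former is the usual fact that thinning a Poisson point process along a measurable partition of the mark space produces independent Poisson point processes; the latter is ensured by the definition of $\cA(\gamma)$, which restricts to events with $\bP(A(\gamma,\sigma)) > 0$. The entire content of the lemma is that adjoining extra (conditional) particles on top of the background field cannot decrease the resulting configuration.
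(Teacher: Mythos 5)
Your proposal is correct and matches the paper's argument: the authors likewise invoke the decomposition $\bQ_A = \bQ_{\gamma^c} \bigoplus \bQ_{A,\gamma}$, which makes the monotone coupling immediate, and accordingly state that the lemma ``trivially holds'' by the construction. You have simply written out the coupling explicitly.
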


The next statement, crucial to our analysis of the RWRW model, yields a way to control the law $\bQ_A$ of Lemma \ref{lem1} from above. Informally it says that $\bQ_{A}$ equals (in law) $\bQ_{\gamma^c}$ plus a number of independent particle trajectories intersecting $\gamma$. The exact number and positions of these additional particles depends strongly on $A$ in an apparently highly non-trivial way. However, by enumerating the particles according to their \emph{anchor point}, that is, the most recent point at which they intersect $\gamma$, we can assure that there are at most $n+Poisson(\lambda)$ of them at any location of $\gamma$, for some  $n=n(\lambda) \in \bN$ not depending on any of the other parameters.

\begin{thm}\label{thm:domination2}
Let $\gamma \in \Gamma_{-\infty}$, $A \in \cA(\gamma)$ and $\lambda>0$. Then it holds that, in law, 
\begin{equation}\label{eq:domi22}
\bQ_A= \bQ_{\gamma^c} \bigoplus  \cL (\sum_{i\in \cZ} \delta_{Z^\super{i}}),
\end{equation}
  where $ \cL (\sum_{i\in \cZ} \delta_{Z^\super{i}})$ is the law of $\sum_{i\in \cZ} \delta_{Z^\super{i}}$ and the $Z^\super{i}$'s are random walk paths in $S$ that intersects $\gamma$ having the following properties, depending on an auxiliary random variable $\bfy$:
\begin{enumerate}
\item \label{enum:rw} random walk paths:
\begin{enumerate}
   \item The $Z^\super{i}$'s have the conditional law $P^{RW}(\;\cdot \;|\; D_i)$, where the random event $D_i $  
 only concerns the behaviour of $Z^\super{i}$ in the time interval $[-|\gamma|,-1]$.   
    \item We associate an anchor point $z_i \in \{-|\gamma|,...,-1\}$ to each $Z^\super{i}$.
          This point in particular satisfies $Z^\super{i}_{z_i} = \gamma_{z_i}$, and is a function of $\bfy$.
    \item 
    $D_i$ satisfies $\{Z^\super{i}_{z_i}=\gamma_{z_i}\}\cap\{ Z^\super{i}_s \neq \gamma_s, z_i<s\leq -1 \} \subset D_i \subset \{Z^\super{i}_{z_i}=\gamma_{z_i}\}$ and is a deterministic function of $\bfy$.
\end{enumerate}

\item \label{enum:independence} independence and conditional independence:
\begin{enumerate}
      \item conditioned on $\bfy$ the $Z^\super{i}$'s are independent.
      \item $\eta^{\gamma^c}$ and $(\bfy, \cZ, Z^\super{i}, 1\leq i\leq \cZ)$ are independent.
\end{enumerate}

\item \label{enum:domination} stochastic domination:

\begin{enumerate}
\item There is an $n=n(\lambda) \in \bN$ such that $|\cZ_s| \preceq Poi(\lambda)+n$, where $\cZ_s := \{ i : z_i = s \}$, $s\in \{-|\gamma|,\dots,-1\}$.
\item The vector $(|\cZ_s|)_{s=-|\gamma|,\dots,-1}$ is stochastically dominated by a vector of independent $Poi(\lambda)+n$-distributed random variables.
\end{enumerate}

\item  \label{enum:bounds} random walk bounds when $d\geq 3$:
\begin{enumerate}
      \item there is $\delta>0$ not depending on $\gamma$ nor $A$ such that $P^{RW}(D_i)>\delta$.
\item there exists a constant $C<\infty$, not depending on $\lambda, \gamma$ nor $A$, so that 
\begin{equation}\label{enum:bound} 
\bQ_A(Z^{i}_t = x \;|\; \bfy, z_i) \leq C(t-z_i)^{-\frac d2 }.
\end{equation}
\end{enumerate}
\end{enumerate}
\end{thm}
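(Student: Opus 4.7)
The strategy is to exploit the Poisson process structure of $\eta$ to realise the conditional law $\bQ_A$ as $\bQ_{\gamma^c}$ combined with an independent, explicitly constructed collection of extra trajectories, built via a sequential conditioning procedure. The first step is to split $\eta$ into its sub-processes $\eta^{\gamma^c}$, consisting of trajectories not intersecting $\gamma$ at any time in $\{-|\gamma|,\dots,-1\}$, and $\eta^{\gamma}$, consisting of those that do. Since the event $A$ is measurable with respect to $\eta^{\gamma}$ alone, conditioning on $A$ leaves $\eta^{\gamma^c}$ untouched and independent of the rest, yielding both the $\bigoplus$ decomposition in \eqref{eq:domi22} and the independence assertion in (\ref{enum:independence})(b). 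Within $\eta^{\gamma}$ I would peel off, for each $s\in\{-|\gamma|,\dots,-1\}$, the Poisson sub-process $\tau_s$ of trajectories whose \emph{latest} intersection with $\gamma$ in this window is exactly at time $s$; by Poisson thinning the $\tau_s$ are independent, with intensities $\mu_s \leq \lambda$.

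Next, I would analyse the conditional law $\bQ_A^\gamma$ by processing the constraints $\omega_s(\gamma_s)=\sigma_{-s}$ in decreasing order of $s$, so that at step $s$ only $\tau_s$ and the already-processed $\tau_{s'}$ with $s'>s$ can affect $\omega_s(\gamma_s)$. When $\sigma_{-s}=0$ I impose $\tau_s=\emptyset$ and condition each already-placed trajectory to avoid the space-time point $(\gamma_s,s)$. When $\sigma_{-s}=1$, I first check whether any previously-placed trajectory already passes through $(\gamma_s,s)$; if yes, no further action is needed, if not, I condition on $|\tau_s|\geq 1$, which by the standard Poisson size-bias bound $[\Poi(\mu)\mid \Poi(\mu)\geq 1]\preceq 1+\Poi(\mu)$ forces at most one ``mandatory'' trajectory anchored at $s$. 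The auxiliary variable $\bfy$ is the record of these binary decisions (for each $s$ with $\sigma_{-s}=1$, whether its constraint was already satisfied by an earlier trajectory, and if so by which one) together with $\sigma$; the anchor $z_i$ of each $Z^\super{i}$ is the index of the sub-process it belongs to, and the event $D_i$ is the accumulated list of constraints imposed on trajectory $i$ by the procedure, which lies between $\{Y_{z_i}=\gamma_{z_i}\}$ and $\{Y_{z_i}=\gamma_{z_i},\,Y_s\neq\gamma_s,\,z_i<s\leq -1\}$ as the statement requires.

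Properties (\ref{enum:rw})--(\ref{enum:domination}) would then follow essentially by construction: each $Z^\super{i}$ has the SRW law conditioned on $D_i$; given $\bfy$, trajectories within and across the $\tau_s$ are independent thanks to the underlying independence of the Poisson points; and $|\cZ_s|\preceq 1+\Poi(\mu_s)\preceq 1+\Poi(\lambda)$, so $n=1$ (or a slightly larger constant) suffices, with cross-anchor independence inherited from the independence of the $\tau_s$. The upper estimate in (\ref{enum:bounds}) should follow from the local CLT giving $P(Y_t=x\mid Y_{z_i}=\gamma_{z_i})\leq C(t-z_i)^{-d/2}$, together with a uniform lower bound on $P^{RW}(D_i)$.

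The principal technical obstacle I anticipate is establishing the uniform lower bound $P^{RW}(D_i)>\delta$. A direct union bound estimates the probability that the anchored walk meets $\gamma_s$ at some $s\in(z_i,-1]$ by $\sum_{k\geq 1}C k^{-d/2}$, which is summable for $d\geq 3$ but not obviously strictly less than $1$. Closing this step will likely require weakening $D_i$ so that avoidance is only imposed at times sufficiently far from $z_i$ (where the heat kernel is small enough for the union bound to close), with any ``nearby'' intersection events absorbed by enlarging the constant $n$ in (\ref{enum:domination})(a). Making this bookkeeping compatible with the sequential construction and the conditional independence structure is, in my view, the delicate core of the proof.
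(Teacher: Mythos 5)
Your high-level strategy coincides with the paper's: split off $\eta^{\gamma^c}$ (which is unaffected by the conditioning on $A$), anchor the mandatory extra trajectories at their most recent intersection with $\gamma$, process the constraints from the most recent time backwards, apply the domination $[\Poi(\mu)\mid\Poi(\mu)\geq 1]\preceq \Poi(\mu)+n$ at each forced step, and finish with heat-kernel estimates for Property~\ref{enum:bounds}. The problem is that the step you yourself call ``the delicate core'' is a genuine gap, and it is exactly the part to which the paper devotes all of Section~\ref{sec:cpp}. Your procedure repeatedly conditions the already-placed trajectories on events of the form ``some/none of them passes through $(\gamma_s,s)$'' as $s$ decreases. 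After these conditionings it is not clear that, given $\bfy$, each $Z^\super{i}$ still has a law of the form $P^{RW}(\cdot\mid D_i)$ with $D_i$ a deterministic function of $\bfy$ of the shape required by Property~\ref{enum:rw}, nor that the $Z^\super{i}$ remain conditionally independent (Property~\ref{enum:independence}): the ``already satisfied?'' checks correlate the trajectories unless the information revealed at each step is structured very carefully. The paper's mechanism is to encode each trajectory by its full intersection pattern $x\in\{0,1\}^{|\gamma|}$, order these patterns lexicographically, reveal at each step the \emph{largest} pattern carrying positive Poisson mass (Lemma~\ref{lemma:decomposition}) --- which settles in one stroke every constraint that trajectory satisfies, rather than checking time by time --- and then iterate on a coarsened index set, undoing the coarsening by multinomial splitting (Corollary~\ref{lemma:decompositionK}, Lemma~\ref{lemma:iteration2}, Theorem~\ref{thm:domination2GeneralOlD}). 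Some such device is indispensable: as written, your construction asserts the output of that machinery rather than producing it, and in particular the domination of the whole vector $(|\cZ_s|)_s$ by \emph{independent} $\Poi(\lambda)+n$ variables (Property~\ref{enum:domination}) rests on the same unproven independence bookkeeping.

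On the second difficulty you raise, the uniform bound $P^{RW}(D_i)>\delta$: your diagnosis that a naive union bound gives $\sum_k Ck^{-d/2}$, summable but not visibly $<1$, is correct, but your proposed remedy (weakening $D_i$ so that avoidance of $\gamma$ is imposed only far from the anchor, absorbing nearby intersections into a larger $n$) is the wrong move. The theorem asserts an exact equality in law in \eqref{eq:domi22}; if a trajectory anchored at $z_i$ is permitted to re-intersect $\gamma$ at nearby times in $V$ it violates the conditioning event $A$, and this error cannot be ``absorbed'' into the stochastic domination constant without downgrading the equality to a one-sided bound. The paper instead proves the full avoidance estimate directly (Lemma~\ref{lem:SRWestimates}, part 1) by a two-scale argument: on a window of fixed length $n$ next to the anchor there are only finitely many admissible path segments and the walk has finite range, so avoiding $\gamma$ while landing in a suitable set $D_n^{(\gamma)}$ has probability bounded below uniformly in $\gamma$; beyond that window the union bound $\sum_{k>n}Ck^{-d/2}$ is made smaller than $1$ by choosing $n$ large. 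You should close your gap this way rather than by modifying $D_i$.
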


Theorem \ref{thm:domination2} follows from a more general theorem, applicable to a larger class of conditional Poisson point processes, see Theorem \ref{thm:domination2GeneralOlD}. Details to this and the proof of  Theorem \ref{thm:domination2} are presented in Section \ref{sec:cpp}.


\subsection{Strong ellipticity for the RWRW  model}\label{sec:elliptic}

As a first application of Theorem \ref{thm:domination2},  combined with  standard random walk estimates and Lemma \ref{lem1}, we here present the proof of Lemma \ref{cor:ellipticity}. 
 For this, we first collect a couple of basic estimates for simple random walks, whose proof we postpone to the last subsection. 

\begin{lem}\label{lem:SRWestimates}
Let $(Y_t)_{t \in \bZ}$ be a simple random walk on $\bZd$ with $d\geq 3$.
\begin{enumerate}
\item There exists $\delta>0$ such that 
\begin{equation}\label{lem:SRWestimates1} 
\inf_{\gamma\in \Gamma_{-\infty}} P_{o,0}^{RW} \big( Y_i \neq \gamma_i \text{ for all } i \in [-|\gamma|,-1] \big)>\delta.
\end{equation}
\item There exists $C<\infty$ 
such that, for any $\gamma \in \Gamma_{-\infty}$, $t \leq |\gamma|$ and $s>t$, 
\begin{equation} \label{lem:SRWestimates2}
\sup_{z \in \bZ^d} P_{o,0}^{RW} (Y_{-s} = z \mid Y_i \neq \gamma_i \text{ for all } i \in [-t,-1] ) \leq Cs^{-d/2}. \end{equation}
\end{enumerate}
\end{lem}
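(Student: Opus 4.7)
The two estimates are linked: part~\eqref{lem:SRWestimates2} follows from~\eqref{lem:SRWestimates1} by a short conditional probability argument, so the real work lies in establishing~\eqref{lem:SRWestimates1}.

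For~\eqref{lem:SRWestimates2}, I would use Bayes' rule in combination with~\eqref{lem:SRWestimates1}. Note that the truncated path $(\gamma_{-t},\ldots,\gamma_{-1},o)$ belongs to $\Gamma_{-\infty}$, so~\eqref{lem:SRWestimates1} yields $P_{o,0}^{RW}(Y_i \neq \gamma_i,\, i \in [-t,-1]) \geq \delta$. Meanwhile, the standard on-diagonal heat kernel estimate for simple random walk---obtained from the local central limit theorem after using the stationarity $Y_{-s} \stackrel{d}{=} Y_s$---gives $P_{o,0}^{RW}(Y_{-s} = z) \leq C s^{-d/2}$ uniformly in $z$. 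Writing
\[
P_{o,0}^{RW}\bigl(Y_{-s}=z \,\big|\, Y_i\neq\gamma_i,\, i\in[-t,-1]\bigr) \leq \frac{P_{o,0}^{RW}(Y_{-s}=z)}{P_{o,0}^{RW}(Y_i\neq\gamma_i,\, i\in[-t,-1])} \leq \frac{C s^{-d/2}}{\delta}
\]
gives~\eqref{lem:SRWestimates2}.

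For~\eqref{lem:SRWestimates1}, I would study the space-time collision count $N_\gamma := \sum_{i=1}^{|\gamma|} \ind_{Y_{-i} = \gamma_{-i}}$. A union bound combined with the uniform heat kernel estimate $\sup_y P_{o,0}^{RW}(Y_i = y) \leq C_d i^{-d/2}$ (valid for $d \geq 3$) gives $E[N_\gamma] \leq \sum_{i \geq 1} C_d i^{-d/2} < \infty$, making the expected number of collisions finite. To pass from this first-moment bound to a positive lower bound on $P(N_\gamma = 0)$ that is uniform in $\gamma$, I would decompose at the first-collision time $\tau := \min\{i \geq 1 : Y_{-i} = \gamma_{-i}\}$ and apply the strong Markov property of the doubly infinite simple random walk: conditionally on $\{\tau = k,\, Y_{-k} = \gamma_{-k}\}$, the trajectory $(Y_{-k-j})_{j \geq 1}$ is an independent simple random walk started at $\gamma_{-k}$, so the heat kernel estimate applied to this fresh walk controls the expected number of further collisions with the tail of $\gamma$ uniformly by $\sum_{j \geq 1} C_d j^{-d/2}$. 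Combined with the identity $E[N_\gamma] = P(\tau<\infty) \cdot E[N_\gamma \mid \tau<\infty]$, this Green's-function-type argument produces the desired uniform constant $\delta > 0$.

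\textbf{Main obstacle.} The delicate point in~\eqref{lem:SRWestimates1} is uniformity over paths $\gamma$ that may be chosen adversarially to track the argmax of the transition kernel at each time, keeping $E[N_\gamma]$ close to its upper bound; the crude Markov estimate $P(N_\gamma\geq 1) \leq E[N_\gamma]$ need not be strictly below $1$ in low dimensions. The first-hit decomposition circumvents this by exploiting the finiteness of the Green's function of the simple random walk in $d \geq 3$, which is uniform under spatial translations of the anchor point $\gamma_{-k}$.
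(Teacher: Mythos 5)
Your treatment of part (2) is fine and is exactly the paper's (implicit) argument: the truncated path $(\gamma_{-t},\dots,\gamma_{-1},o)$ lies in $\Gamma_{-\infty}$, so part (1) bounds the denominator by $\delta$ and the unconditional heat kernel bound does the rest. The gap is in part (1). The first-collision decomposition does not convert the finiteness of $G:=\sup_\gamma \bE[N_\gamma]\le\sum_{i\ge1}C_d\,i^{-d/2}$ into a uniform bound $\sup_\gamma P(\tau_\gamma<\infty)<1$. Writing $\bE[N_\gamma]=P(\tau_\gamma<\infty)\,\bE[N_\gamma\mid\tau_\gamma<\infty]$ and applying the strong Markov property at $\tau_\gamma$ only yields $\bE[N_\gamma\mid\tau_\gamma<\infty]=1+\bE[\text{further collisions}\mid\tau_\gamma<\infty]\ge 1$, hence $P(\tau_\gamma<\infty)\le \bE[N_\gamma]\le G$ --- which is Markov's inequality again and is vacuous, since $G\ge1$ in general (already for nearest-neighbour SRW on $\bZ^3$ the sum $\sum_i\sup_y P(Y_{-i}=y)$ exceeds $1$). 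The renewal identity that makes this work for visits to a \emph{fixed} site ($\bE[\#\text{visits}]=1/(1-q)$, so transience forces $q<1$) relies on every return having exactly the same return probability; for a moving target, the conditional probability of a further collision given $\tau_\gamma=k$ is $P(\tau_{\gamma^{(k)}}<\infty)$ for the shifted path $\gamma^{(k)}$, i.e.\ precisely the quantity you are trying to bound, and every inequality you can extract from the decomposition points the wrong way. Nothing in your sketch excludes a sequence of paths with $P(\tau_{\gamma^{(m)}}<\infty)\to1$ while $\bE[N_{\gamma^{(m)}}]$ stays bounded by $G$.

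The paper closes this by splitting at a \emph{deterministic} time $n$ rather than at the random first collision. For $i>n$ the union bound $P(\exists\,i\in(n,|\gamma|]:Y_{-i}=\gamma_{-i})\le\sum_{i>n}c\,i^{-d/2}$ is small for $n$ large, uniformly in $\gamma$; this is where $d\ge3$ enters and where your first-moment computation is genuinely used. For the finitely many steps $i\le n$ a separate, compactness/ellipticity input is needed: since there are only finitely many admissible prefixes $(\gamma_{-n},\dots,\gamma_{-1})$ and the walk has finite range, $\inf_\gamma\inf_{y\in D_n^{(\gamma)}}P\big(Y_i\neq\gamma_i,\ i\in[-n,-1],\ Y_{-n}=y\big)=:\epsilon_1^{(n)}>0$, where $D_n^{(\gamma)}$ is the support of $Y_{-n}$ conditioned on avoidance (equal to the unconditioned support minus the single point $\gamma_{-n}$; tracking this set is what handles parity issues). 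Chaining the two regimes via the Markov property at time $-n$, together with $\inf_\gamma P(Y_{-n}\in D_n^{(\gamma)})\to1$, gives the uniform $\delta$. If you want to salvage your write-up, keep the heat-kernel/union-bound computation as the far regime, but replace the renewal step by this finite-prefix argument for the near regime.
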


To conclude that the RWRW model is strongly elliptic amounts to showing that there is an $\epsilon>0$ such that, for any $A \in \cA(\gamma)$ with $\gamma \in \Gamma_{-\infty}$,
 \begin{equation}\label{eq:ellipicityA)}
  \bQ_A( N(o,0)=0) > \epsilon \: \text{ and } \:  
 \bQ_A( N(0,0)\geq 1) > \epsilon.
 \end{equation}

This is a fairly direct application of Theorem \ref{thm:domination2} that we use in order to prove the following lemma. Here and in the following we abuse notation slightly and also write $\bQ_A$ to denote the natural coupling of $\bQ_A$ and $\bQ_{\gamma^c}$ using Theorem \ref{thm:domination2}, where $\gamma \in \Gamma_{-\infty}$ and $A \in \cA(\gamma)$.

\begin{lem}\label{lem:RWRWellip1}
 There exists $\epsilon_1>0$ so that, for all $A \in \cA(\gamma)$ with $\gamma \in \Gamma_{-\infty}$
\begin{align}\label{eq:domination-no-difference}
\bQ_A(\eta^A(S_{\{(o,0)\}}) = \eta^{\gamma^c}(S_{\{(o,0)\}})) = \bQ_A(\sum_{i\in\cZ} \ind_{Z^\super{i}_0 =o}=0) \geq \epsilon_1.
\end{align}
\end{lem}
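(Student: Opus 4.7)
The plan is to apply Theorem \ref{thm:domination2} and combine two complementary bounds on the single-walk hitting probabilities with the stochastic domination of $(|\cZ_s|)_s$ to squeeze a positive lower bound through the expectation over $\bfy$. Set $N := \sum_{i \in \cZ} \ind_{Z^\super{i}_0 = o}$ and, for given $\bfy$, $p_i := \bQ_A(Z^\super{i}_0 = o \mid \bfy)$. By the conditional independence of the $Z^\super{i}$'s given $\bfy$ supplied by Theorem \ref{thm:domination2}, $\bQ_A(N = 0 \mid \bfy) = \prod_{i \in \cZ}(1 - p_i)$, so it suffices to control this product from below.

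First I would derive two deterministic upper bounds on $p_i$. The heat kernel estimate \eqref{enum:bound} applied at $t = 0$ immediately gives $p_i \leq C|z_i|^{-d/2}$. This is complemented by an ellipticity-type bound $p_i \leq q$ with $q < 1$: since $D_i$ only concerns $Z^\super{i}_s$ for $s \in [z_i,-1]$, the Markov property of the particle walk at time $-1$ yields
\begin{equation}
p_i = P^{RW}(Z^\super{i}_0 = o \mid D_i) \leq \max_{y \in \bZ^d} P^{RW}(Z^\super{i}_0 = o \mid Z^\super{i}_{-1} = y) \leq \max_z \pi(z) =: q,
\end{equation}
where $\pi$ is the one-step distribution of the particle walks. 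The truly $d$-dimensional assumption on the particles forces $\pi$ to be non-degenerate, so $q < 1$. Setting $\hat p_s := \min(q, Cs^{-d/2})$ for $s \geq 1$ gives $p_i \leq \hat p_{|z_i|}$ deterministically, hence
\begin{equation}
\bQ_A(N = 0 \mid \bfy) \geq \prod_{s = -|\gamma|}^{-1} (1 - \hat p_{|s|})^{|\cZ_s|}.
\end{equation}

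The right-hand side is coordinatewise decreasing in $(|\cZ_s|)_s$. By the stochastic domination in Theorem \ref{thm:domination2}, this vector is dominated componentwise by independent $\Poi(\lambda) + n$ random variables, so taking expectation over $\bfy$, using the generating-function identity $\bE[(1 - \hat p_s)^{\Poi(\lambda)+n}] = (1 - \hat p_s)^n e^{-\lambda \hat p_s}$, and invoking independence of the dominating variables gives
\begin{equation}
\bQ_A(N = 0) \geq \prod_{s \geq 1} (1 - \hat p_s)^n e^{-\lambda \hat p_s}.
\end{equation}
To conclude I would verify that this product is strictly positive, i.e.\ that $\sum_{s \geq 1} \hat p_s < \infty$ and $\sum_{s \geq 1} |\log(1 - \hat p_s)| < \infty$. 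Both follow from $\hat p_s = q$ on the finite set $\{s \colon s \leq (C/q)^{2/d}\}$ and $\hat p_s = Cs^{-d/2}$ thereafter, together with $|\log(1-x)| \leq 2x$ for $x \leq 1/2$; the decisive point is that $\sum s^{-d/2}$ converges precisely when $d \geq 3$. The resulting $\epsilon_1$ depends only on $\lambda, d, C, q$ and $n$, and is hence uniform in $\gamma$ and $A$. The main delicacy is that neither bound on $p_i$ is sufficient by itself: the heat kernel bound is vacuous for small $|z_i|$ (where $C|z_i|^{-d/2}$ can exceed $1$), while the ellipticity bound $q$ on its own is not summable in $s$. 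It is the truncation $\hat p_s = \min(q, Cs^{-d/2})$, combined with the independence and domination supplied by Theorem \ref{thm:domination2}, that bridges the two scales and closes the argument.
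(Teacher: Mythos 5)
Your proof is correct and follows essentially the same route as the paper's: conditional independence given $\bfy$, a two-regime bound on the hitting probabilities (heat kernel for distant anchor points, a uniform ellipticity bound for nearby ones), stochastic domination of $(|\cZ_s|)_s$ by independent $\Poi(\lambda)+n$ variables, and summability of $\sum_s s^{-d/2}$ for $d\geq 3$. The only (valid) variation is in the near regime, where you use the one-step bound $p_i\leq \max_z\pi(z)<1$ via the Markov property at time $-1$, whereas the paper instead bounds $\bQ_A(Z^{\super{i}}_0\neq o\mid\bfy)\geq\delta$ through the path-avoidance estimate \eqref{lem:SRWestimates1} of Lemma \ref{lem:SRWestimates}.
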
 

We postpone the proof of Lemma \ref{lem:RWRWellip1} to the end of this subsection.

\begin{lem}\label{lem:RWRWellip2}
There exists $\epsilon_2>0$ so that, for any $\gamma\in \Gamma_{-\infty}$, 
\[ \bQ_{\gamma^c}(N(o,0)=0)\geq \epsilon_2 \text{ and }  \bQ_{\gamma^c}(N(o,0)\geq 1)\geq \epsilon_2.\] 
\end{lem}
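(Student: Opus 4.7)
The plan is to exploit the Poisson structure of $\bQ_{\gamma^c}$ together with Lemma \ref{lem:SRWestimates}(1). The first, purely structural, observation is that in the construction of Section \ref{sec:SDfRW} every bi-infinite trajectory $Y^{z,\super{i}}$ satisfies $Y^{z,\super{i}}_0 = z$, so only particles ``born'' at $z=o$ can possibly contribute to $N(o,0)$. Concretely, I would invoke the explicit description of $\bQ_{\gamma^c}$ given in the paragraph preceding Lemma \ref{lem1}: the particles of $\bQ_{\gamma^c}$ occupying $\bZ^d$ at time $0$ form an independent Poisson field on $\bZ^d$ with densities $\lambda P_{x,0}^{RW}(Y_{-s}\neq \gamma_{-s},\, 1\leq s\leq |\gamma|)$. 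Setting $x=o$ thus identifies the law of $N(o,0)$ under $\bQ_{\gamma^c}$ as Poisson with mean
\[
\mu_\gamma \coloneqq \lambda \, P_{o,0}^{RW}\bigl(Y_i \neq \gamma_i \text{ for all } i \in [-|\gamma|,-1]\bigr).
\]

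Next I would bound $\mu_\gamma$ uniformly in $\gamma \in \Gamma_{-\infty}$. Trivially $\mu_\gamma \leq \lambda$, while Lemma \ref{lem:SRWestimates}(1) provides a $\delta>0$, independent of $\gamma$, such that $\mu_\gamma \geq \lambda\delta$. From the Poisson identity the two probabilities of interest then satisfy
\[
\bQ_{\gamma^c}(N(o,0)=0) = e^{-\mu_\gamma} \geq e^{-\lambda}, \qquad \bQ_{\gamma^c}(N(o,0)\geq 1) = 1-e^{-\mu_\gamma} \geq 1-e^{-\lambda\delta},
\]
and the claim follows with $\epsilon_2 \coloneqq \min\bigl(e^{-\lambda},\, 1-e^{-\lambda\delta}\bigr)>0$.

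There is no genuine obstacle in the argument; the lemma is essentially a packaging of the transience estimate of Lemma \ref{lem:SRWestimates}(1) (which is available since $d\geq 3$) into the Poisson framework provided by the construction of $\bQ$. The only delicate point is that the lower bound on $\mu_\gamma$ must be uniform over \emph{all} paths in $\Gamma_{-\infty}$, but this uniformity is already built into the statement of Lemma \ref{lem:SRWestimates}(1), so no additional work is needed beyond reading off the Poisson mean.
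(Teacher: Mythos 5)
Your argument is correct and is essentially identical to the paper's own (one-line) proof: both identify $N(o,0)$ under $\bQ_{\gamma^c}$ as Poisson with parameter $\lambda P_{o,0}^{RW}(Y_{-s}\neq\gamma_{-s},\,1\leq s\leq|\gamma|)$ and then invoke the uniform lower bound from Lemma \ref{lem:SRWestimates}(1) together with the trivial upper bound $\lambda$. Your write-up just makes explicit the intermediate steps (only particles started at $o$ contribute, and the final Poisson computation) that the paper leaves implicit.
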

\begin{proof}
This holds since, under $\bQ_{\gamma^c}$, the random variable $N(o,0)$ is Poisson-distributed with parameter $\lambda P_{o,0}^{RW}(Y_s \neq \gamma_{-s} \text{ for all }1\leq s\leq |\gamma| )$, which by Lemma \ref{lem:SRWestimates} is bounded away from $0$ when $d\geq3$.
\end{proof}

\begin{proof}[Proof of Proposition \ref{cor:ellipticity}]
Combining Lemma \ref{lem:RWRWellip1} with Lemma \ref{lem:RWRWellip2} and the independence from Theorem \ref{thm:domination2}, Property \ref{enum:independence}, yields that \eqref{eq:ellipicityA)} holds with 
$\epsilon=\epsilon_1\epsilon_2>0$.
\end{proof}

\begin{proof}[Proof of Lemma \ref{lem:RWRWellip1}]
We use the conditional independence from  Theorem \ref{thm:domination2}, Property \ref{enum:independence},  
and the conditional union bound to write 
\begin{align}
&\bQ_A(\sum_{i\in\cZ} \ind_{Z^\super{i}_0 =o}=0)
= \bE_{\bQ_A} \left[ \prod_{i\in\cZ} \bQ_A( Z^\super{i}_0 \neq o\;|\;\bfy)	\right]	\\
&\geq \bE_{\bQ_A} \left[ \!\!\!\! \: \:\prod_{i\in\cZ, Z_i>-R} \!\!\!\! \bQ_A( Z^\super{i}_0 \neq o\;|\;\bfy) \cap \big(1-\!\!\!\!\sum_{i\in\cZ, z_i\leq -R}\!\!\!\! \bQ_A( Z^\super{i}_0 = o\;|\;\bfy) \big) \right]	\label{eq:rw-ellipticity-1}
\end{align}
where $R \in \bN$ is a quantity to be determined later. 
Further, for $z_i>-R$, by the Property \ref{enum:rw} of  Theorem \ref{thm:domination2},
\begin{align}
\bQ_A\big( Z^\super{i}_0 \neq o \;\big|\; \bfy\big) 
&= P^{RW}\big( Y_0 \neq o \;\big|\; D_i(\bfy) \big)	\\
&\geq P^{RW}\big( Y_0 \neq o , D_i(\bfy) \;\big|\; Y_{z_i}=\gamma_{z_i} \big) 	\\
&\geq P^{RW} \big( Y_0 \neq o , Y_s \neq \gamma_s, z_i<s\leq -1 \;\big|\; Y_{z_i}=\gamma_{z_i} \big).
\end{align}
In particular, by Lemma \ref{lem:SRWestimates} there is a constant $\delta>0$ such that
\begin{align}\label{eq:rw-ellipticity-1a}
\prod_{i\in\cZ, Z_i>-R} \!\!\!\! \bQ_A ( Z^\super{i}_0 \neq o\;|\;\bfy)
\geq  \prod_{s=1}^{R-1} \delta^{|\cZ_{-s}|}.
\end{align}

When $z_i \leq -R$, again by Properties \ref{enum:rw}  of  Theorem \ref{thm:domination2},
\begin{align*}
\bQ_A \big( Z^\super{i}_0 = o \;\big|\; \bfy\big) 
&=  P^{RW}\big( Y_0 = o \;\big|\; D_i(\bfy) \big) 		\\
&\leq
\frac{P^{RW}( Y_0 = o |Y_{z_i}=\gamma_{z_i})}{P^{RW}(Y_s \neq \gamma_s, z_i<s\leq -1)}.
\end{align*}
As above, by Lemma \ref{lem:SRWestimates}, the denominator is greater than $\delta>0$ and, moreover, there exists some constant $C<\infty$ so that 
\begin{align}
\bQ_A \big( Z^\super{i}_0 = o \;\big|\; \bfy\big) \leq C |z_i|^{-\frac d2}.
\end{align}
We now choose $R$ large enough so that $C R^{-\frac d2}<1$. 
Then 
\begin{align}
\prod_{i\in\cZ, i\geq R} (1-\bQ_A( Z^\super{i}_0 = o\;|\;\bfy) )
&\geq \prod_{i\in\cZ, i\geq R} (1-C |z_i|^{-\frac d2} )   \\
&= \prod_{s=R}^{|\gamma|} (1-C s^{-\frac d2} )^{|\cZ_{-s}|}.		\label{eq:rw-ellipticity-1b}
\end{align}

Putting \eqref{eq:rw-ellipticity-1a} and \eqref{eq:rw-ellipticity-1b} into \eqref{eq:rw-ellipticity-1} we obtain
\[
\bQ_A(\sum_{i\in\cZ} \ind_{Z^\super{i}_0 =o}=0) \geq \bE_{\bQ_A} \left[ \prod_{s=1}^{R-1} q_{-s}^{|\cZ_{-s}|} \prod_{s=R}^{|\gamma|} (1-C s^{-\frac d2} )^{|\cZ_{-s}|}\right].
\]
Since the product is decreasing in $|\cZ_{-s}|$ we can use the domination property Theorem \ref{thm:domination2}, Property \ref{enum:domination}, by a product of independent $(Poi(\lambda)+n)$-variables, so that we get
\begin{align}\label{eq:elliptic-last}
\bQ_A (\sum_{i\in\cZ} \ind_{Z^\super{i}_0 =o}=0) \geq \prod_{s=1}^{R-1} \bE [q_{-s}^{Poi(\lambda)+n} ]\prod_{s=R}^{|\gamma|} \bE [ (1-C s^{-\frac d2} )^{Poi(\lambda)+n}].
\end{align}
This is clearly positive, but we have to show that it is bounded away from $0$ for arbitrary large $|\gamma|$.
Since the exponential moment of a Poisson random variable is explicit we get
\begin{align*}
\prod_{s=R}^{|\gamma|} \bE [(1-C s^{-\frac d2} )^{Poi(\lambda)+n}]
&= \exp\left( - \sum_{s=R}^{|\gamma|} \left(\lambda Cs^{-\frac d2}-n \log(1-Cs^{-\frac d2}) \right) \right),
\end{align*}
from which, when applied to  \eqref{eq:elliptic-last}, it follows that \eqref{eq:domination-no-difference} holds when $d\geq 3$.
\end{proof}

\subsection{Random walk estimates}

 We end this section with the proof of Lemma \ref{lem:SRWestimates}. 
 
\begin{proof}[Proof of Lemma \ref{lem:SRWestimates}]
These estimates follow by  classical heat kernel estimates. For instance, by  \citet[Proposition 2.4.4]{LawlerLimic2010}, we know that, for some $c\in (0,\infty)$, it holds that
\begin{equation}\label{lem:SRWestimatesHelp1}
P_{o,0}^{RW} (Y_{-t} = y) \leq ct^{-d/2}, \quad \text{ for all } y \in \bZ^d \text{ and } t \in \bN.
\end{equation} 
That the bound in  \eqref{lem:SRWestimates2} holds follows by \eqref{lem:SRWestimatesHelp1} and \eqref{lem:SRWestimates1} and, thus, it is sufficient to show the latter estimate. For this, fix $\gamma \in \Gamma_{-\infty}$ and,  for $n \in \{1,\dots,|\gamma|\}$, let 
\begin{equation}
D_n^{(\gamma)}= \{ y \in \bZ^d \colon  P_{o,0}^{RW}(Y_n=y  \mid Y_{i} \neq \gamma_{i} \text{ for all } i \in [-n,-1]) >0\}.
\end{equation} 
Then, as follows e.g.\ by a basic induction argument, we have that 
\begin{equation}
D_n^{(\gamma)}= \{y \in \bZ^d \colon P_{o,0}^{RW}(Y_{-n}=y) >0 \} \setminus \{\gamma_{-n}\}.
\end{equation}
Therefore, by \eqref{lem:SRWestimatesHelp1}, 
\begin{equation}\label{lem:SRWestimatesHelp2}
\lim_{n\rightarrow \infty} \inf_{\gamma \in \Gamma_{-\infty} \colon|\gamma|\geq n} P_{o,0}^{RW}\big(Y_{-n} \in D_n^{(\gamma)}  \big)  = 1 
\end{equation}
Further, since there are only finitely many paths of length $n$ in $\Gamma_{-\infty}$ and $(Y_n)$ has finite range, we have that, for each $n \in \bN$, 
\begin{equation}
\epsilon_1^{(n)} = \inf_{\gamma \in \Gamma_{-\infty}} \inf_{y \in D_n^{(\gamma)}} P_{o,0}^{RW} \big(  Y_{i} \neq \gamma_{i} \text{ for all } i \in [-n,-1], Y_{-n} =y  \big)>0.
\end{equation}

Moreover, again utilising  \eqref{lem:SRWestimatesHelp1}, by a union bound estimate, we have that,
\begin{equation}\label{lem:SRWestimatesHelp3}
\lim_{n\rightarrow \infty} \inf_{\gamma \in \Gamma_{-\infty} \colon |\gamma| \geq n} P_{o,0}^{RW} ( Y_{i} \neq \gamma_{i} \text{ for all } i \in [-|\gamma|,-n])=1
\end{equation}
Now, for any $n \in \{1,\dots, k\}$ where $k=|\gamma|$, we have  that
\begin{align}
P_{o,0}^{RW} &\big( Y_i \neq \gamma_i \text{ for all } i \in [-|\gamma|,-1]\big) 
\\&=   \sum_{y\in \bZ^d} P_{o,0}^{RW} \big(  Y_{-n} =y,  Y_i \neq \gamma_i \text{ for all } i \in [-|\gamma|,-1]\big)   \big) 
\\& \geq \sum_{y\in D_n^{(\gamma)}}  P_{o,0}^{RW} \big(Y_i \neq \gamma_i \text{ for all } i \in [-n,-1], Y_{-n} =y  \big) 
\\ & \qquad \qquad \cdot P_{o,0}^{RW} \big( Y_i \neq \gamma_i \text{ for all } i \in [-k,-n], Y_{-n} =y  \big).
\\& \geq \epsilon_1^{(n)}  P_{o,0}^{RW} \big( Y_i \neq \gamma_i \text{ for all } i \in [-k,-n], Y_{-n} \in D_n^{(\gamma)}  \big). 
\end{align}
From this we conclude that \eqref{lem:SRWestimates1} holds since, by \eqref{lem:SRWestimatesHelp2} and \eqref{lem:SRWestimatesHelp3},  we have that 
 $\inf_{\gamma \in \Gamma_{-\infty} \colon |\gamma| >n}P_{o,0}^{RW} \big( Y_i \neq \gamma_i \text{ for all } i \in [-k,-n], Y_{-n} \in D  \big) >0$ for some $n \in \bN$ sufficiently large. 
\end{proof}

 \section{Uniform mixing for the local environment process} \label{sec:uniformMixing}

This section is devoted to the proof of  Theorem \ref{thm:high}. For this, fix $\gamma  \in \Gamma_{-\infty}$ and $A \in \cA(\gamma)$ and consider the decomposition of $\bQ_A$ obtained from Theorem \ref{thm:domination2} yielding that $\eta^A = \eta^{\gamma^c}+\sum_{j}\delta_{Z^\super{j}}$. We next detail how $\eta^A$ 
can be further disintegrated by restricting to observations along a one-dimensional (random walk) trajectory. Particularly, denote by 
\begin{equation}
\Gamma_{+\infty} \coloneqq \cup_{k \in \bN} \left\{(o,\dots,\gamma_{k}) \colon \gamma_i \in \bZ^d,  \gamma_i-\gamma_{i-1}\in \cR , 0< i\leq k \right\}.
\end{equation}
the set of all finite  forward-in-time trajectories that $(X_t)$ in principle can attain. 
Then, for  $\gamma' \in \Gamma_{+\infty}$ and $t \in \{1,\dots, |\gamma'|-1\}$, let 
\begin{equation} 
\lambda(\gamma,\gamma',t) \coloneqq \lambda P^{RW}_{(\gamma'_t,t)} \left( Y_i \neq \gamma_i' ,Y_j \neq \gamma_j \text{ for all }  i \in \{1,\dots, t\} \text{ and }j \in \{-|\gamma|,-1\}  \right),
\end{equation}
which corresponds  to the Poisson density of not previously observed particles at time $t$ when $(X_t)$ moved along the path $\gamma'$ in the past. 
Moreover, let 
\begin{equation}
\lambda_* \coloneqq \inf \left\{ \lambda(\gamma,\gamma',0) \colon  \gamma \in \Gamma_{-\infty} \right\}.
\end{equation}
Note that $\lambda_*$ does not depend on $\gamma'$ and that $\lambda_*>0$ whenever $d\geq3$ by Lemma \ref{lem:SRWestimates}. 
This quantity represents the density of particles 
 that may be present at the walkers location at time $t$ independently of $\gamma$ and $A$ and any past observation.

 Now, let $Q_1,Q_2,\dots$ be an i.i.d.\ sequence with $Q_1  \sim \Poi(\lambda_*)$. Further, for each $t\in \bN_0$ independently, let $(R_s^{\super{t}})_{s\geq 0}$ be a Poisson process with rate $1$ and consider 
\begin{align}
R_t(\gamma') := R_{\lambda(\gamma,\gamma',t)-\lambda_*}^{\super{t}}, \quad t\geq 0 \text{ and } \gamma'\in \Gamma_{+\infty}
\end{align}
Note that the $R_t(\gamma')$'s are independent $\Poi(\lambda(\gamma',t)-\lambda_*)$-distributed random variables. These represent the additional Poisson noise for a random walk trajectory $\gamma'$ that, in companion with $(Q_t)$ (which is independent of the particular path) is independent of previous observed particles. 

We next introduce the Poisson noise that depends on the actual observed particles. For this, let 
\begin{equation}\label{eq:ftp}
\tau^\super{j}(\gamma')=\inf\{t\geq 0 \colon Z^\super{j}_t = \gamma'_t\}, \quad \gamma'\in \Gamma_{\infty}
\end{equation} denote the first time the particle $Z^\super{j}$ from Theorem \ref{thm:domination2} intersects $\gamma'$.
Moreover, write
\begin{align}
 &S_t(\gamma') = \sum_{j\in \cZ} \ind_{\tau^j(\gamma')=t}, \quad t\geq 0
 \end{align}
 for the number of these that intersects $\gamma'$ for the first time at time $t$. 
    Letting
 \begin{align}
 &N_t(\gamma') = Q_t+R_t(\gamma')+S_t(\gamma'), \quad t\geq 0,
\end{align}
we then have  that $\sum_{s=0}^{t} N_s(\gamma')$ is the total number of different particles that are seen along $\gamma'$ in the time-window $[0,t]$.

\begin{lem}
For any $\gamma'\in \Gamma_{\infty}$ we have that, in law,  
\begin{align}
	\eta^A_t(\gamma_t') &=  
	Q_t+ R_t(\gamma_t') + 
	S_t(\gamma_{[0,t]}') \\&+ \sum_{s=0}^{t-1} \sum_{k=1}^{N_s(\gamma_s')} \delta_{(\gamma_t',t)}(\gamma_s'+W_t^\super{s,k},t),
\end{align}
where $(W^\super{s,i})_{s,k \geq0}$ are centred independent particles started at time $s$. 
\end{lem}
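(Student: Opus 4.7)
The plan is to verify the distributional identity by classifying each particle contributing to $\eta^A$ according to the first time its trajectory coincides with $\gamma'$ on the window $[0,t]$, and then propagating each such particle forward to time $t$ using the (strong) Markov property of the underlying random walk. Starting from Theorem \ref{thm:domination2}, one writes $\eta^A = \eta^{\gamma^c}\oplus\sum_{j\in \cZ}\delta_{Z^{\super{j}}}$, with the two pieces independent conditionally on $\bfy$ by Property \ref{enum:independence}. Any particle contributing to $\eta^A_t(\gamma'_t)$ must first meet $\gamma'$ at some time $s\in\{0,\ldots,t\}$, necessarily at site $\gamma'_s$, so the count decomposes naturally according to this first intersection time.

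For the $\eta^{\gamma^c}$-piece, the first step is Poisson thinning: classify trajectories by their first intersection time $s$ with $\gamma'$. By the very definition of $\lambda(\gamma,\gamma',s)$, the intensity of trajectories first meeting $\gamma'$ at site $\gamma'_s$ at time $s$ equals exactly $\lambda(\gamma,\gamma',s)$, since such a trajectory passing through $(\gamma'_s,s)$ must avoid $\gamma$ on $[-|\gamma|,-1]$ and avoid $\gamma'$ on $\{1,\ldots,s-1\}$. Using the decomposition $\lambda(\gamma,\gamma',s) = \lambda_* + (\lambda(\gamma,\gamma',s)-\lambda_*)$ and Poisson superposition, this sub-Poisson process is realised as $Q_s + R_s(\gamma'_s)$: $Q_s$ is path-independent (hence common to all $\gamma'$), while $R_s(\gamma'_s)$ captures the path-dependent excess. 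For the $\sum_{j}\delta_{Z^{\super{j}}}$-piece, the particles first hitting $\gamma'$ at time $s$ are exactly those with $\tau^{\super{j}}(\gamma')=s$, contributing $S_s(\gamma')$ by definition. Thus the total number of particles first observed along $\gamma'$ at time $s$ is $N_s(\gamma'_s) = Q_s + R_s(\gamma'_s) + S_s(\gamma')$, and Poisson thinning combined with Theorem \ref{thm:domination2}, Property \ref{enum:independence}, provides the required independence across distinct values of $s$.

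Once a particle is first observed at $(\gamma'_s,s)$, the strong Markov property of the centered random walk implies that its evolution on $[s,\infty)$ is an independent centered random walk started from the origin at time $s$, which we label $W^{\super{s,k}}$ for $k=1,\ldots,N_s(\gamma'_s)$. The particle is then located at $\gamma'_s + W_t^{\super{s,k}}$ at time $t$. For $\eta^{\gamma^c}$-particles this is immediate; for the $Z^{\super{j}}$-particles one additionally invokes Theorem \ref{thm:domination2}, Property \ref{enum:rw}(c), by which the conditioning event $D_j$ is confined to $[-|\gamma|,-1]$ and hence does not affect the trajectory after time $0$. Gathering the contributions first observed at time $t$ itself, which amount to $Q_t + R_t(\gamma'_t) + S_t(\gamma')$, and those propagated from earlier times $s<t$ (each contributing an indicator that $\gamma'_s + W_t^{\super{s,k}}=\gamma'_t$) yields exactly the right-hand side of the displayed formula.

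The main obstacle is verifying the joint independence structure: that the triples $(Q_s,R_s(\gamma'_s),S_s(\gamma'))_{s\leq t}$ and the propagation walks $(W^{\super{s,k}})_{s,k}$ are all mutually independent, and that each $W^{\super{s,k}}$ is indeed an unconditional centered random walk. This is achieved by combining Poisson thinning (for $Q_s,R_s$ and the unconditional post-first-hit behaviour of $\eta^{\gamma^c}$-particles) with the conditional independence from Theorem \ref{thm:domination2}, Property \ref{enum:independence} (for $S_s$ and the $Z^{\super{j}}$'s), together with the structural constraint from Theorem \ref{thm:domination2}, Property \ref{enum:rw}(c), that $D_j$ is a deterministic function of $\bfy$ whose event is confined to $[-|\gamma|,-1]$, so that the post-first-hit walk is genuinely unconditioned.
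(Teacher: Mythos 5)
Your proposal is correct and takes essentially the same route as the paper, whose proof is a one-line assertion that the identity holds ``merely by construction, utilising Theorem \ref{thm:domination2}'' plus the Markov property for the already-observed particles. You have simply filled in the details of that construction --- the Poisson thinning of $\eta^{\gamma^c}$ by first intersection time with $\gamma'$ (yielding $Q_s+R_s$), the classification of the $Z^{\super{j}}$'s via $\tau^{\super{j}}$ (yielding $S_s$), and the use of Property \ref{enum:rw} to see that the conditioning does not affect the post-first-hit evolution --- so there is no gap and no genuine divergence in method.
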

\begin{proof}
This follows merely by construction, utilising Theorem \ref{thm:domination2}. Note particularly that the latter sum 
represents particles which have been observed in the time-window $[0,t-1]$ which, by the Markov property, are independent of the conditioning and thus evolve as independent simple random walks.
\end{proof}

With the new description of $(\eta^A_t)$ along a possible random walk trajectory, we are ready to construct the local environment process. For this, 
extend the model to include  additionally two independent i.i.d.\ sequences of random variables,  $(W_j^\super{i})_{j \geq 1}$ with $i=0,1$, having marginal  laws given by $\alpha(i,\cdot)$, respectively. 
Then, we define the two random walks $(X_t^\super{\gamma^c})$ and $(X_t^\super{A})$ iteratively as follows. Firstly, we set $X_0^\super{\gamma^c}=X_0^\super{A} = o$ and then, for $t\geq 0$, we let 
\begin{align}
&X_{t+1}^\super{\gamma^c} - X_{t}^\super{\gamma^c} = \ind_{\eta^{\gamma^c}(\{(X_t,t)\})=0} W^\super{0}_{t} + \ind_{\eta^{\gamma^c}(\{(X_t,t)\})>0} W^\super{1}_{t}
\\ &X_{t+1}^\super{A} - X_{t}^\super{A} =  \ind_{\eta^{A}(\{(X_t,t)\})=0} W^\super{0}_{t} + \ind_{\eta^{A}(\{(X_t,t)\})>0} W^\super{1}_{t}
\end{align}
Hence, $(X_t^\super{\gamma^c})$ evolves as the random walk $(X_t)$ in the environment provided by $\bQ_{\gamma^c}$, whereas $(X_t^\super{A})$ evolves as $(X_t)$ in the full environment provided by $\bQ_{A}$. 

Utilising this coupling construction, we next present the key lemma for the proof of  Theorem \ref{thm:high}. 
For this, we write  $Q_{[0,T]}\geq 1$ for the event that $Q_n\geq 1$ for each $n \in [0,T]$. 


\begin{lem}\label{lem:trick1RWRW}
Consider the RWRW on $\bZd$, $d\geq 5$, with density $\lambda>0$. Then there is $C <\infty$ such that, and for any $n\in \bN$,
\begin{equation}\label{eq:RWRWd5}
 \sup_{\substack{A \in \cA(\gamma)\\ \gamma \in \Gamma_{-\infty}}} \norm{ \bQ_{A}(\xi_{(n,\infty)} \in \cdot  \mid Q_{[0,n]} \geq 1)-\bQ(\xi_{(n,\infty)} \in \cdot \mid Q_{[0,n]} \geq 1) }_{TV} \leq C n^{-\frac{d}{2}+2}.
\end{equation}
\end{lem}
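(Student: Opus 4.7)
The plan is to construct a single coupling that realises $\bQ_A$, $\bQ_{\gamma^c}$, and $\bQ$ on one probability space, and bound the total variation distance by the coupling failure probability at times in $(n,\infty)$. By Theorem \ref{thm:domination2} we have $\bQ_A=\bQ_{\gamma^c}\bigoplus \cL(\sum_{i\in\cZ}\delta_{Z^\super{i}})$, and by Poissonian thinning $\bQ=\bQ_{\gamma^c}\bigoplus \bQ_\gamma$ where $\bQ_\gamma$ is the independent Poisson point process of trajectories through $\gamma$. I couple by sharing the $\eta^{\gamma^c}$-component and the walker's jumping randomness $(W^\super{0}_j,W^\super{1}_j)_j$, and sampling the two ``extra'' particle families independently.

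The key observation is that on $\{Q_{[0,n]}\geq 1\}$ at least one universal $Q$-particle sits at the walker's position at every step $s\in[0,n]$, forcing $\omega_s(X_s)=1$ in all three environments. Hence all three coupled walkers agree with $(X_t^{\gamma^c})$ throughout $[0,n]$ and share the same position at time $n$. For $t>n$, $(X_t^A)$ departs from $(X_t^{\gamma^c})$ only at a time when some $Z^\super{i}$ lands on the walker's position without a $\gamma^c$-particle also being there; in particular the local environment processes under $\bQ_A$ and $\bQ_{\gamma^c}$ coincide on $(n,\infty)$ as soon as no $Z^\super{i}$ meets $(X_t^{\gamma^c},t)$ for any $t>n$, and analogously for $\eta_\gamma$-particles against $\bQ$. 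By the triangle inequality, the lemma therefore reduces to establishing
\[
\bQ_A\big(\exists\,t>n,\,i\in\cZ\colon Z^\super{i}_t=X_t^{\gamma^c}\;\big|\;Q_{[0,n]}\geq 1\big)\leq Cn^{2-d/2},
\]
together with a parallel estimate for the $\eta_\gamma$-trajectories.

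For the displayed bound I apply the union bound. Property \ref{enum:independence}(b) of Theorem \ref{thm:domination2} says that $\eta^{\gamma^c}$, and hence $(X_t^{\gamma^c})$, is independent of $(\bfy,\cZ,(Z^\super{i})_i)$, and the event $\{Q_{[0,n]}\geq 1\}$ lives in a further independent component. Conditioning on $\bfy$, on the anchor points $(z_i)$, and on the walker path, Property \ref{enum:bounds}(b) yields $\bQ_A(Z^\super{i}_t=X_t^{\gamma^c}\mid\bfy,z_i)\leq C(t-z_i)^{-d/2}$; summing over $t>n$ gives $C(n-z_i)^{1-d/2}$. Grouping indices by anchor $z_i=-s$ and applying the stochastic domination $|\cZ_{-s}|\preceq \Poi(\lambda)+n_0$ from Property \ref{enum:domination}, with $n_0=n_0(\lambda)$ a constant, taking expectations yields
\[
\sum_{s=1}^{|\gamma|}(\lambda+n_0)\,C(n+s)^{1-d/2}\leq C'n^{2-d/2},
\]
where the assumption $d\geq 5$ is used precisely so that $\sum_{s\geq 1}(n+s)^{1-d/2}=O(n^{2-d/2})$. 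The parallel estimate for $\eta_\gamma$ follows by the same argument, using Lemma \ref{lem:SRWestimates} in place of Property \ref{enum:bounds}(b) to bound the intensity of $\gamma$-trajectories at $(x,t)$ by an analogous sum.

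The main obstacle is to make sure that the coupling delivers uniform-in-$(\gamma,A)$ bounds and that the independence of $(X_t^{\gamma^c})$ from the extra particles survives the conditioning on $\{Q_{[0,n]}\geq 1\}$. Both reduce to the fact that the $Q$-field, the $\eta^{\gamma^c}$-field, and the extra particle families sit in separate independent pieces of the underlying probability space by Theorem \ref{thm:domination2}(\ref{enum:independence}); once this is carefully set up, the remaining step is the routine summation displayed above.
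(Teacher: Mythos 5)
Your proposal is correct and follows essentially the same route as the paper: couple $\bQ_A$ and $\bQ$ through the common component $\bQ_{\gamma^c}$ and the shared jump variables, use the conditioning $Q_{[0,n]}\geq 1$ to force agreement of the walkers on $[0,n]$, and then bound the probability that any extra particle (a $Z^{(i)}$ from Theorem \ref{thm:domination2}, or a $\bQ_\gamma$-trajectory) meets the common path after time $n$ via the union bound, Property \ref{enum:bounds}(b), and the stochastic domination of $(|\cZ_{-s}|)_s$, yielding the sum $\sum_{s\geq 1}(n+s)^{1-d/2}=O(n^{2-d/2})$. The only cosmetic difference is that you condition on the walker's path directly where the paper sums over deterministic forward paths $\gamma'$ weighted by their probabilities; these are equivalent.
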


Before proving this lemma, we show how it implies Theorem \ref{thm:high}.

 \begin{proof}[Proof of Theorem \ref{thm:high}]
Fix $\gamma_0,\gamma_1 \in \Gamma_{-\infty}$ and $A_0 \in \cA(\gamma_0)$ and $A_1\in \cA(\gamma_1)$.  
Let $T'=C\log(T)$ and  set \begin{equation} \tau^Q=\inf\{t\geq 0: Q_{[t,t+T']}\geq 1\}.\end{equation} Since the $Q's$ are i.i.d.\ $\Poi(\lambda_*)$ we can choose $C$ so that, 
\begin{equation}\bP(\tau^Q<T-T')\geq 1-e^{cT}\end{equation} for some $c>0$ and all $T$ large. Therefore, for any possible cylinder event $B$ of $\xi_{(n,\infty)}$, we have that 
\begin{align}
| \bQ_{A_0}(\xi_{(T,\infty)}& \in B  )-\bQ_{A_1}(\xi_{(T,\infty)} \in B )|
\\ \leq \sum_{s=0}^{T-T'-1} \bP(\tau^Q=s) &|\bQ_{A_0}(\xi_{(T,\infty)} \in B  \;|\;\tau^Q=s)-\bQ_{A_1}(\xi_{(T,\infty)} \in B \; |\;\tau^Q=s)| 
\label{eq:tau-Q-split}\\ +2e^{-cT}.
\end{align}
Moreover, by performing a shift so that $\tau^Q$ is the new time 0,
\begin{align}
&\bQ_{A_0}(\xi_{(T,\infty)} \in B  \;|\;\tau^Q=s)	\\
&=\bE_{A_0}\left[\bQ_{\tilde{A_0}}\left(\xi_{(T-\tau^Q,\infty)} \in B\;\middle|\;Q_{[0,T')}\geq 1\right) \;\middle|\;\tau^Q=s	\right],
\end{align}
where $\tilde{A_0}$ is the extension of $A_0$ also containing the observations of $(\xi_t)$ up to time $s$. 
With the analogous statement for $\bQ_{A_1}$ and by applying Lemma \ref{lem:trick1RWRW} and the triangular inequality, we can thus upper bound \eqref{eq:tau-Q-split} by
\begin{align}
 2C(T')^{-\frac d2 +2} + 2e^{-cT} = C_1(\log(T))^{-\frac d2 + 2} + 2e^{-cT}
\end{align}
for some new constant $C_1<\infty$, which completes the proof. 
\end{proof}
 
 \begin{rem}\label{rem:trick}
 The above proof is similar to the proof of \cite[Theorem 2.2]{BethuelsenVolleringRWDRE2022}, however, the particular condition on the event that $Q_{[0,T]}\geq 1$ in Lemma \ref{lem:trick1RWRW} is different from that of \cite[Lemma 3.1]{BethuelsenVolleringRWDRE2022}. 
 \end{rem}

\begin{proof}[Proof of Lemma \ref{lem:trick1RWRW}]
Firstly, we claim that there is a $C<\infty$ so that, for any forward-path $\gamma' \in \Gamma_{\infty}$, uniformly in $\gamma$ and $A$, $n$, $\gamma'$ and $\bfy$, it holds that 
\begin{equation}
\bQ_A( \exists i\in\cZ: Z^\super{i}_t = \gamma'_t \text{ for some } t>n \mid \bfy )
\leq \sum_{t=n}^{|\gamma'|}\sum_{s=1}^{|\gamma|} |\cZ_{-s}| C(t+s)^{-\frac d2 },
\end{equation} 
Indeed, this follows merely by the union bound and the random walk bounds from Theorem \ref{thm:domination2}, Property \eqref{enum:bounds}. Consequently, since the $\cZ_{-s}'s$ are stochastically dominated by $N + Z_s$ for some large $N \in \bN$ and  independent $Poi(\lambda)$-distributed random variables  $(Z_s)$, we conclude that
\begin{equation}\label{eq:lem:trick1RWRWbound}
\bQ_A( \exists i\in\cZ: Z^\super{i}_t = \gamma'_t \text{ for some } t>n)
\leq  \tilde{C}n^{-\frac d2 +2},
\end{equation}
for some constant $\tilde{C}<\infty$, again not depending on $\gamma', \gamma, A$ nor $n$.

Consider now the above coupling construction of both the environment and the random walks $(X_t^\super{\gamma^c})$ and $(X_t^\super{A})$ for some $\gamma\in \Gamma_{-\infty}$ and $A \in \cA(\gamma)$. 
Then, under the condition that $Q_{[0,n]}\geq 1$, in the time-window $[0,n]$ they necessarily make the same jumps given by the $(W_j^\super{1})_{j=0}^n$ random variables.  Moreover, since both $(Q_i)_{i=0}^n$ and $(W_j^\super{1})_{j=0}^n$ are independent of all other components in the construction, for this did not reveal any information about the environment in in this time-window. Now, for any $m\in \bN$, let
\begin{equation}
\tau_m :=\inf\{t\in \{n+1,\dots,n+m\} \colon \eta^A(\{(X_t^\super{A},t)\})\neq \eta^{\gamma^c}(\{(X_t^\super{\gamma^c},t)\}), Q_t=0 \}
\end{equation} be the  time of the potentially first observed difference in the time interval $[n+1,n+m]$.  
By construction we have that
\begin{align}
&|\bQ_{A}(\xi_{(n,n+m)} \in \cdot  \mid Q_{[0,n]} \geq 1)-\bQ(\xi_{(n,n+m)} \in \cdot \mid Q_{[0,n]} \geq 1)|
 \\ \leq &\bQ_{A} ( \tau_m <\infty | Q_{[0,T]}\geq 1).
 \end{align}
 Moreover, using the independence properties of the construction and the random walk bounds, we have that
 \begin{align}
 &\bQ_{A} ( \tau_m<\infty | Q_{[0,n]}\geq 1)
\\ & = \sum_{\substack{\gamma' \in \Gamma_{+\infty} \\ |\gamma'| =n+m}} \bQ_{A} ( W_{[0,n]}^{\super{1}}= \gamma_{[0,n]}', X_{[0,n+m]}^\super{\gamma^c}= \gamma', \tau_m <\infty \mid Q_{[0,n]}\geq 1)
\\ & =  \sum_{\substack{\gamma' \in \Gamma_{+\infty} \\ |\gamma'| =n+m}}\bQ_{A} ( \tau_m<\infty \mid W_{[0,n]}^{\super{1}}= \gamma_{[0,n]}', X_{[0,n+m]}^\super{\gamma^c}= \gamma', Q_{[0,n]}\geq 1)
\\ &\qquad \qquad \qquad \cdot \bQ_A(W_{[0,n]}^{\super{1}}= \gamma_{[0,n]}', X_{[0,n+m]}^\super{\gamma^c}= \gamma' \mid Q_{[0,n]}\geq 1)
\\ & \leq  \sum_{\substack{\gamma' \in \Gamma_{+\infty} \\ |\gamma'| =n+m}}\bQ_A( \exists i\in\cZ: Z^\super{i}_t = \gamma'_t \text{ for some } t= n, \dots, n+m)  
\\ &\qquad \qquad \qquad \cdot \bQ_A (W_{[0,n]}^{\super{1}}= \gamma_{[0,n]}', X_{[0,n+m]}^\super{\gamma^c}= \gamma' \mid Q_{[0,n]}\geq 1)
\\ & \leq  \sum_{\substack{\gamma' \in \Gamma_{+\infty} \\ |\gamma'| =n+m}} \tilde{C}n^{-\frac d2 +2} \cdot \bQ_A (W_{[0,n]}^{\super{1}}= \gamma_{[0,n]}', X_{[0,n+m]}^\super{\gamma^c}= \gamma' \mid  Q_{[0,n]}\geq 1)
\end{align}
where we in the last inequality applied the bound \eqref{eq:lem:trick1RWRWbound}. Consequently, since the latter sum equals $1$, we have that, for some $\tilde{C}<\infty$,  uniform in all parameters
\begin{align}
|\bQ_{A}(\xi_{(n,n+m)} \in \cdot  \mid Q_{[0,n]} \geq 1)-\bQ(\xi_{(n,n+m)} \in \cdot \mid Q_{[0,n]} \geq 1)| \leq \tilde{C}n^{-\frac d2 +2}.
\end{align}
From this we conclude the statement of  Proposition \ref{lem:trick1RWRW} by use of the triangular inequality.
   \end{proof}
   
We close this subsection with providing the statement alluded to in Remark \ref{rem:MixingInO}. 
For this, consider the fields of random walks on $\bZd$, with density $\lambda>0$ and fix a bi-infinite random walk trajectory, say $\gamma$. Further, let $(Z_i)_{i \in \bZ}$ be the $\{0,1\}$-process which records whether there is a particle or not along this trajectory. That is, we set
\begin{equation}
Z_i= \omega_i(\gamma_{i}), \quad i \in \bZ 
\end{equation}
Then, by reasoning along the lines as in the proof of Lemma \ref{lem:trick1RWRW}, we have that, whenever $d\geq 3$, there is a constant $C <\infty$ such that, for any $n,m,l\in \bN$ and $\sigma \in \{0,1\}^{\bZ}$,
\begin{equation}
|\bQ( Z_{[n,n+m]} \equiv \sigma_{[n,n+m]}  | Z_{[-l,0]} \equiv \sigma_{[-l,0]} ) - \bQ( Z_{[n,n+m]} \equiv \sigma_{[n,n+m]} )| \leq C\sum_{l=n}^{n+m} l^{-\frac{d}{2}+1}.
\end{equation}
In particular, for $d\geq 5$, we obtain that 

\begin{cor}\label{prop:mixing_along_d_path}
Let $d\geq 5$. Then there is a $C <\infty$ such that 
\begin{equation}\label{eq:fRWs5}
\sup_{\sigma \in \{0,1\}^{\bZ}} \sup_{l \in \bN} \norm{ \bQ( Z_{[n,\infty)}  \in \cdot   | Z_{[-l,0]} \equiv \sigma_{[-l,0]} ) - \bQ( Z_{[n,\infty)} \in \cdot ) }_{TV} \leq Cn^{-\frac{d}{2}+2}.
\end{equation}
\end{cor}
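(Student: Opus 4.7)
The plan is to adapt the coupling strategy from the proof of Lemma~\ref{lem:trick1RWRW}, which is noticeably simpler here because the observation path $\gamma$ is deterministic rather than a random trajectory of $(X_t)$. First, I would view the conditioning event $\{Z_{[-l,0]}\equiv \sigma_{[-l,0]}\}$ as an element $A$ of $\cA(\gamma_{[-l,0]})$ in the sense of \eqref{eq Akm}, and apply Theorem~\ref{thm:domination2} to decompose $\bQ_A = \bQ_{\gamma^c} \bigoplus \cL\bigl(\sum_{i\in\cZ}\delta_{Z^{(i)}}\bigr)$. The extra particles $Z^{(i)}$ have anchor points $z_i\in\{-l,\ldots,-1\}$ on $\gamma$; their positions satisfy the heat kernel bound $\bQ_A(Z^{(i)}_t = x \mid \bfy, z_i) \leq C(t-z_i)^{-d/2}$ of Property~\ref{enum:bounds}(b), and the anchor counts $|\cZ_{-s}|$ are controlled by Property~\ref{enum:domination}.

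The key step is to couple $\bQ$ and $\bQ_A$ so that they share the same realisation of $\eta^{\gamma^c}$; this is possible because under both measures the portion of the Poisson field consisting of trajectories not intersecting $\gamma_{[-l,0]}$ has law $\bQ_{\gamma^c}$ and is independent of the remainder. Under this coupling the observations $Z_t = \omega_t(\gamma_t)$ for $t\geq n$ coincide in the two measures unless some particle outside of $\eta^{\gamma^c}$ (i.e.\ one whose trajectory intersects $\gamma_{[-l,0]}$) also visits $\gamma_t$ for some $t\geq n$. The $TV$ distance is therefore bounded by the probability of this bad event, which is independent of how far into the future we look and hence controls the full infinite process $Z_{[n,\infty)}$.

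To estimate the bad event under $\bQ_A$, I would apply the union bound together with Properties~\ref{enum:bounds}(b) and~\ref{enum:domination}(b), taking expectations of the heat kernel bound against the dominating Poisson anchor counts, which yields an upper bound of order $\sum_{t\geq n}\sum_{s=1}^{l}(t+s)^{-d/2}$. The analogous estimate under $\bQ$ itself is even cleaner: the particles in $\bQ_\gamma$ form a Poisson point process whose intensity for trajectories having anchor $-s$ and reaching $\gamma_t$ at time $t$ is bounded by $C\lambda(t+s)^{-d/2}$ via Lemma~\ref{lem:SRWestimates}, and the same double sum arises. For $d\geq 3$ the inner sum is comparable to $t^{1-d/2}$, and for $d\geq 5$ the outer sum over $t\geq n$ converges to $O(n^{2-d/2})$, producing the claimed bound $C n^{-d/2+2}$.

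The main technical nuisance I anticipate is aligning the two couplings so that a single bad event simultaneously controls both sides of the $TV$ distance, i.e.\ treating the contribution of the $Z^{(i)}$'s under $\bQ_A$ and of $\bQ_\gamma$ under $\bQ$ symmetrically. Once that is set up the heat kernel and Poisson computations are routine, and the convergence of the double sum uses precisely the hypothesis $d\geq 5$.
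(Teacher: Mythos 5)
Your proposal is correct and follows essentially the same route as the paper, which proves the corollary precisely by "reasoning along the lines of the proof of Lemma \ref{lem:trick1RWRW}": decompose via Theorem \ref{thm:domination2}, couple through the shared $\eta^{\gamma^c}$ component, bound the total variation distance by the probability that a particle intersecting $\gamma_{[-l,0]}$ revisits the path after time $n$, and sum the heat-kernel bounds against the dominating Poisson anchor counts to get $\sum_{t\geq n} t^{1-d/2} = O(n^{2-d/2})$. You also correctly identify the simplification over Lemma \ref{lem:trick1RWRW} (the path is deterministic, so no ellipticity/$Q_{[0,n]}\geq 1$ device is needed), which is exactly the point of Remark \ref{rem:MixingInO}.
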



\section{Decomposition of conditional point processes
}\label{sec:cpp}

Assume given $\gamma  \in \Gamma_{-\infty}$  and $A \in \cA(\gamma)$ and recall the description of the distribution $\bQ_{A}$ from Subsection \ref{sec:SDfRW} As detailed therein, we can decompose this measure as $\bQ_A = \bQ_{\gamma^c}  \bigoplus \bQ_{A,\gamma}$ where $\bQ_{\gamma^c}$ constitutes all random walk paths that do not intersect $\gamma$ and $\bQ_{A,\gamma}$ those which do.  By the Poissonian construction, only the latter depends on the conditional event $A$.  The purpose of this section is to present the proof of Theorem \ref{thm:domination2} that provides a detailed description of $\bQ_{A}$ by means of a further decomposition of $\bQ_{A,\gamma}$.

In the first subsection, we present a re-formulation that maps the problem at hand to the study of distributions that live on a space of finite and ordered elements. Using this, since we believe the underlying arguments can be of interest well beyond the scope of this paper, we there-next cast the problem in the more general setting  of conditional Poisson point processes. In the then following subsection, we present a first decomposition result that, combined with a basic stochastic domination property for the Poisson distribution,  comes close to conclude Theorem \ref{thm:domination2}, but not quite. A  finer coarsening is needed, as well as  a way to undo the coarsening,  as presented in the remaining subsections, again building on properties of the Poisson distribution. The coarsening allows us to iterate the decomposition and, utilising all the above mentioned ideas,  to conclude the proof of Theorem \ref{thm:domination2}.

\subsection{A first re-formulation}
In order to gain deeper understanding of $\bQ_{A,\gamma}$, we first re-formulate the problem at hand. For this, let $n=|\gamma|$ and set $I = \{0,1\}^n$ which we  equip with the lexicographic order given by  $(x_1,\dots,x_n) < (y_1,\dots, y_n)$ if, for some $k \in \{1,\dots,n-1\}$, $x_i = y_i$ for all $i \leq k$ and $x_k < y_k$. Note that $I$ is in fact then an ordered set.  Further, let $\sigma \in I$ be such that $A = A(\gamma,\sigma) \in \cA(\gamma)$, i.e.\ $\sigma$ is the observation along the path $\gamma$. 
 
 We denote by $O=O(\sigma) \coloneqq \{ i \in \{1,\dots,n\} \colon \sigma_i=1\}$ and $V = V(\sigma) \coloneqq \{ i \in \{1,\dots,n\} \colon \sigma_i=0\}$. Then, letting for each $x \in  I$, 
\begin{equation}
\{X \in x\} := \{ X_{-t} = \gamma_{-t} \: \forall \; t \in O \}\cap \{X_{-t} \not= \gamma_{-t} \: \forall \; t \in V \},
\end{equation}
where $X$ is a bi-infinite random walk path on $\bZd$ representing the evolution of a particle in the environment.
Particularly, with $X$ distributed as the independent particles in \eqref{eq:environment}, the corresponding intensity of random walk particles under $\bQ_{A,\gamma}$ is given by 
\begin{align}
\lambda_x =
\begin{cases}
\lambda \sum_{z\in\bZ^d} P^{RW}(X \in X \;|\; X_0=z),\quad &X(i)=0 \ \forall\; i\in V;\\
0,& \text{otherwise},
\end{cases}
\end{align}
Next, let  $(N_x)_{x \in I}$  be independent Poisson($\lambda_x$) random variables and denote the law of these by $\bP_I$. This law equals that of the projection of $Q_{\gamma}( \cdot | \omega_{i}(\gamma_{-i}) =0, i \in V)$ onto $\gamma$.   To take the full conditioning on $A$ into account, we let 
\begin{align}
C_{i} \coloneqq \{x \in I : x_i = 1, x_j = 0 \: j \in V\}, \text{ for } i \in O
\end{align}
and write $\cC \coloneqq \bigcap_{i\in O} \cC_i$, where $ \cC_i \coloneqq \{\sum_{x\in C_i\cap I}N_x>0\}$. 
Observe that the distribution $\bP_I(\cdot \;|\; \cC)$ stands in one-to-one correspondence with the projection of $Q_{\gamma,A}( \cdot )$ onto $\gamma$. Important to the following, what we have obtained from this re-formulation, is a distribution that takes values in a space of finite and ordered elements.

\subsection{A generalization and a first decomposition}

We now cast the problem of decomposing $\bQ_A$ in a slightly more general setting. 
As above, let $I$ be an ordered index set and let $(N_x)_{x\in I}$  be independent Poisson($\lambda_x$) random variables with $\lambda_x\geq 0$ and $\sup_{x\in I} \lambda_x  <\infty$. We denote the law of these by $\bP_I$, and the law of a subset of these by $\bP_J$, where $J\subset I$ is the index subset. Further, let $C_1, C_2, ..., C_K$ be subsets of $I$, which we call the constraint index sets,  and set $\cC_i=\{\sum_{x\in C_i}N_x>0\}$.  We write $\cC= \bigcap_{i=1}^K \cC_i$ for the collection of all the constraints, and we throughout assume that $\bP_I(\cC)>0$, that is, the constraints can be satisfied. This holds for the particular model of a  Poissonian field of random walks but, in general, this is only automatic if $\lambda_x>0$ for all $x\in I$.

Before presenting our first result, we need to introduce some more notation.  Let $M = I\cap\bigcup_i C_i$ be the set of indices relevant for the constraints, and set $M^c = I\setminus M$.  Further, for $y\in I$, let 
$I_{<y}:=\{x\in I: x<y\}$ and define $I_{> y}$, $I_{\leq y}$ and $I_{\geq y}$ similarly. 
Moreover, let 
 $C_i^y:=C_i\cap I_{<y}$ be the $y$-reduced constraint index sets and, for $i =1,\dots, K$, denote 
the corresponding constraints by $\cC_i^y=\{\sum_{x\in C^y_i}N_x>0\}$. We  write $\cC^y:=\bigcap_{i\in 1,...,K, y \not\in C_i}\cC_i^y$ for the event corresponding to constraints that still are not satisfied in case $N_y>0$ and do depend on indexes $x < y$.

The first lemma gives a decomposition of $\bP_I(\cdot | \cC)$. To state it precisely, let  $Y$ be the random variable that outputs the largest index of a Poisson random variable which helps to satisfy the constraints, i.e.\ 
\begin{equation}
Y =\max\{x \in M : N_x>0\}.
\end{equation}
Moreover, for a cylinder event $B=\bigtimes_{x\in I} B_x$ with $B_x \subset \bN_0$ and $J\subset I$, we write $B_J:= \bigtimes_{x\in J} B_x$.

\begin{lem}\label{lemma:decomposition}
Let $B=\bigtimes_{x\in I} B_x$ be a cylinder event. 
Then, for any constraint set $\cC$, it holds that $\bP_I(B \;|\; \cC)$ equals 
\begin{align}\label{eq:decomposition}
&\prod_{x\in M^c} \bP_x(B_x) \sum_{y \in M} \bP_I(Y=y \;|\; \cC) \prod_{x\in I_{>y}\cap M} \ind_{0\in B_x} \\
&\quad\cdot \bP_y(N_y \in B_y \;|\; N_y \geq 1) \bP_{I_{<y}\cap M}( B_{I_{<y}\cap M} \;|\;\cC^y).
\end{align}
\end{lem}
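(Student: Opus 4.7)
My plan is to prove the decomposition by conditioning on the random variable $Y$, which is $\bP_I(\cdot \mid \cC)$-almost surely well-defined since $\cC$ forces at least one $N_x$ with $x \in M$ to be positive. The tower property gives
\[ \bP_I(B \mid \cC) = \sum_{y \in M} \bP_I(B \mid Y = y, \cC)\, \bP_I(Y = y \mid \cC), \]
which already matches the outer sum on the right-hand side of \eqref{eq:decomposition}; the remaining task is to verify the factorization of $\bP_I(B \mid Y = y, \cC)$.

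The first reduction is to separate the indices in $M^c$ from those in $M$. Since the Poissons $(N_x)_{x \in M^c}$ do not appear in any constraint $C_i$ and are independent of $(N_x)_{x \in M}$, they are independent of the pair $(Y, \cC)$, so the cylinder event $B$ factorizes across $I = M^c \cup M$ and produces the factor $\prod_{x \in M^c} \bP_x(B_x)$.

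For the $M$-part conditional on $\{Y = y\} \cap \cC$, the plan is to rewrite this event explicitly as
\[ \{Y = y\} \cap \cC = \{N_x = 0 \text{ for all } x \in M \cap I_{>y}\} \cap \{N_y \geq 1\} \cap \cC^y, \]
using that (i) by definition of $Y$ no $N_x$ with $x \in M$, $x > y$ can be positive, (ii) every constraint $\cC_i$ with $y \in C_i$ is automatically satisfied by $\{N_y \geq 1\}$, and (iii) for constraints with $y \notin C_i$ indices in $M \cap I_{>y}$ contribute nothing, so $\cC_i$ reduces to $\cC_i^y$. Since these three events live on the disjoint blocks $M \cap I_{>y}$, $\{y\}$, and $M \cap I_{<y}$, the original independence of the Poisson family forces the conditional law to factorize as a product, giving exactly the three remaining factors $\prod_{x \in I_{>y} \cap M} \ind_{0 \in B_x}$, $\bP_y(N_y \in B_y \mid N_y \geq 1)$, and $\bP_{I_{<y} \cap M}(B_{I_{<y} \cap M} \mid \cC^y)$ in \eqref{eq:decomposition}.

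The main technical point — and the only genuine obstacle — is the careful bookkeeping in the decomposition of $\{Y = y\} \cap \cC$ and the subsequent justification that conditioning a product measure on a product event over disjoint coordinate blocks preserves independence of those blocks. This is standard for independent families but must be checked against the order structure on $I$ used to define $Y$, in particular separating the constraints $\cC_i$ into those containing $y$ (which are saturated by $N_y$) and those not containing $y$ (which reduce to $\cC_i^y$). No probabilistic estimates are needed; the statement is a purely combinatorial/measure-theoretic rearrangement.
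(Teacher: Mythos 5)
Your proposal is correct and follows essentially the same route as the paper's proof: both split off the unconstrained coordinates in $M^c$ by independence, condition on the maximal positive index $Y$, and use the identity $\{Y=y\}\cap\cC=\{N_x=0\ \forall x\in M\cap I_{>y}\}\cap\{N_y\geq 1\}\cap\cC^y$ together with independence across the disjoint blocks $M\cap I_{>y}$, $\{y\}$, $M\cap I_{<y}$ to factorize the conditional law. The only difference is the order in which the $M^c$-split and the conditioning on $Y$ are performed, which is immaterial.
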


\begin{proof}
First, we can split of all the Poisson random variables which are not part of the conditioning, which by independence yields that 
\begin{align}
\bP_I(B \;|\; \cC) 
&= \left(\prod_{x\in M^c} \bP_x(B_x)\right) \bP_M(B_M \;|\;\cC).
\end{align}
To control the latter term, we next find the largest index $Y$ for which one of the Poisson random variables on which we condition is positive, which gives us 
\begin{align}
&\bP_M(B_M \;|\;\cC) = \sum_{y \in M} \bP_M(Y=y \;|\; \cC) \bP_M(B_M \;|\; Y=y, \cC)   \\
&= \sum_{y \in M} \bP_M(Y=y \;|\; \cC) \left( \prod_{x\in I_{>y}\cap M} \ind_{0\in B_x} \right) \bP_M(B_{I_{\leq y}\cap M} \;|\; Y=y, \cC).
\end{align}
Here, in the second equality, we have again applied the independence property from the Poisson construction and used that, by definition of $Y$ as the maximum, $N_{>y}=0$ in the sense that $\cap_{y \in I_{>y}} \{N_y =0\}$ holds. Finally, we observe that,
\begin{align}
&\bP_M(B_{I_{\leq y}\cap M} \;|\; Y=y, \cC)
\\ = &\bP_M(B_{I_{\leq y}\cap M} \;|\; N_y\geq 1, N_{>y}=0, \cC)    
\\ = &\bP_{I_{\leq y}\cap M}(B_{I_{\leq y}\cap M} \;|\; N_y\geq 1, \cC^y)
\\ = &\bP_y(N_y\in \cdot \;|\; N_y\geq 1) \bP_{I_{<y}\cap M}(B_{I_{<y}\cap M} \;|\; \cC^y),
\end{align}
where the last equality again follows by independence, and from which we conclude the proof.
\end{proof}

Before presenting our first application of Lemma \ref{lemma:decomposition}, we recall a basic property for the Poisson distribution.

\begin{lem}\label{lem:basicPoisson}
Consider  $Z\sim Poi(\lambda)$ with $\lambda>0$. There is $n=n(\lambda) \in \bN$ such that $\bP(Z\in \cdot \;|\; Z\geq 1) \prec \bP(W_n \in \cdot)$, where $W_n=Z+n$.
\end{lem}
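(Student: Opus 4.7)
The plan is to establish the stronger statement that, for $n$ large enough, the probability mass function of $W_n$ dominates that of the conditional law \emph{pointwise} on the relevant range, and then deduce stochastic domination almost for free. Write
\begin{align}
f(k) &:= \bP(Z = k \mid Z \geq 1) = \frac{\lambda^k}{k!\,(e^{\lambda}-1)}, \quad k \geq 1, \\
g(k) &:= \bP(W_n = k) = \frac{e^{-\lambda}\lambda^{k-n}}{(k-n)!}, \quad k \geq n.
\end{align}
Note that $W_n \geq n$ almost surely, so $g(k)=0$ for $k<n$.

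First I would compare $f$ and $g$ on $\{k \geq n\}$. The ratio is
\begin{equation}
\frac{g(k)}{f(k)} = \frac{(1-e^{-\lambda})\,k!}{\lambda^n\,(k-n)!} = \frac{(1-e^{-\lambda})}{\lambda^n}\,k(k-1)\cdots(k-n+1),
\end{equation}
which is manifestly non-decreasing in $k$ for $k \geq n$. Since $\lambda^n/n! \to 0$ as $n\to\infty$, I choose $n=n(\lambda)$ large enough so that $\lambda^n/n! \leq 1-e^{-\lambda}$; this guarantees $g(n)/f(n) \geq 1$, and monotonicity then gives $g(k) \geq f(k)$ for every $k \geq n$.

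With this pointwise dominance in hand, the stochastic domination $\bP(Z \in \cdot \mid Z\geq 1) \prec \bP(W_n \in \cdot)$ follows by checking tail probabilities $\bP(Y \geq K) \leq \bP(W_n \geq K)$ in two trivial cases. For $K \leq n$, the right-hand side equals $1$ because $W_n \geq n \geq K$ almost surely. For $K > n$, summing the pointwise inequality $f(k) \leq g(k)$ over $k \geq K$ gives the claim directly.

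I do not foresee any real obstacle here: the lemma is a purely elementary tail comparison, and the only ``choice'' in the argument is to take $n$ large enough that $\lambda^n/n! \leq 1-e^{-\lambda}$, which handles the worst case $k = n$ and, by monotonicity of $g/f$, all larger $k$ simultaneously.
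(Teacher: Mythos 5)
Your proposal is correct and is essentially the paper's own argument: both reduce to checking that the ratio $\bP(W_n=k)/\bP(Z=k\mid Z\geq 1)=(1-e^{-\lambda})\,k(k-1)\cdots(k-n+1)/\lambda^{n}$ is at least $1$ on the relevant range (the paper bounds it below by $(1-e^{-\lambda})(n+1)!/\lambda^{n}$ inside the tail sums for $k>n$, you evaluate it at $k=n$ and use monotonicity), and both dispatch the case $K\leq n$ by noting $\bP(W_n\geq K)=1$. No gap.
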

\begin{proof}
Fix $\lambda>0$. It is sufficient to show that, for any $k\geq 0$, it holds that 
\begin{equation}
\bP(Z \geq k \;|\; Z\geq 1) \leq \bP(W_n \geq k),
\end{equation}
for some $n \in \bN$. That this is true for all $k \leq n$ is immediate since then  $ \bP(W_n \geq k)=1$. For $k>n$ we have that 
\begin{align}
 &\bP( Z \geq k \mid Z\geq 1) = \frac{ \bP( Z \geq k)}{ \bP( Z \geq 1)}=   \sum_{l \geq k}^{\infty}e^{-\lambda} \frac{\lambda^l}{l!} (1-e^{-\lambda})^{-1};
\\ &\bP( W_n\geq k) =  \bP( Z \geq k-n)  = \sum_{l \geq k}^{\infty}e^{-\lambda} \frac{\lambda^l}{l!} \frac{l!/(l-n)!}{\lambda^n}
\end{align}
Now, note that the term $\frac{l!/(l-n)!}{\lambda^n}$ in the derivation of $\bP( W_n\geq k)$ always is larger than $\frac{(n+1)!}{\lambda^n}$ since we assume $k>n$. 
The claim thus follows since  $1-e^{-\lambda} >  \frac{\lambda^n}{(n+1)!}$ for all $n$ large.
\end{proof}

Utilising the decomposition in  \eqref{eq:decomposition} via an iterative procedure, and using Lemma \ref{lem:basicPoisson}, we obtain the following domination statement. 

\begin{lem}\label{lem:firstDomiGeneral}
Assume that there is a $\lambda<\infty$ such that $\lambda_x \leq \lambda$ for all $x\in I$. Then we have that $\bP_I( \cdot \;|\; \cC)  \preceq (N_x)_{x\in I}+\sum_{i=1}^\kappa \sum_{j=1}^n \delta_{Y_i^{\super{j}}}$, where 
\begin{enumerate}
\item $\kappa$  is a random number in $\{1,\dots, K\}$
\item $n=n(\lambda)$ is as in Lemma \ref{lem:basicPoisson}
\item The $Y_i^{\super{j}}$'s are random points in $I$.
\item For each $i \in 1,\dots, \kappa$, the $Y_i^{\super{j}}$'s, $j=1\dots,n$, are identical.
\item For $i=1,\dots, \kappa$,  $Y_1^{\super{1}}< \dots < Y_\kappa^{\super{1}}$.
\item Both $\kappa$ and the $(Y_i^{\super{j}})'s$ are  independent of $(N_x)_{x\in I}$.
\end{enumerate}
\end{lem}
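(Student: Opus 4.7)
The plan is to induct on the number $K$ of active constraints, combining the mixture decomposition of Lemma \ref{lemma:decomposition} with the one-site domination of Lemma \ref{lem:basicPoisson}. When there are no active constraints, $\bP_I(\cdot \mid \cC) = \bP_I$ and the statement holds trivially (with no extras). For the inductive step, Lemma \ref{lemma:decomposition} expresses $\bP_I(\cdot \mid \cC)$ as a mixture over the random top index $Y = \max\{x \in M : N_x > 0\}$, and, conditionally on $Y = y$, splits the $N_x$'s into four independent groups: (a) unconditional $\Poi(\lambda_x)$ on $M^c$; (b) identically $0$ on $I_{>y} \cap M$; (c) $\Poi(\lambda_y)$ conditioned on being $\geq 1$ at $y$; (d) the law $\bP_{I_{<y} \cap M}(\cdot \mid \cC^y)$ on $I_{<y} \cap M$.

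Groups (a) and (b) are trivially dominated by unconditional Poissons on the same positions. For (c), Lemma \ref{lem:basicPoisson} together with the uniform bound $\lambda_y \leq \lambda$ yields $\Poi(\lambda_y)\mid\{\geq 1\} \preceq \Poi(\lambda_y) + n$, so one replaces the conditional Poisson at $y$ by an unconditional $N_y$ plus $n$ extra points located at $y$. For (d), the key observation is that $y \in M$ implies $y \in C_i$ for at least one $i$, and the definition of $\cC^y$ discards precisely those constraints, so $\cC^y$ has strictly fewer active constraints than $\cC$ and the inductive hypothesis applies. Assembling the four pieces and labelling the newly peeled-off position as $Y_\kappa := y$, with $Y_1 < \cdots < Y_{\kappa-1}$ furnished by the induction (all contained in $I_{<y}$), one obtains properties (1)--(5) of the lemma directly, with $\kappa \leq K$ because each recursive step removes at least one constraint, and with the common multiplicity $n$ depending only on the uniform upper bound $\lambda$.

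The delicate step, and the main obstacle, is property (6): the unconditional field $(N_x)_{x \in I}$ appearing on the right-hand side must be chosen independent of $(\kappa, Y_1, \ldots, Y_\kappa)$. I would handle this by implementing Lemma \ref{lem:basicPoisson} through the explicit monotone coupling in which one first draws an unconditional $\Poi(\lambda_y)$ and then appends $n$ deterministic extras, so that the unconditional sample is independent of the added mass. Propagating this construction through the recursion produces a joint coupling in which the entire field $(N_x)_{x \in I}$ is sampled once, independently of the randomness that generates the peeled-off indices. Verifying that this joint construction actually dominates $\bP_I(\cdot \mid \cC)$ amounts to a conditional version of the statement: for each realisation $(\kappa, y_1, \ldots, y_\kappa)$, the law of $\bP_I(\cdot \mid \cC)$ conditioned on those peeled-off values must be dominated, as a measure on $\bN_0^I$, by $\bP_I + n\sum_i \delta_{y_i}$. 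This can be checked by running the recursion layer by layer while keeping track of which unconditional Poissons have already been coupled, and is where the main bookkeeping effort lies.
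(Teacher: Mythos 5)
Your proposal is correct and follows essentially the same route as the paper: decompose via Lemma \ref{lemma:decomposition}, dominate the conditioned Poisson at the peeled-off index using Lemma \ref{lem:basicPoisson}, and recurse on the reduced constraint set $\cC^{y}$, which loses at least one constraint per step so that the procedure terminates after at most $K$ rounds. Your extra care with property (6), realising the domination through an explicit coupling in which the unconditional field is sampled independently of the peeled-off indices, is exactly what the paper's terser remark that the extra points are ``independent of the Poisson random variables'' amounts to.
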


\begin{proof}
By Lemma \ref{lemma:decomposition} there exists $Y_1$ so that $\bP((N_x)_{x \in I} \in \cdot \;|\; \cC)$ is stochastically dominated by 
\begin{align}
&\prod_{x\in M^c} \bP(N_x \in \cdot) \sum_{y_1 \in M} \bP(Y_1=y_1 \;|\; \cC) \prod_{x \in I_{>y_1} \cap M} \bP(N_x\in \cdot) \\
&\quad\cdot \bP(N_{y_1} + n \in \cdot ) \bP((N_{x})_{x\in I_{<y_1}\cap M} \in \cdot \;|\;\cC^{y_1}),
\end{align}
where we used that $\delta_0$ is trivially dominated by a Poisson random variable, and also that $\bP(N_{y_1} \in \cdot \;|\; N_{y_1}\geq 1) \prec \bP(N_{y_1}+n \in \cdot)$, where $n=n(\lambda)$ is provided by Lemma \ref{lem:basicPoisson}.  In particular, the distribution of $N_{y_1}$ is independent of $Y_1$.

There are now two possible cases. First, it could be that the conditioning $\cC^{y_1}$ is empty, in which case $\bP((N_{x})_{x\in I_{<y_1}\cap M} \in \cdot \;|\;\cC^{y_1})$ is simply Poisson, and the points $Y_1^{\super{1}}=\dots= Y_1^{\super{n}}$ are the additional points needed to obtain stochastic domination. Second, if $\cC^{y_1}$ is not trivial, we repeat the argument for $\bP((N_{x})_{x\in I_{<y_1}\cap M} \in \cdot \;|\;\cC^{y_1})$, which will yield a second point $Y_2<Y_1$. Since each point $Y_i$ decreases the number of still relevant constraints in $\cC$ by at least $1$, after at most $K$ iterations we have fully satisfied $\cC$. 

Calling $\kappa$ the number of iterations we observe that the dominating vector is given by a product of independent Poisson($\lambda_i$) random variables plus $\kappa$ many extra points, that are independent of the Poisson random variables.
\end{proof}

In the context of the field of random walks, Lemma \ref{lem:firstDomiGeneral} comes close to that of Theorem \ref{thm:domination2}. Particularly, it implies that $\bQ_{A,\gamma}$ is stochastically dominated by the field $\bQ_{\gamma}$ plus an independent set of at most $|O| n$ additional random walk particles, provided by the $Y_i^{\super{j}}$ points, so that each of the constraints are fulfilled (at least once). What we still need to control is that these additional particles do not accumulate at certain space-time locations. This will be obtained by a more thorough description of the decomposition in Lemma  \ref{lemma:decomposition}, and the iteration thereof, as given in the next subsection.

\subsection{Coarsening, and undoing coarsening}

In the remainder of this note, we restrict to the case where $I=\{0,1\}^n$ and, for some fixed $O \subset \{1,\dots,n\}$, assume that the constraints are given by  $C_i = \{x \in I \colon x(i) = 1\}$, $i \in O$. 
 Moreover, to prepare for an  iteration of Lemma \ref{lemma:decomposition}, we add a superscript $^\super1$ to the notions introduced in the previous subsection, e.g.\ we write $I^\super1= I$, $C^\super1_i=C_i$, $O^\super1=O$, $M^\super1=M$. 
 Further, we write  $Y^{(1,1)}$ for $Y$ and, provided that $Y^\super{1,1}=y_1$, we let $R^\super1 = I^\super1_{<y_1}\cap M^\super1$ denote the index set appearing in  Lemma \ref{lemma:decomposition} of Poissons where we still (potentially) condition. 
 
 We want to analyse the term $\bP^\super1_{R^\super1}\left( B_{R^\super1} \;\middle|\;\cC^{\super1, y_1}\right)$  further. 
 For this, let 
$O^\super2=\{i\in O^\super1: y_1(i)=0 \}$ 
be the index set of still not satisfied constraints and set  
\begin{equation}
C_i^{\super1,y_1}:=C^\super1_i\cap I^\super1_{<y_1} \quad \text{ and } \quad \cC_i^{\super1,y_1}=\left\{\sum_{x\in C^{\super1,y_1}_i}N^\super1_x>0\right\}, i\in O^\super2,
\end{equation} 
so that the corresponding event that we still condition on is given by $\cC^{\super1,y_1}=\bigcap_{i\in O^\super2}\cC_i^{\super1,y_1}$. 

Recall that, if $O^\super2$ is empty, then there is no further conditioning and 
$\bP^\super1_{R^\super1} (\cdot \;|\; \cC^{\super1, y_1})$ equals $\bP^\super1_{R^\super1}$,
meaning the remaining $N^\super1_x$, $x\in R^\super1$, are just independent Poissons with rates $\lambda^\super1_x$.

Now assume $O^\super2$ is non-empty, which means $\min(O^\super2) \leq n$ is the first coordinate in $I$ still having an unsatisfied constraint. Let $I^\super{2}:=\{0,1\}^{\{\min(O^\super2),...,n\}}$ and again equip this space with the lexicographical order. Let $\pi_2 : R^\super1 \to I^\super2$ be the canonical projection. Note that $\pi_2$ does not preserve the order, which is why the domain is chosen to be exactly the set over which some conditioning is still performed. 

Next, we group multiple of the Poisson random variables $N^\super1_x$ in $R^\super1$ according to the image of $x$ under $\pi_2$. More precisely, we define $N^\super2_x$, $x\in I^\super2$, via $N^\super2_x=\sum_{z\in \pi_2^{-1}(x)}N^\super1_z$ (with an empty sum being 0), and let $\bP^\super2$ denote their joint law. Note that the sum of independent Poisson random variables is again Poisson, and hence by construction this is a product of independent Poissons with parameters $\lambda^\super2_x = \sum_{z\in \pi_2^{-1}(x)} \lambda^\super1_z$ for $x \in I^\super2$. Let us stress the importance of the term $\pi_2^{-1}(z)$ here, which keeps track of the constituent parts of $N^\super2_z$. 

Lastly, consider on the space $I^\super2$  the constraints, for $ i\in O^\super2$, 
 \begin{align}
 &C^\super2_i = \pi_2(C^{\super1,y_1}_i) \quad \left( = \{x\in I^\super2: x(i)=1\}  \right);
\\&\cC^\super2_i = \{\sum_{x\in C^\super2_i}N^\super2_x>0 \} \quad \left( = \{ \sum_{x\in C^{\super1}_i\cap R^\super1}N^\super1_x>0 \} \right);
\\ &\cC^\super2 = \bigcap_{i\in O^\super2} \cC^\super2_i.
\end{align}
By construction and the above discussion, we thus conclude that, in law, 
\begin{equation}\label{lemma:iteration}
\bP^\super1_{R^\super1}( (N^\super2_x)_{x\in I^\super2} \in \cdot \;|\; \cC^{\super1,y_1}) = \bP^\super2_{I^\super2}(\cdot \;|\; \cC^\super2). 
\end{equation}

What we have done now is a coarsening of the space that prepares us for iterating Lemma \ref{lemma:decomposition} in order to obtain a further decomposition of $\bP_I(\cdot \;|\; \cC)$.

Before we do that, we mention that we can undo this coarsening. In fact, we can re-obtain $\bP^\super1_{R^\super1}$ from $\bP^\super2_{I^\super2}$ via multinomial splitting, that is
\begin{align}
(N^\super1_x)_{x\in \pi_2^{-1}(z)} = \Mult_z(N^\super2_z):=\Mult\left(N^\super2_z; (\frac{\lambda^\super1_x}{Z_z^\super1})_{x\in \pi_2^{-1}(z)}\right)
\end{align}
with $Z^\super1_z = \sum_{x\in \pi_2^{-1}(z)} \lambda^\super1_x$ and where the multinomial sampling is independent over all $z$ and independent of $(N^\super2_z)_{z\in I^\super2}$.

\subsection{The iteration procedure}
We will now perform the general iteration which was described in the previous subsection for one step, introducing the superscript-notation $^\super k$ for the objects at the $k$'th iteration step. Particularly, assume $(O^\super k, \lambda^\super{k}_{I^\super{k}})$ is given where  $O^\super k\subset \{1,...,n\}$ is non-empty and $I^\super k= \{0,1\}^{\{\min(O^\super k),\dots,n\}}$. Let $\bP^\super{k}_{I^\super k}$ be the law of  the independent Poisson random variables $N^{\super k}_x$ with rates $\lambda_x^\super k$, $x\in I^\super k$. For  $i\in O^\super k$, define the constraint sets $C_i^{\super k}=\{x\in I^\super k:x(i)=1\} $, $\cC_i^{\super k} = \{\sum_{x\in C_i^\super k} N_x^\super k > 0\}$, and let $\cC^\super k = \bigcap_{i\in O^\super k} \cC_i^\super k$. 
We set $M^\super{k} = \bigcup_{i\in O^\super k} C^\super{k}_i$ and define 
\begin{equation}Y^{(k,k)} = \max\{x \in M^\super{k} : N_x^\super{k}>0\}.
\end{equation}
Assuming that  $Y^{(k,k)}=y_k \in C^\super{k}_{\min(O^\super k)}$, let  $R^\super{k}=I^\super{k}_{<y_k}\cap M^\super{k}$. 

 If $O^\super{k+1}:= \{i \in O^\super{k} : y_k(i) = 0 \} = \emptyset$, then there is no next level. Otherwise, if $O^\super{k+1}\neq \emptyset$, the next level $(O^\super{k+1},\lambda^\super{k+1}_{I^\super{k+1}})$ is determined by the rates
\begin{align}
\lambda^{\super{k+1}}_x := \sum_{z \in \pi_{k+1}^{-1}(x)} \lambda^\super{k}_z, \quad x\in I^\super{k+1}= \{0,1\}^{\{\min(O^\super{k+1}),...,n\}},
\end{align}
with $\pi_{k+1}: R^\super{k} \to I^\super{k+1}$ again the canonical projection. In that case, consider on the space $I^\super{k+1}$  the constraints, for $ i\in O^\super{k+1}$, 
 \begin{align}
 C^\super{k+1}_i = \pi_{k+1}(C^{\super1,y_k}_i), \: 
\cC^\super{k+1}_i = \{\sum_{x\in C^\super{k+1}_i}N^\super{k+1}_x>0 \},  \:  
\cC^\super{k+1} = \bigcap_{i\in O^\super{k+1}} \cC^\super{k+1}_i.
\end{align}
Then, by a direct application of Lemma \ref{lemma:decomposition}, inserting  the identity \eqref{lemma:iteration} at level $k$, we conclude the following.

\begin{cor}\label{lemma:decompositionK}
Let $B^\super k \subset \bN_0^{I^\super k}$ be a cylinder event. Then we have that 
\begin{align}
&\bP^\super{k}_{I^\super k}(B^\super k \;|\; \cC^\super k) 	\\
&= \prod_{x\in (M^\super k)^c} \bP^\super{k}_x(B^\super{k}_x) \sum_{y_k \in C^\super{k}_{\min(O^\super k)}} \bP^\super{k}_{I^\super{k}}(Y^\super{k,k}=y_k \;|\; \cC^\super{k}) \prod_{x\in I^\super{k}_{>y_k}\cap M^\super{k}} \ind_{0\in B^\super{k}_x} \\
&\quad\cdot \bP^\super{k}_{y_k}(N^\super{k}_{y_k} \in B^\super{k}_{y_k} \;|\; N^\super{k}_{y_k} \geq 1)	
 \cdot \bP^\super{k+1}_{I^\super{k+1}}( ( \text{Mult}_x(N_x^\super{k+1}))_{x \in I^\super{k+1}} \in B_{\pi_{k+1}^{-1} (I^\super{k+1})}\;|\; \cC^\super{k+1}). 
\end{align}
\end{cor}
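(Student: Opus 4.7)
The plan is to apply Lemma \ref{lemma:decomposition} verbatim at level $k$ (with $I, C_i, N_x, M, Y, R$ replaced by $I^\super k, C^\super k_i, N^\super k_x, M^\super k, Y^{\super{k,k}}, R^\super k$), and then rewrite the residual factor on $R^\super k$ by means of the coarsening identity \eqref{lemma:iteration} together with the multinomial splitting that recovers the fine-level Poissons from the coarse ones.

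\textbf{Step 1 (direct application of Lemma \ref{lemma:decomposition}).} Instantiating Lemma \ref{lemma:decomposition} at level $k$ gives that $\bP^\super{k}_{I^\super k}(B^\super k \mid \cC^\super k)$ equals
\begin{align}
&\prod_{x\in (M^\super k)^c} \bP^\super{k}_x(B^\super{k}_x) \sum_{y_k \in M^\super k} \bP^\super{k}_{I^\super{k}}(Y^{\super{k,k}}=y_k \mid \cC^\super{k}) \prod_{x\in I^\super{k}_{>y_k}\cap M^\super{k}} \ind_{0\in B^\super{k}_x} \\
&\quad \cdot \bP^\super{k}_{y_k}(N^\super{k}_{y_k} \in B^\super{k}_{y_k} \mid N^\super{k}_{y_k} \geq 1)\,\bP^\super{k}_{R^\super k}(B^\super{k}_{R^\super k} \mid \cC^{\super k,y_k}),
\end{align}
which already matches the claimed formula except that the sum runs over $M^\super k$ and the last factor is not yet expressed at level $k{+}1$.

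\textbf{Step 2 (restricting the sum to $C^\super k_{\min(O^\super k)}$).} Since $I^\super k = \{0,1\}^{\{\min(O^\super k),\ldots,n\}}$ is equipped with the lexicographic order starting at coordinate $\min(O^\super k)$, any $x$ with $x(\min(O^\super k))=1$ is strictly larger than any $x'$ with $x'(\min(O^\super k))=0$; hence every element of $C^\super k_{\min(O^\super k)}$ dominates every element of $M^\super k\setminus C^\super k_{\min(O^\super k)}$. Conditioning on $\cC^\super k$ includes $\cC^\super k_{\min(O^\super k)} = \{\sum_{x\in C^\super k_{\min(O^\super k)}} N^\super k_x>0\}$, so at least one coordinate in $C^\super k_{\min(O^\super k)}$ carries a positive Poisson. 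Consequently $Y^{\super{k,k}} \in C^\super k_{\min(O^\super k)}$ almost surely under $\bP^\super k_{I^\super k}(\cdot\mid\cC^\super k)$, and the contributions with $y_k \in M^\super k\setminus C^\super k_{\min(O^\super k)}$ vanish.

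\textbf{Step 3 (coarsen via $\pi_{k+1}$ and undo by multinomial splitting).} If $O^{\super{k+1}}$ is empty then $\cC^{\super k,y_k}$ imposes no further condition and the factor $\bP^\super k_{R^\super k}(B^\super k_{R^\super k}\mid \cC^{\super k,y_k})$ agrees with the unconditional level-$(k{+}1)$ law, which is a vacuous case of the claimed expression. Otherwise, by construction $\pi_{k+1}\colon R^\super k\to I^\super{k+1}$ transports $(N^\super k_x)_{x\in R^\super k}$ to $(N^\super{k+1}_x)_{x\in I^\super{k+1}}$ and the identity \eqref{lemma:iteration} gives $\bP^\super k_{R^\super k}(\,\cdot \mid \cC^{\super k,y_k}) = \bP^\super{k+1}_{I^\super{k+1}}(\,\cdot \mid \cC^\super{k+1})$ at the level of the pushforward. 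To lift this back to an event on the fine variables we use, as described in the text preceding the corollary, the independent multinomial splitting $(N^\super k_x)_{x\in \pi_{k+1}^{-1}(z)} = \Mult_z(N^\super{k+1}_z)$, whose marginals reproduce the independent Poissons on $R^\super k$ with rates $\lambda^\super k_x$. This expresses the last factor as $\bP^\super{k+1}_{I^\super{k+1}}((\Mult_x(N^\super{k+1}_x))_{x\in I^\super{k+1}} \in B^\super k_{R^\super k} \mid \cC^\super{k+1})$, i.e.\ the last line of the statement, and completes the proof.

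\textbf{Main obstacle.} The argument is essentially bookkeeping; the only non-formal step is the lex-order observation in Step 2 together with keeping the multinomial/coarsening correspondence straight, so that the cylinder event $B^\super k_{R^\super k}$ in the fine variables is correctly pulled back to an event in $(N^\super{k+1}_x)_{x\in I^\super{k+1}}$ via the multinomial splitting.
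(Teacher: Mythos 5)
Your proposal is correct and follows essentially the same route as the paper: apply Lemma \ref{lemma:decomposition} at level $k$, observe via the lexicographic order that $Y^{\super{k,k}}$ must land in $C^{\super{k}}_{\min(O^{\super k})}$ (exactly the paper's justification inside the proof of Lemma \ref{lemma:iteration2}), and rewrite the residual factor on $R^{\super k}$ through the coarsening identity \eqref{lemma:iteration} undone by multinomial splitting, noting that $\cC^{\super{k},y_k}$ is measurable with respect to the coarse variables. No gaps.
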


With Corollary \ref{lemma:decompositionK} at hand, we can thus iterate the procedure by decomposing the term $\bP^\super{k+1}_{I^\super{k+1}}(\cdot \;|\; \cC^\super{k+1})$ further, using Lemma \ref{lemma:decomposition}. 
We next provide the details for undoing this coarsening. For this, for a cylinder set $B^\super k \subset \bN_0^{I^\super k}$, we write 
\begin{align}
\Psi^\super{k}\left( B^\super{k}_{R^\super{k}} \right)
= \int \!\!\!\!\prod_{x\in I^\super{k+1}}\!\!\!\! \bP\left(\Mult(\mathbf{n}_x ; \Lambda^\super{k}_x) \in B^\super{k}_{\pi_{k+1}^{-1}(x)} \right) \ \bP^\super{k+1}_{I^\super{k+1}}(d\mathbf{n}\;|\;\cC^\super{k+1})
\end{align}
to denote the independent multinomial splitting of the conditional Poisson random variables of the level $k+1$, where 
 $\Lambda^\super{k}_x = (\lambda^\super{k}_{z} / \lambda^\super{k+1}_x)_{z\in \pi_{k+1}^{-1}(x)}$ and $\bP^\super{k+1}_{I^\super{k+1}}(\; \cdot \;|\;\cC^\super{k+1})$ is the conditional law determined by the $y_k$-dependent parameters $(O^\super{k+1}, \lambda^\super{k+1}_{I^\super{k+1}})$.

\begin{lem}\label{lemma:iteration2} 
 Let $B^\super k \subset \bN_0^{I^\super k}$ be a cylinder event. Then we have that 
\begin{align}
&\bP^\super{k}_{I^\super k}(B^\super k \;|\; \cC^\super k) 	\\
&= \prod_{x\in (M^\super k)^c} \bP^\super{k}_x(B^\super{k}_x) \sum_{y_k \in C^\super{k}_{\min(O^\super k)}} \bP^\super{k}_{I^\super{k}}(Y^\super{k,k}=y_k \;|\; \cC^\super{k}) \prod_{x\in I^\super{k}_{>y_k}\cap M^\super{k}} \ind_{0\in B^\super{k}_x} \\
&\quad\cdot \bP^\super{k}_{y_k}(N^\super{k}_{y_k} \in B^\super{k}_{y_k} \;|\; N^\super{k}_{y_k} \geq 1)		\\
&\quad \cdot\Big[ \ind_{O^\super{k+1}= \emptyset} \prod_{x\in R^\super k} \bP^\super{k}_x(B^\super{k}_x)   
+\ind_{O^\super{k+1} \not = \emptyset} \Psi^\super{k}(B_{R^\super k}^\super{k}) \Big].
\end{align}
\end{lem}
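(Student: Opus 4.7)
The plan is to start from Corollary \ref{lemma:decompositionK} and complement it with the edge case $O^\super{k+1}=\emptyset$ that the corollary does not cover. Whenever $O^\super{k+1}\neq\emptyset$, the corollary already gives the stated decomposition, its last factor being $\bP^\super{k+1}_{I^\super{k+1}}((\Mult_x(N^\super{k+1}_x))_{x\in I^\super{k+1}}\in B_{\pi_{k+1}^{-1}(I^\super{k+1})}\mid\cC^\super{k+1})$, which is exactly $\Psi^\super{k}(B^\super{k}_{R^\super{k}})$ by its integral definition. I would also observe that the sum over $y_k\in M^\super{k}$ may be restricted to $C^\super{k}_{\min(O^\super{k})}$: since $I^\super{k}=\{0,1\}^{\{\min(O^\super{k}),\dots,n\}}$ and the lexicographic order is leftmost-significant, any $y_k$ with $y_k(\min(O^\super{k}))=0$ would be strictly dominated by any $x\in C^\super{k}_{\min(O^\super{k})}$ with $N^\super{k}_x>0$ (such an $x$ exists under $\cC^\super{k}_{\min(O^\super{k})}$), contradicting maximality of $Y^{\super{k,k}}$.

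To handle the case $O^\super{k+1}=\emptyset$, I would apply Lemma \ref{lemma:decomposition} directly at level $k$. It produces a decomposition identical in form to the target, but with last factor $\bP^\super{k}_{R^\super{k}}(B^\super{k}_{R^\super{k}}\mid\cC^{\super{k},y_k})$. When $O^\super{k+1}=\emptyset$, the maximal index $y_k$ already satisfies $y_k(i)=1$ for every $i\in O^\super{k}$, so that $\cC^{\super{k},y_k}=\bigcap_{i\in O^\super{k+1}}\cC_i^{\super{k},y_k}$ is an intersection over the empty set and is trivially satisfied. Independence of the Poissons under $\bP^\super{k}$ then collapses this factor to $\prod_{x\in R^\super{k}}\bP^\super{k}_x(B^\super{k}_x)$, matching the first summand inside the bracket.

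The main (mild) obstacle is rigorously verifying that the expression produced by the corollary is indeed $\Psi^\super{k}(B^\super{k}_{R^\super{k}})$. This reduces to the standard Poisson splitting property: unconditionally, given $N^\super{k+1}_x$, the fibre $(N^\super{k}_z)_{z\in\pi_{k+1}^{-1}(x)}$ is $\Mult(N^\super{k+1}_x;\Lambda^\super{k}_x)$ distributed, independently across $x\in I^\super{k+1}$. Because the conditioning event $\cC^{\super{k},y_k}$ is measurable with respect to the coarse variables $(N^\super{k+1}_x)_x$ (which is precisely what the identity \eqref{lemma:iteration} encodes), this conditional splitting remains unchanged. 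Integrating the independent multinomial probabilities against $\bP^\super{k+1}_{I^\super{k+1}}(d\mathbf{n}\mid\cC^\super{k+1})$ then reproduces the integral expression defining $\Psi^\super{k}$, closing the argument.
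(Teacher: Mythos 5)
Your proposal is correct and follows essentially the same route as the paper: it starts from Lemma \ref{lemma:decomposition}/Corollary \ref{lemma:decompositionK}, restricts the sum to $y_k\in C^\super{k}_{\min(O^\super{k})}$ via the lexicographic order (the leading coordinate of $I^\super{k}$ being $\min(O^\super{k})$), treats the case $O^\super{k+1}=\emptyset$ by noting the residual constraint is vacuous so independence collapses the last factor to a product, and identifies the remaining factor with $\Psi^\super{k}$ through the fact that $\cC^{\super{k},y_k}$ is measurable with respect to the coarse variables $N^\super{k+1}$ so that the conditional law of the fine variables is unconditional independent multinomial splitting. No gaps.
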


\begin{proof}
By Lemma \ref{lemma:decomposition}  and Corollary \ref{lemma:decompositionK} we have
\begin{align}
&\bP^\super{k}_{I^\super k}(B^\super k \;|\; \cC^\super k) 	\\
&= \prod_{x\in (M^\super k)^c} \bP^\super{k}_x(B^\super{k}_x) \sum_{y_k \in M^\super{k}} \bP^\super{k}_{I^\super{k}}(Y^\super{k,k}=y_k \;|\; \cC^\super{k}) \prod_{x\in I^\super{k}_{>y_k}\cap M^\super{k}} \ind_{0\in B^\super{k}_x} \\
&\quad\cdot \bP^\super{k}_{y_k}(N^\super{k}_{y_k} \in B^\super{k}_{y_k} \;|\; N^\super{k}_{y_k} \geq 1)	
\bP^\super{k}_{R^\super{k}}(B_{R^\super k}^\super{k}\;|\;\cC^{\super{k}, y_k}) .
\end{align}
By the lexicographical ordering we have that the elements of $C^\super{k}_{\min(O^\super k)}$ are larger than all other elements of $M^\super k$, hence $y_k \in C^\super{k}_{\min(O^\super k)}$.

What remains is the term $\bP^\super{k}_{R^\super{k}}(B_{R^\super k}^\super{k}\;|\;\cC^{\super{k}, y_k})$.  
If $O^\super{k+1} = \emptyset$, then the constraint is empty and the probability is, by independence, simply the product over the one-dimensional Poisson random variables $(N_x)_{x \in R^\super{k}}$. Otherwise, we make the same observation as in the previous subsection that the constraints $\cC_i^{\super{k}, y_k}$ are given by
\[
\cC_i^{\super{k}, y_k}= \left\{ \sum\nolimits_{x\in C^{\super k}_i\cap R^\super k}N^\super{k}_x>0 \right\} = \left\{\sum\nolimits_{x\in C^\super{k+1}_i} \sum\nolimits_{z\in \pi_k^{-1}(x)} N^\super{k}_z > 0 \right\}.
\]
By grouping $\sum\nolimits_{z\in \pi_k^{-1}(x)} N^\super{k}_z$ into new Poisson random variables $N^\super{k+1}_x$ with rates $\lambda^\super{k+1}_x$ we see that the condition $\cC^{\super{k}, y_k}$ is in fact a condition on the values of $N^\super{k+1}_x$ only. The law of the $(N^\super{k}_z)_{z\in R^\super k}$ given $(N^\super{k+1}_x)_{x\in I^\super{k+1}}$ is given by independent multinomial splitting, that is
\[ \bP^\super{k}\left( B^\super{k}_{R^\super{k}} \right)
= \int \!\!\!\!\prod_{x\in I^\super{k+1}}\!\!\!\! \bP\left(\Mult(\mathbf{n}_x ; \Lambda^\super{k}_x) \in B^\super{k}_{\pi_{k+1}^{-1}(x)} \right) \ \bP^\super{k+1}_{I^\super{k+1}}(d\mathbf{n}). \]
Conditioning on $\cC^{\super{k}, y_k}$ then gives us $\Psi^\super{k}(B^\super{k}_{R^\super k})$. Particularly, we conclude that
\begin{equation}
\bP^\super{k}_{R^\super{k}}(B_{R^\super k}^\super{k}\;|\;\cC^{\super{k}, y_k}) = \Big[ \ind_{O^\super{k+1}= \emptyset} \prod_{x\in R^\super k} \bP^\super{k}_x(B^\super{k}_x)   
+\ind_{O^\super{k+1} \not = \emptyset} \Psi^\super{k}(B_{R^\super k}^\super{k}) \Big]
\end{equation}
and therefore, by the above discussion, also the proof.
\end{proof}

\subsection{The full decomposition of $\bP_I(\cdot \;|\; \cC)$.}

Here we combine the theory from the previous subsections to obtain a more detailed decomposition of $\bP(\cdot | \cC)$. As an application thereof we give the proof of Theorem \ref{thm:domination2}, summarized in the next subsection.

The starting point is Lemma \ref{lemma:decomposition}. By utilising  Corollary  \ref{lemma:decompositionK} and Lemma \ref{lemma:iteration2}, we can iterate this decomposition. 
Since $|O|<\infty$, this procedure will stop at the random time $\kappa = \inf\{ k \geq 1 \colon O^\super{k} = \emptyset \} \leq n$. This provide us with elements $Y^\super{i,i}=y_i$ for $i =1,\dots, \kappa$, where $Y^\super{i,i} \in I^\super{i}$ represents the explored points that combined guarantee that the constraints are satisfied. 

There are two things that we still need to handle. Firstly,  the points $Y^\super{i,i}$ for $i=2,\dots, \kappa$ must be lifted to the space $I^\super1$, as must all other points $x \in I^\super{k}$ for $k>1$ contained in the decomposition of $\bP_I(\cdot \;|\; \cC)$ (i.e.\ those corresponding to $x \in ( M^\super{i} )^c$ for some $i \in \{1,\dots, \kappa \}$,  that their are no points corresponding to $x\in I^\super{k}_{>y_i}\cap M^\super{i}$ and lastly, possibly additional copies of the $y_i$'s). We explain in the following paragraph how this can be done using multinomial sampling. Secondly,  the result of this multinomial sampling procedure  will depend on the constraints and the points $y_1,\dots,y_{\kappa}$ in a seemingly highly not-trivial way which appears to be challenging to control explicitly. To overcome this complexity, we apply the domination property of Lemma \ref{lem:basicPoisson} as in  the proof of  Lemma \ref{lem:firstDomiGeneral} at each iteration step $k=1,\dots, \kappa$ before lifting the points to the space $I^\super1$.  As a consequence of this and the multinomial sampling, since no points besides the $Y^\super{i,i}$'s are counted twice in the decomposition, all points at the level of $I^\super1$ are stochastically dominated by points sampled according to the unconditional law $\bP_I(\cdot)$ and independently the additional lifts of $Y^\super{i,i}$, $i=1,\dots,\kappa$, of which there are $n=n(\lambda)$ many.

We now describe how to perform the first step. Given $y_k\in I^\super{k}$, we want to lift this to a random variable $Y^\super{k,1} \in I^\super1$ with $\opi_k(Y^\super{k,1})=y_k$, where $\opi_k = \pi_k \circ \pi_{k-1} \circ \cdots \circ \pi_2$. It is important to note here that $\opi_k$ is not defined on the entirety of $I^\super1$, but only on the subset $\opi_k^{-1}(I^\super{k}) := \pi_2^{-1} \circ \cdots \circ \pi_k^{-1}(I^\super{k})$.  We observe that $\lambda_{y_k}^\super{k} = \sum_{z \in \opi_k^{-1}(y_k)} \lambda_z^\super1$. The random lift $Y^\super{k,1}$ of $y_k$ is then defined as a random variable $Y^\super{k,1} \in \opi_k^{-1}(y_k)$ sampled independently from everything else (given $y_1,...,y_k$) with the probabilities $\lambda^\super1_z / \lambda^\super{k}_{y_k}$, $z\in \opi_k^{-1}(y_k)$. 
It follows from the domains of $\pi_\ell$ that $Y^\super{k,1}$ satisfies $\opi_\ell(Y^\super{k,1}) < y_\ell$ for all $\ell<k$ and $\opi_k(Y^\super{k,1})=y_k$. We remark here that lifting $y_k$ to $Y^\super{k,k-1} \in \pi_k^{-1}(y_k) \subset I^\super{k-1}$ with probabilities $\lambda_z^\super{k-1}/\lambda_{y_k}^\super{k}$, then lifting $Y^\super{k,k-1}$ to $Y^\super{k,k-2}\in \pi_{k-1}^{-1}(Y_k^\super{k-1}) \subset I^\super{k-2}$ with probabilities $\lambda_z^\super{k-2}/\lambda_{Y_k^\super{k-1}}^\super{k-1}$ and so on will give the same law of $Y^\super{k,1}$.

The above lifting procedure can also be applied to the other points  $x \in I^\super{k}$ for $k>1$ contained in the decomposition of $\bP_I(\cdot \;|\; \cC)$. 
For each  $x \in ( M^\super{i} )^c$ with $i \in \{1,\dots, \kappa \}$ we sample $N_x^\super{i} \sim Poi(\lambda_x^\super{i})$ and there next, independently, apply the same multinomial sampling procedure as described above for $N_x$ copies of $x$. The point $x\in I^\super{k}_{>y_i}\cap M^\super{i}$ are simply not sampled. Lastly, we sample $N_{y_i}$ according to the law $\bP_{y_i}^\super{i} ( N_{y_i} \in | N_{y_i}\geq 1)$ and insert an additional $N_{y_i}-1$ copies of $y_i$ to which, again independently, we apply the same multinomial sampling procedure.

  Summarizing the above, and arguing as in the proof of  Lemma \ref{lem:firstDomiGeneral},  we end this subsection with the following statement.

\begin{thm}\label{thm:domination2GeneralOlD}
The probability measure $\bP^\super{1}_{I^\super 1}( \cdot \;|\; \cC^\super 1)$ is equal, in law, to  
\begin{equation}\sum_{\bfy} \bP^\super{1}_{I^\super{1}}(\bfy \;|\; \cC^\super{1}) \sum_{k=1}^{\kappa}\cL(Y^\super{k}|\bfy), \quad \text{ where:}\end{equation}
\begin{itemize}
    \item $\bfy = (y_1,...,y_K)$ with $K\leq \kappa$, and $\bP^\super{1}_{I^\super{1}}(\bfy \;|\; \cC^\super{1})$ is the probability that the iteration of Lemma \ref{lemma:decomposition} using Corollary \ref{lemma:decompositionK} terminates after $K$ steps and $Y^\super{k,k}=y_k$, $k=1,...,K$;
    \item For $k=1,\dots,K$, $\cL(Y^\super{k}|\bfy) =\cL(Y^\super{1,k}|\bfy)$ is the law of the random lifts $Y^\super1_k \in I^\super1$ of $Y^\super{k,k} = y_k$ given $\bfy = (y_1,...,y_K)$.
    \item For $k>K$, $\cL(Y^\super{k}|\bfy)$ is the law of the additional random lifts of points appearing from the iteration procedure.
    \item Given $\bfy$, the random variables $(Y^\super{k}|\bfy)$ are independent.
\end{itemize}
Furthermore, there is an $n \in \bN$ such that $\bP^\super{1}_{I^\super 1}( \cdot \;|\; \cC^\super 1)$ is stochastically dominated by 
\begin{align}
\bP^\super{1}_{I^\super{1}}  \bigoplus \sum_{\bfy} \bP^\super{1}_{I^\super{1}}(\bfy \;|\; \cC^\super{1}) \sum_{k=1}^{|\bfy|}\sum_{j=1}^{n}\cL(Y_j^\super{1,k}|\bfy),
\end{align}
where, given $\bfy = (y_1,...,y_K)$, the $Y_j^\super{1,k}$ are independent random lifts  of $Y^\super{k,k} = y_k$.
\end{thm}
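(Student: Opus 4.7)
The plan is to establish the theorem by iterating Lemma \ref{lemma:iteration2} recursively until the constraint $\cC^\super{k}$ becomes vacuous, then lifting the resulting points from the coarsened spaces $I^\super{k}$ back to $I^\super{1}$ via multinomial splittings, and finally invoking Lemma \ref{lem:basicPoisson} to convert the conditional truncated Poisson laws into unconditional Poissons plus at most $n$ extra points at each level. One application of Lemma \ref{lemma:iteration2} to $\bP^\super{1}_{I^\super{1}}(\cdot \mid \cC^\super{1})$ produces a random point $y_1 = Y^\super{1,1}$, unconditioned Poissons on $(M^\super{1})^c$, a truncated Poisson multiplicity at $y_1$, zero mass on $I^\super{1}_{>y_1} \cap M^\super{1}$, and either unconditioned Poissons on $R^\super{1}$ or the operator $\Psi^\super{1}$ applied to $\bP^\super{2}_{I^\super{2}}(\cdot \mid \cC^\super{2})$ on the coarser space. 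Since each iteration strictly shrinks $O^\super{k}$ (because $y_k$ satisfies at least the constraint $\min(O^\super{k})$), the recursion terminates after a random number $\kappa \leq |O^\super{1}|$ of steps, producing the sequence $\bfy = (y_1, \dots, y_\kappa)$.

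Next, I would lift each coarsened point back to $I^\super{1}$. Given $\bfy$, the point $y_k \in I^\super{k}$ is lifted to $Y^\super{k,1} \in \opi_k^{-1}(y_k) \subset I^\super{1}$ with probabilities $\lambda^\super{1}_z / \lambda^\super{k}_{y_k}$, and the unconditioned Poissons at $(M^\super{k})^c$ for $k > 1$ are similarly split by the composition $\opi_k = \pi_k \circ \cdots \circ \pi_2$. Because the sum of independent Poissons is again Poisson, and because a multinomial split of a $\Poi(\lambda)$ variable with parameter vector $(\lambda_z/\lambda)_z$ yields independent $\Poi(\lambda_z)$ variables, these lifts reconstitute the independent Poissons on the corresponding preimages in $I^\super{1}$. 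Assembling the contributions yields the claimed equality in law: for each realization $\bfy$, weighted by $\bP^\super{1}_{I^\super{1}}(\bfy \mid \cC^\super{1})$, the point configuration on $I^\super{1}$ consists of the lifted $Y^\super{k,1}$ together with the lifted ambient Poissons, matching the mixture representation in the statement.

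Finally, the stochastic domination follows by inserting Lemma \ref{lem:basicPoisson} at each iteration step: the truncated factor $\bP_{y_k}^\super{k}(N^\super{k}_{y_k} \in \cdot \mid N^\super{k}_{y_k} \geq 1)$ is dominated by the law of $\Poi(\lambda^\super{k}_{y_k}) + n$ for a uniform constant $n = n(\lambda)$. Under the lift, the $n$ extra units at $y_k$ become $n$ independent lifts $Y^\super{1,k}_j \in \opi_k^{-1}(y_k)$, precisely the extra points appearing in the domination bound, while the ambient Poisson pieces at the successive levels reassemble into the unconditional field $\bP^\super{1}_{I^\super{1}}$ on all of $I^\super{1}$ (the trivial domination of zero by a Poisson handles the missing mass on $I^\super{k}_{>y_k} \cap M^\super{k}$). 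The principal obstacle will be the bookkeeping in the lifting step: one must verify, conditionally on $\bfy$, that the multinomial splittings at distinct levels combine coherently to produce a single unconditional Poisson law on $I^\super{1}$, and that the domination introduced by Lemma \ref{lem:basicPoisson} respects the independence structure, so that the $n$ extra lifts are genuinely independent of the ambient Poisson background given $\bfy$.
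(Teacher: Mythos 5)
Your proposal is correct and follows essentially the same route as the paper: iterate the decomposition of Lemma \ref{lemma:decomposition} via Corollary \ref{lemma:decompositionK} and Lemma \ref{lemma:iteration2} until $O^{\super{k}}=\emptyset$, lift the resulting points and ambient Poissons back to $I^{\super{1}}$ by multinomial sampling with weights $\lambda^{\super{1}}_z/\lambda^{\super{k}}_{y_k}$, and obtain the domination by inserting Lemma \ref{lem:basicPoisson} at each step as in Lemma \ref{lem:firstDomiGeneral}. The bookkeeping issue you flag (coherence of the multinomial splittings across levels and independence of the $n$ extra lifts given $\bfy$) is exactly the point the paper resolves in the discussion preceding the theorem.
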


\begin{proof}
The first decomposition and the following four properties of the random lifts $Y^\super1_k \in I^\super1$  follow from the discussion above. To see that the stochastic domination statement holds, we note that the argument provided in the proof Lemma \ref{lem:firstDomiGeneral} still applies. Indeed, at each iteration step, using Corollary  \ref{lemma:decompositionK}, we have that
$\prod_{x\in I^\super{k}_{>y_k}\cap M^\super{k}} \ind_{0\in \cdot }$ is trivially dominated by corresponding independent Poisson random variables. 
Similarly, utilising Lemma \ref{lem:basicPoisson}, the terms $\bP^\super{k}_{y_k}(N^\super{k}_{y_k} \in \cdot \;|\; N^\super{k}_{y_k} \geq 1)$ are stochastically dominated by $n+N^\super{k}_{y_k}$, where the $N^\super{k}_{y_k}$'s are independent $Poi(\lambda_{y_k}^\super{k})$ random variables. 
\end{proof}

\subsection{Proof of Theorem \ref{thm:domination2}}

In this last subsection, we turn back to the original motivation and the field of random walks-model and, equipped with the theory from the previous subsections, argue that  Theorem \ref{thm:domination2} indeed holds. 

As explained in Subsection \ref{sec:SDfRW}, since conditioning on $A$ does not affect $\xi^{\gamma^c}$, we can split that part of the field of and focus on the effect of the condition on $\xi^\gamma$. Further, by projecting the latter onto $I^\super{1}$ we can control the effect of conditioning on $A$ via the scheme outlined in the previous subsections. Particularly, writing $\bP^\super{1}_{I^\super 1}( \cdot \;|\; \cC^\super 1)$ for this projected law, by Theorem \ref{thm:domination2GeneralOlD} we have that 
\begin{equation}
\bP^\super{1}_{I^\super 1}( \cdot \;|\; \cC^\super 1)=\sum_{\bfy} \bP^\super{1}_{I^\super{1}}(\bfy \;|\; \cC^\super{1}) \sum_{k=1}^{\kappa}\cL(Y^\super{k}|\bfy).
\end{equation}
Moreover, at the level of the  $Y^\super{k}$'s, the independence and the stochastic domination properties described there holds too. The following lemma is used in order to lift the elements $Y^\super{k}$ to random walk paths. Before this, we first note that, since this will be done independently for each $Y^\super{k}$, the independence and the stochastic domination property of Theorem \ref{thm:domination2GeneralOlD} immediately transfers to these elements too, yielding that Properties \eqref{enum:independence} and \eqref{enum:domination} of Theorem \ref{thm:domination2} hold.

Now, given $\bfy$, for $k=1,\dots, |\bfy|$, let $D_k= D_k(\bfy)$ be the event that a random walk path $X$ do not intersect $\gamma$ for times in $V$ and obey the restrictions provided by the lifting mechanism of the corresponding element $y_k$. That is,  
\begin{equation}
D_k := \{\opi_k(\trace_{\gamma} (X))=y_k, \opi_{\ell}(\trace_\gamma(X)) <y_{\ell}, \ell=1,...,k-1, X_{-s}\neq \gamma_s \ \forall\;s\in V\}.
\end{equation}

\begin{lem}\label{lem:LastOne}
Let $Y_k^\super{1} \in I^\super1$ be the random lift of $y_k \in I^\super k$. Then 
\begin{align}
\bP(Y_k^\super{1}=x) = P^{RW}(X \in x \;|\;  D_k), \quad x \in  I^\super1.
\end{align}
\end{lem}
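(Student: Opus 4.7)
The statement is essentially a bookkeeping identity and the plan is to establish it by directly unfolding the construction of the random lift together with the definitions of the rates $\lambda^{\super{\ell}}$. To start, I would recall that, by construction, $Y_k^{\super{1}}$ is supported on the preimage $\opi_k^{-1}(y_k)\subset I^{\super{1}}$ and, given $\bfy$, takes each value $z\in\opi_k^{-1}(y_k)$ with probability $\lambda_z^{\super{1}}/\lambda_{y_k}^{\super{k}}$. For $x\notin\opi_k^{-1}(y_k)$ both sides of the claimed identity vanish: the left by definition, and the right because the event $\{X\in x\}$ then necessarily violates one of the constraints defining $D_k$. Hence it suffices to verify the identity for $x\in\opi_k^{-1}(y_k)$.

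Next, I would translate these rates into random walk probabilities. For any $z\in I^{\super{1}}$ with $z(i)=0$ for all $i\in V$, the defining formula for $\lambda_z$ from the first re-formulation gives
\[
  \lambda_z^{\super{1}} \;=\; \lambda \sum_{w\in\bZ^d} P^{RW}(X\in z \mid X_0=w) \;=:\; \lambda\,P^{RW}(X\in z),
\]
where I view $P^{RW}$ as the $\sigma$-finite measure on bi-infinite random walk paths obtained by summing over all starting positions. The higher-level rates are determined inductively via $\lambda_x^{\super{\ell+1}}=\sum_{z\in\pi_{\ell+1}^{-1}(x)}\lambda_z^{\super{\ell}}$, so iterating this telescoping identity produces
\[
  \lambda_{y_k}^{\super{k}} \;=\; \sum_{z\in\opi_k^{-1}(y_k)} \lambda_z^{\super{1}} \;=\; \lambda\,P^{RW}\bigl(X\in \opi_k^{-1}(y_k)\bigr).
\]

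Finally, I would verify that the path-space event $\bigcup_{z\in \opi_k^{-1}(y_k)}\{X\in z\}$ coincides with $D_k$; granting this, the conclusion is immediate from the two displays above:
\[
  \bP\bigl(Y_k^{\super{1}}=x\bigr) \;=\; \frac{\lambda_x^{\super{1}}}{\lambda_{y_k}^{\super{k}}} \;=\; \frac{P^{RW}(X\in x)}{P^{RW}(D_k)} \;=\; P^{RW}(X\in x\mid D_k).
\]

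The main obstacle, though not a deep one, is precisely this final identification. It amounts to carefully unpacking the iterated coarsening maps $\pi_\ell\colon R^{\super{\ell-1}}\to I^{\super{\ell}}$ together with their domains $R^{\super{\ell-1}}=I^{\super{\ell-1}}_{<y_{\ell-1}}\cap M^{\super{\ell-1}}$ and checking that $z\in\opi_k^{-1}(y_k)$ exactly when $\opi_\ell(z)$ is defined with $\opi_\ell(z)<y_\ell$ for every $\ell<k$ and $\opi_k(z)=y_k$, which, combined with the support condition $z(i)=0$ for $i\in V$ (automatic whenever $\lambda_z^{\super{1}}>0$), is exactly $D_k$. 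A minor side point is to pin down the $\sigma$-finite interpretation of $P^{RW}$ so that conditioning on $D_k$ actually yields a genuine probability measure; this is fine as soon as $y_k$ imposes at least one constraint of the form $X_{-s}=\gamma_{-s}$, which follows from $y_k\in C^{\super{k}}_{\min(O^{\super{k}})}$.
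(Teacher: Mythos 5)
Your proposal is correct and follows essentially the same route as the paper: express the lift probability as the ratio $\lambda_x^{\super1}/\lambda_{y_k}^{\super k}$, telescope the rates to $\lambda_{y_k}^{\super k}=\sum_{z\in\opi_k^{-1}(y_k)}\lambda_z^{\super1}$, and identify $\opi_k^{-1}(y_k)$ (together with the support condition on $V$) with $D_k$. The only cosmetic difference is that you keep $P^{RW}$ as a $\sigma$-finite measure and normalize at the end, whereas the paper first roots the walk by conditioning on $X_{-\min(O^{\super k})}=\gamma_{\min(O^{\super k})}$; both hinge on the same final verification --- that the intersection with $M^{\super\ell}$ in the domains of the $\pi_\ell$ is automatic for preimages of $y_k$ --- which the paper spells out and you correctly flag as the remaining step.
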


\begin{proof}
Firstly, we note that, for any $x\in I^\super1$ with $x(i)=0 \ \forall\; i\in V$ and $x(k)=1$ we have
\begin{align}\label{eq:lemma:rw-lambda}
\lambda_x^\super1 = \lambda \bP^{RW}(X \in x \;|\; X_{-k}=\gamma_k)
\end{align}
Indeed, it holds that 
\begin{align}
\lambda^\super1_x 
&= \lambda \sum_{z\in\bZ^d} \bP^{RW}(X \in x \;|\; X_0=z)   \\
&= \lambda \sum_{z\in\bZ^d} \bP^{RW}(X \in x \;|\; X_{-k}=\gamma_k, X_0=z ) \bP^{RW}(X_{-k}=\gamma_k \;|\; X_0=z )   \\
&= \lambda \sum_{z\in\bZ^d} \bP^{RW}(X \in x \;|\; X_{-k}=\gamma_k, X_0=z ) \bP^{RW}( X_0=z \;|\; X_{-k}=\gamma_k )   \\
&= \lambda \sum_{z\in\bZ^d} \bP^{RW}(X \in x, X_0=z \;|\; X_{-k}=\gamma_k ) = \lambda \bP^{RW}(X \in x \;|\; X_{-k}=\gamma_k ). 
\end{align}
By definition of the lift, $\bP(Y_k^\super{1}=x) = \lambda_x^\super1 / \lambda^\super{k}_{y_k}$, $x\in \opi_k^{-1}(y_k)$ (and 0 for other $x$). Since $x({\min(O^\super{k})})=1$ for all $x\in \opi_k^{-1}(y_k)$ we can apply \eqref{eq:lemma:rw-lambda} to get
\begin{align}
\bP(Y_k^\super{1}=x) 
&= \frac{\lambda P^{RW}(X \in x \;|\; X_{-\min(O^\super{k})}=\gamma_{\min(O^\super{k})})}{\lambda \sum_{z \in \opi_k^{-1}(y_k)}P^{RW}(X \in z \;|\; X_{-\min(O^\super{k})}=\gamma_{\min(O^\super{k})})\ind_{z(i)=0\;\forall i\in V}}     \\
&= \frac{P^{RW}(X \in x \;|\; X_{-\min(O^\super{k})}=\gamma_{\min(O^\super{k})})}{P^{RW}( X \in \opi_k^{-1}(y_k), X_{-s}\neq \gamma_s \ \forall\;s\in V \;|\; X_{-\min(O^\super{k})}=\gamma_{\min(O^\super{k})})}  \\
&= P^{RW}(X \in x \;|\; X \in \opi_k^{-1}(y_k), X_{-s}\neq \gamma_s \ \forall\;s\in V, X_{-\min(O^\super{k})}=\gamma_{\min(O^\super{k})}),
\end{align}
where we have used that the event $\{X \in x\}$ with $x \in \opi_k^{-1}(y_k)$ is contained in $\{X \in \opi_k^{-1}(y_k), X_{-s}\neq \gamma_s \ \forall\;s\in V\}$.
Particularly, we have that 
\begin{align}
\opi_k^{-1}(y_k) = \left\{ x\in I^\super1 : \opi_\ell(x) \in I^\super{\ell}_{<y_{\ell}} \cap M^\super\ell, 1\leq \ell \leq k-1, \opi_k(x)=y_k \right\},
\end{align}
where the domains $R^\super{\ell} = \pi_{\ell+1}^{-1}(I^\super\ell) = I^\super{\ell}_{<y_{\ell}} \cap M^\super\ell$ enter because of the definition of $\opi_k$. Since $y_k$ satisfies at least the constraint with index $\min(O^\super{k})$ (since $y_k(\min(O^\super{k}))=1$), and level $k$ is the first level to do so, this in turn implies that $\pi_{\ell+1}^{-1}\circ...\circ \pi_k^{-1}(y_k) \in C_{\min(O^\super{k})}^{\ell}$ for all $1\leq \ell \leq k-1$. Hence all preimages of $y_k$ on level $\ell$ lie in $M^\super\ell$, $1\leq \ell \leq k-1$, and we can drop the intersection with $M^\super\ell$, which gives us
\begin{align}
\opi_k^{-1}(y_k) = \left\{ x\in I^\super1 : \opi_\ell(x)<y_\ell, 1\leq \ell \leq k-1, \opi_k(x)=y_k \right\},
\end{align}
which completes the proof.
\end{proof}

\begin{proof}[Proof of Theorem \ref{thm:domination2}, Property \eqref{enum:rw}]

Given $\bfy$, independently, we sample $Z^\super{k}$ in $\cS$ as follows. First, we sample $Y^\super{k}$ yielding an element $x \in I^\super{1}$. By Lemma \ref{lem:LastOne}, this agrees with the probabilities $P^{RW}(X \in x \;|\;  D_k)$. Given $x \in I^\super{1}$, we then sample  $Z^\super{k}$ in $\cS$ using the law $P^\super{RW}( \cdot | D_k, \{Z^\super{k} \in  x\})$. By the Markov property of random walk paths, the law of $Z^\super{k}$ is thus exactly $P^\super{RW}(\cdot | D_k)$.

By this construction, $D_k$ only concerns the behaviour of $Z^\super{k}$ in the time interval $[-|\gamma|,-1]$. Particular, given $Z^\super{k}_{-1}= z \in \bZ^d$, it evolves as a simple random walks for all future times. Moreover, since the conditioning on $D_k$ guarantees that $Z^\super{k}_{-\min(O^\super{k})} = \gamma_{-\min(O^\super{k}}$, letting $z_k= -\min(O^\super{k})$ provides us with the corresponding anchor point.

 Lastly,  the claim that  $D_i$ satisfies $\{Z^\super{i}_{z_i}=\gamma_{z_i}\}\cap\{ Z^\super{i}_s \neq \gamma_s, z_i<s\leq -1 \} \subset D_i \subset \{Z^\super{i}_{z_i}=\gamma_{z_i}\}$ and is a deterministic function of $\bfy$ is then an immediate consequence of its definition. 
\end{proof}

\begin{proof}[Proof of Theorem \ref{thm:domination2}, Property \eqref{enum:bounds}]
Firstly, we note that $D_k$ contains the event that  $Z^\super{k}$ hits $\gamma$ only at the time point provided by the anchor point $z_k$. Assuming that $d\geq 3$, it thus follows by Lemma \ref{lem:SRWestimates}
that there is $\delta>0$ not depending on $\gamma$ nor $A$ such that $P^{RW}(D_k)>\delta$. Moreover, this implies that 
\begin{align}
\bQ_A(Z^{i}_t = x \;|\; \bfy) = & P^\super{RW} (  Z^{i}_t = x \;|\; D_k) 
\\ = &P^\super{RW} (  Z^\super{i}_t = x , Z^\super{i}_t =x, Z^\super{i}_{z_i} =\gamma_{z_i},  \;|\; D_k)
\\  \leq  & \delta^{-1}  P^\super{RW} (  Z^\super{i}_t = x , Z^\super{i}_t =x, Z^\super{i}_{z_i} =\gamma_{z_i})
\\ \leq & \delta^{-1} C(t-z_i)^{-\frac d2 }.
\end{align}
where we in the last inequality again apply Lemma \ref{lem:SRWestimates}, and from which we conclude that 
 \eqref{enum:bound} holds.
\end{proof}

\end{document}